\renewcommand{\mathcal}{\mathscr}
\DeclareMathSymbol{A}{\mathalpha}{operators}{`A}%
\DeclareMathSymbol{B}{\mathalpha}{operators}{`B}%
\DeclareMathSymbol{C}{\mathalpha}{operators}{`C}%
\DeclareMathSymbol{D}{\mathalpha}{operators}{`D}%
\DeclareMathSymbol{E}{\mathalpha}{operators}{`E}%
\DeclareMathSymbol{F}{\mathalpha}{operators}{`F}%
\DeclareMathSymbol{G}{\mathalpha}{operators}{`G}%
\DeclareMathSymbol{H}{\mathalpha}{operators}{`H}%
\DeclareMathSymbol{I}{\mathalpha}{operators}{`I}%
\DeclareMathSymbol{J}{\mathalpha}{operators}{`J}%
\DeclareMathSymbol{K}{\mathalpha}{operators}{`K}%
\DeclareMathSymbol{L}{\mathalpha}{operators}{`L}%
\DeclareMathSymbol{M}{\mathalpha}{operators}{`M}%
\DeclareMathSymbol{N}{\mathalpha}{operators}{`N}%
\DeclareMathSymbol{O}{\mathalpha}{operators}{`O}%
\DeclareMathSymbol{P}{\mathalpha}{operators}{`P}%
\DeclareMathSymbol{Q}{\mathalpha}{operators}{`Q}%
\DeclareMathSymbol{R}{\mathalpha}{operators}{`R}%
\DeclareMathSymbol{S}{\mathalpha}{operators}{`S}%
\DeclareMathSymbol{T}{\mathalpha}{operators}{`T}%
\DeclareMathSymbol{U}{\mathalpha}{operators}{`U}%
\DeclareMathSymbol{V}{\mathalpha}{operators}{`V}%
\DeclareMathSymbol{W}{\mathalpha}{operators}{`W}%
\DeclareMathSymbol{X}{\mathalpha}{operators}{`X}%
\DeclareMathSymbol{Y}{\mathalpha}{operators}{`Y}%
\DeclareMathSymbol{Z}{\mathalpha}{operators}{`Z}%
\def\setminus{\mathchoice
    {\mathbin{\vrule height .72ex width 1.61ex depth -.38ex}}
    {\mathbin{\vrule height .72ex width 1.61ex depth -.38ex}}
    {\mathbin{\vrule height .50ex width 0.85ex depth -.28ex}}
    {\mathbin{\vrule height .20ex width 0.570ex depth -.24ex}}
}
\renewcommand{\leq}{\leqslant}
\renewcommand{\geq}{\geqslant}
\numberwithin{equation}{section}
\newcommand{\uple}[1]{\text{\boldmath${#1}$}}
\newcommand{\bfS}{\mathbf{S}}
\newcommand{\Cc}{\mathbf{C}}
\newcommand{\Aa}{\mathbf{A}}
\newcommand{\Zz}{\mathbf{Z}}
\newcommand{\Rr}{\mathbf{R}}
\newcommand{\Qq}{\mathbf{Q}}
\newcommand{\Ff}{\mathbf{F}}
\newcommand{\bQl}{\overline{\Qq}_{\ell}}
\newcommand{\proba}{\mathbf{P}}
\newcommand{\expect}{\mathbf{E}}
\newcommand{\normt}[1]{\|{#1}\|_{\mathrm{t}}}
\newcommand{\normf}[1]{\|{#1}\|_{\mathrm{tf}}}
\DeclareMathOperator{\frob}{Fr}
\newcommand{\injecte}{\hookrightarrow}
\DeclareMathOperator{\Spec}{Spec}
\DeclareMathOperator{\Kl}{\mathrm{Kl}}
\DeclareMathOperator{\Tr}{Tr}
\DeclareMathOperator{\ft}{FT}
\DeclareMathOperator{\cond}{\mathbf{c}}
\DeclareMathOperator{\dual}{D}
\newcommand{\eps}{\varepsilon}
\renewcommand{\rho}{\varrho}
\DeclareMathOperator{\SL}{SL}
\newcommand{\sheaf}[1]{\mathcal{{#1}}}
\DeclareMathSymbol{\gena}{\mathord}{letters}{"3C}
\DeclareMathSymbol{\genb}{\mathord}{letters}{"3E}
\theoremstyle{plain}
\newtheorem{theorem}{Theorem}[section]
\newtheorem*{theorem*}{Theorem}
\newtheorem{lemma}[theorem]{Lemma}
\newtheorem{corollary}[theorem]{Corollary}
\newtheorem{proposition}[theorem]{Proposition}
\theoremstyle{remark}
\theoremstyle{definition}
\newtheorem{definition}[theorem]{Definition}
\newtheorem{example}[theorem]{Example}
\newtheorem{remark}[theorem]{Remark}
\newcommand{\mcF}{\mathcal{F}}
\newcommand{\sfP}{\mathsf{P}}
\renewcommand{\geq}{\geqslant}
\renewcommand{\leq}{\leqslant}
\newcommand\sumsum{\mathop{\sum\sum}\limits}
\begin{document}

\title{Unmotivated ergodic averages}

\author{Emmanuel Kowalski}
\address{ETH Z\"urich -- D-MATH\\
  R\"amistrasse 101\\
  CH-8092 Z\"urich\\
  Switzerland} \email{kowalski@math.ethz.ch}

\begin{abstract}
  We consider weighted ergodic averages indexed by primes, where the
  weight depends on the prime, and is a ``trace function'' coming from
  algebraic geometry. We obtain extensions of classical results, in
  both $L^2$ and topological settings, and raise some further
  problems.
\end{abstract}

\subjclass[2010]{11T23, 11L05, 11N37, 11N75, 11F66, 14F20, 14D05}

\keywords{Riemann Hypothesis over finite fields, ergodic averages,
  ergodic theorems, maximal inequality, conductor of a sheaf, Fourier
  sheaf}

\maketitle 

\bigskip
\bigskip

\setcounter{tocdepth}{1}
\tableofcontents

\section{Introduction}

We consider a dynamical system $(X,\mu, f)$. Thus, $(X,\mu)$ is a
probability space and $f\colon X\to X$ is a measurable map such that
$f_*\mu=\mu$.

In this paper, motivated largely by simple curiosity (though see also
Remark~\ref{rm-motivation} for some arithmetical motivation), we
consider weighted ergodic averages of \emph{triangular} form\footnote{\
  In the sense of the ``triangular arrays'' of probability theory, e.g.,
  in the Central Limit Theorem.}, namely averages
\begin{equation}\label{eq-motiv}
  \frac{1}{p}\sum_{0\leq n<p} t_p(n)\ (\varphi\circ f^n),
\end{equation}
for some fixed function $\varphi\colon X\to \Cc$, where $p$ is a prime
and $t_p$ is a function on $\Zz$ (depending on $p$) ``of algebraic
origin''. Precisely, we are interested in the limit of such averages as
$p\to +\infty$ when the functions $t_p$ are \emph{trace function} modulo
$p$ or short linear combinations of such functions.


Since the general theory of trace functions (as amplified
by Fouvry, Kowalski and Michel in particular) is probably not
well-known to most readers, we present right away three basic
examples that will indicate the flavor of these averages.

\begin{example}\label{ex-trace-easy}
  (1) The function $t_p(n)$ which is the characteristic function of the
  set of squares modulo $p$ (quadratic residues) is essentially a linear
  combination
  $$
  \frac{1}{2}\Bigl(1+\Bigl(\frac{n}{p}\Bigr)\Bigr)
  $$
  of two trace functions. Thus the average~(\ref{eq-motiv}) is then
  essentially the ergodic average where $n$ is restricted to be a square
  modulo~$p$. The ``triangularity'' is very obvious here: as the prime
  $p$ changes, the set of quadratic residues modulo $p$ changes also.
  \par
  (2) Let $q\in \Zz[X]$ be a fixed monic polynomial. Then
  $t_p(n)=e(q(n)/p)$ is a trace function, where $e(z)=\exp(2i\pi z)$
  for any complex number $z$.
  \par
  (3) Define $t_p(n)=\Kl_2(n;p)$ where
  $$
  \Kl_2(n;p)=\frac{1}{\sqrt{p}}\sum_{1\leq x\leq p-1}
  e\Bigl(\frac{nx+\bar{x}}{p}\Bigr),
  $$
  where $\bar{x}$ is the inverse of $x$ modulo $p$. These are the
  classical \emph{Kloosterman sums}, which are of paramount importance
  in analytic number theory. The function $t_p$ is then also a trace
  function.
\end{example}

More generally, we will explain below the definition of two norms
$\normt{\cdot}\leq \normf{\cdot}$ on the space $\mathcal{C}(\Ff_p)$ of
complex-valued functions on~$\Ff_p=\Zz/p\Zz$, which we identify with the
interval $\{0,\ldots, p-1\}$. For $f\colon \Ff_p\to \Cc$, these norms
measure the complexity of a decomposition of~$f$ into sums of certain
trace functions. In the three examples above, we have
$\normf{t_p}\leq c$, where $c$ is independent of~$p$ (but depends on the
degree of~$q$ in the case of Example (2)), and similarly
$\normt{t_p}\leq c'$ for some constant $c'$, except in the case of
polynomials~$q$ of degree~$1$ in Example (2).

Then, exploiting the remarkable fundamental $L^2$ properties of trace
functions (which are very deep, as they rely on Deligne's most general
version of the Riemann Hypothesis over finite fields~\cite{deligne}),
we will prove rather easily the following result.

\begin{theorem}[$L^2$-ergodic theorems]\label{th-main}
  Let $(t_p)_p$ be a sequence of functions $t_p\colon\Ff_p\to\Cc$,
  indexed by an infinite subset~$\sfP$ of the primes. Let $(X,\mu,f)$ be
  a dynamical system and let
  $$
  \pi\colon L^1(X,\mu)\to L^1(X,\mu)
  $$
  be the projection given by the ergodic theorem
  \textup{(see~\cite[Th.\,2.30]{einsiedler-ward})}. Assume that there
  exists a constant $c\geq 0$ such that either
  \begin{enumerate}
  \item[(a)] We have $\normf{t_p}\leq c$ for $p\in\sfP$,
  \item[(b)] The system $(X,\mu,f)$ is weakly-mixing and
    $\normt{t_p}\leq c$ for $p\in\sfP$.
  \end{enumerate}
  \par
  Let $\varphi\in L^2(X,\mu)$.  Then the following results hold:
  \par
  \emph{(1)} We have
  $$
  \frac{1}{p}\sum_{0\leq n<p}t_p(n) \, \varphi\circ f^n-
  \Bigl(\frac{1}{p}\sum_{0\leq n<p}t_p(n)\Bigr)\pi(f)\to 0
  $$
  in $L^2(X,\mu)$ as $p\to +\infty$ along $\sfP$. Moreover, the
  convergence is uniform for~$\varphi$ in compact sets of $L^2(X,\mu)$.
  \par
  \emph{(2)} Suppose that 
  \begin{equation}\label{eq-sparse}
    \sum_{p\in\mathsf{P}}\frac{(\log p)^2}{p}<+\infty.
  \end{equation}
  
  Then for $\mu$-almost all $x$, we have
  $$
  \frac{1}{p}\sum_{0\leq n<p}t_p(n) \, \varphi(f^n(x))-
  \Bigl(\frac{1}{p}\sum_{0\leq n<p}t_p(n)\Bigr)\pi(f)(x)\to 0
  $$
  as $p\to +\infty$ along $\sfP$.
\end{theorem}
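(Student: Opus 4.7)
The plan is to interpret both parts through the Koopman operator $U_f\colon \varphi\mapsto \varphi\circ f$ (a unitary on $L^2(X,\mu)$) and reduce everything to a pointwise estimate on the incomplete character sum
$$
S_p(z) \;:=\; \tfrac{1}{p}\sum_{0\le n<p} t_p(n)\, z^n, \qquad z\in S^1.
$$
Decomposing $\varphi=\pi(\varphi)+\varphi_0$ with $\varphi_0$ orthogonal to $\ker(U_f-\mathrm{Id})$, the term $(\tfrac{1}{p}\sum t_p(n))\pi(\varphi)$ subtracted in the theorem precisely cancels the invariant part $\tfrac{1}{p}\sum t_p(n)\, U_f^n\pi(\varphi)$, leaving the quantity $\tfrac{1}{p}\sum t_p(n)\, U_f^n\varphi_0$ to estimate. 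By the spectral theorem there is a finite positive Borel measure $\mu_{\varphi_0}$ on $S^1$ with $\mu_{\varphi_0}(\{1\})=0$ such that
$$
\Bigl\|\tfrac{1}{p}\sum_{n<p} t_p(n)\, U_f^n \varphi_0\Bigr\|_2^2 \;=\; \int_{S^1} |S_p(z)|^2\, d\mu_{\varphi_0}(z),
$$
which reduces everything to estimating $S_p(z)$ on $S^1$.

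For case (a) I would establish the uniform bound $|S_p(z)|\ll (\log p)/\sqrt p$ by Fourier completion. Using the identity
$$
z^n \;=\; \frac{1-z^p}{p}\sum_{a\bmod p}\frac{e(an/p)}{1-z\,e(-a/p)} \qquad (0\le n<p)
$$
(valid whenever the denominators are nonzero) and interchanging summations, one obtains
$$
S_p(z) \;=\; \frac{1-z^p}{p^2}\sum_{a\bmod p}\frac{\widetilde t_p(a)}{1-z\,e(-a/p)},\qquad \widetilde t_p(a) := \sum_n t_p(n)\, e(an/p).
$$
The hypothesis $\normf{t_p}\le c$ is precisely what yields, via Deligne's Riemann Hypothesis for the Fourier transform sheaf of $t_p$, the uniform Weil-type bound $|\widetilde t_p(a)|\ll\sqrt p$; since $|1-z^p|\le 2$ and $\sum_a |1-z\,e(-a/p)|^{-1}\ll p\log p$, the claimed bound on $S_p(z)$ follows. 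Plugging back into the spectral identity yields the quantitative $L^2$ estimate $\ll (\log p)^2 / p \cdot \|\varphi\|_2^2$, which proves part~(1)(a) with the stated uniformity on $L^2$-compact sets, and simultaneously prepares part~(2).

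For case (b) the $\normt{}$ hypothesis does not control $\widetilde t_p(a)$ uniformly in $a$ (as seen in Example~(2) with a linear $q$), so the preceding argument breaks down; instead weak mixing is exactly the statement that $\mu_{\varphi_0}$ is a continuous measure on $S^1$, which combined with the trivial uniform bound $|S_p(z)|\ll 1$ and pointwise vanishing $S_p(z)\to 0$ for each fixed $z\ne 1$ lets one conclude by dominated convergence. For part~(2), the quantitative bound from case~(a) combined with the sparseness hypothesis $\sum_{p\in\sfP}(\log p)^2/p<\infty$ gives a finite sum of squared $L^2$-norms, whence a Tonelli argument yields $\mu$-almost-everywhere pointwise convergence. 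The principal obstacle is making case~(b) quantitative for part~(2): the peaks of $|S_p(z)|^2$ coming from individual character twists have width $O(1/p)$, and one must exploit the modulus of continuity of $\mu_{\varphi_0}$ (guaranteed by weak mixing) to show that the $\mu_{\varphi_0}$-mass of such shrinking balls is summable under the hypothesis $\sum (\log p)^2/p<\infty$.
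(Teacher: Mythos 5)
Your case (a) is essentially the paper's own argument: the spectral theorem for the Koopman operator reduces both parts to the uniform bound $\sup_{\theta}\bigl|\sum_{0\leq n<p}t_p(n)e(n\theta)\bigr|\ll \sqrt{p}\log p$ for Fourier sheaves (the paper's Corollary~\ref{cor-ft}, proved by exactly your completion identity), and part (2) then follows from sparseness by the Tonelli/Borel--Cantelli trick. One small imprecision: your estimate $\sum_a|1-ze(-a/p)|^{-1}\ll p\log p$ is false as stated, since for the frequency $a_0$ closest to $z$ the summand can be arbitrarily large; you must treat that single term separately via $|(1-z^p)/(1-ze(-a_0/p))|=|1+w+\cdots+w^{p-1}|\leq p$, as the paper does in the proof of Proposition~\ref{pr-weyl}.

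Case (b) contains a genuine gap. The claim that $S_p(z)\to 0$ for each \emph{fixed} $z=e(\theta)\neq 1$ is false under the hypothesis $\normt{t_p}\leq c$: the functions $t_p(n)=e(a_pn/p)$ are trace functions of Artin--Schreier sheaves of bounded conductor, and choosing $a_p$ to be the nearest integer to $-p\theta$ gives $|\theta+a_p/p|\leq 1/(2p)$, whence $|S_p(e(\theta))|\geq 2/\pi$ for \emph{every} $p$ (this is precisely the mechanism of the irrational-rotation counterexample in Section~\ref{sec-examples}). So dominated convergence cannot be applied as written, and your proof of part (1)(b) does not close. The correct mechanism — which is close to your stated intention but is not what you wrote — is that $|S_p|^2$ for such a component is a normalized Fej\'er kernel $\tfrac1pF_p(\theta+a_p/p)$, a unit-height bump of width $O(1/p)$ centred at a \emph{moving} point; one passes to subsequences along which $-a_p/p$ converges to some $\theta_0$ and shows that a nonvanishing limit of $\int|S_p|^2\,d\mu_{\varphi_0}$ would force $\theta_0$ to be an atom of $\mu_{\varphi_0}$, contradicting weak mixing (this is the paper's Proposition~\ref{pr-as}). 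For part (2)(b), the obstacle you flag at the end is fatal to your route: weak mixing gives continuity of $\mu_{\varphi_0}$ but no modulus of continuity whatsoever, so $\sum_{p\in\sfP}\mu_{\varphi_0}(B(-a_p/p,1/p))$ need not converge even for sparse $\sfP$. The paper avoids this entirely by invoking Bourgain's uniform Wiener--Wintner theorem: for weakly mixing systems and almost every $x$, $\sup_{\theta}\bigl|\tfrac1p\sum_{0\leq n<p}e(n\theta)\varphi(f^n(x))\bigr|\to 0$, which disposes of the Artin--Schreier components pointwise along the full sequence of primes, sparseness being needed only for the Fourier components. You would need either that input or some substitute for it; the spectral measure alone does not suffice for the pointwise statement.
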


In addition, we consider the analogue of Sarnak's Möbius randomness
conjecture~\cite{sarnak} (one of the recent focus points at the
intersection of analytic number theory and ergodic theory) for our
weighted averages. We can prove a version of this conjecture for
certain specific families of trace functions, but since their definition
is non-trivial, we only state here some representative examples.

\begin{theorem}[Topological ergodic theorems]\label{th-sarnak}
  Let $X$ be a compact topological space and $f\colon X\to X$ a
  continuous map.

  Assume that the topological entropy of $f$ is zero.  Then for all
  continuous functions $\varphi\colon X\to \Cc$ and all $x\in X$, we
  have
  \begin{gather*}
    \lim_{p\to+\infty} \frac{1}{p}\sum_{0\leq n<p}
    \Kl_2(n;p)\varphi(f^n(x))=0,
    \\
    \lim_{p\to+\infty} \frac{1}{p}\sum_{0\leq n<p}
    \Bigl(\frac{n}{p}\Bigr)\varphi(f^n(x))=0.
  \end{gather*}
\end{theorem}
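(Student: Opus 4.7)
The plan is to apply a Bourgain--Sarnak--Ziegler (BSZ) orthogonality criterion, adapted to the triangular setting where the weights depend on the prime $p$, and then to verify the required bilinear correlation estimates via Deligne's Riemann Hypothesis over finite fields together with the geometric analysis of the underlying $\ell$-adic sheaves.

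The first step is to invoke (or establish) a triangular version of the BSZ criterion: for a uniformly bounded family $t_p\colon\Ff_p\to\Cc$, if for suitable pairs of distinct primes $\ell_1\neq\ell_2$ one has
\[\frac{1}{p}\sum_{n<p/\max(\ell_1,\ell_2)} t_p(\ell_1 n)\overline{t_p(\ell_2 n)}=o(1)\quad\text{as}\ p\to\infty\ \text{and then}\ \ell_1,\ell_2\to\infty,\]
then the conclusion of Theorem~\ref{th-sarnak} holds for the weights $(t_p)$. The proof of such a criterion follows the classical BSZ argument: after Cauchy--Schwarz in the outer variable, one is reduced to controlling correlations of the form $\tfrac{1}{p}\sum_n\varphi(f^{\ell_1 n}(x))\overline{\varphi(f^{\ell_2 n}(x))}$ on the dynamical side, and the subexponential growth of Bowen $\varepsilon$-balls in a zero-entropy system lets one bound the contribution of the ``diagonal'' pairs where these dynamical correlations are large by letting $\ell_1,\ell_2$ range over a family of primes whose cardinality eventually dominates the pigeonhole loss.

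The second step verifies the bilinear hypothesis for the two specific families. For $t_p(n)=\Kl_2(n;p)$, the correlation sum is the sum over $\Ff_p$ of the trace function of the tensor product sheaf $[\times\ell_1]^*\KL_2\otimes([\times\ell_2]^*\KL_2)^{\vee}$, where $\KL_2$ denotes the Kloosterman sheaf. Katz's determination of the monodromy of $\KL_2$ as full $\SL_2$ implies that this tensor sheaf has no trivial geometric constituent whenever $\ell_1\not\equiv\pm\ell_2\pmod p$; for distinct primes $\ell_1,\ell_2$ and $p$ large this is automatic, and Deligne's Riemann Hypothesis then yields a bound $O(p^{-1/2})$. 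For the Legendre symbol, however, $\chi_p(\ell_1 n)\chi_p(\ell_2 n)=\chi_p(\ell_1\ell_2)$ for all $n\not\equiv 0\pmod p$, so the multiplicative correlation has size $\approx 1$ and the multiplicative form of BSZ fails outright. One then falls back on an additive-shift variant of the criterion, for which the Weil bound supplies
\[\frac{1}{p}\sum_{n<p}\chi_p(n+h_1)\chi_p(n+h_2)=\frac{1}{p}\sum_{n}\Bigl(\frac{(n+h_1)(n+h_2)}{p}\Bigr)=O(p^{-1/2})\]
for distinct $h_1\neq h_2$, since $(n+h_1)(n+h_2)$ is a non-square polynomial in $n$.

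The main obstacle I foresee is the mismatch between the multiplicative BSZ, natural for Kloosterman-type weights, and the additive-shift variant that the Legendre symbol forces upon us. Reconciling them requires either a unified orthogonality criterion admitting both kinds of shifts---whose dynamical input is more involved, typically iterated van der Corput combined with a structure theorem for zero-entropy topological systems, controlling the complexity of ``arithmetic'' factors---or a separate, classical treatment of the Legendre case via completion of sums using Gauss sums, which reduces the question to a Wiener--Wintner-type estimate for exponentials $e(\alpha n)$ tested against zero-entropy orbits. I would expect this dynamical reduction, rather than the trace-function estimates themselves (which are immediate from Deligne once the geometric monodromy is known), to be the subtler part of the proof.
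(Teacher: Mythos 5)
Your strategy founders at its first step: there is no ``triangular BSZ criterion'' of the shape you invoke, and the classical argument cannot be adapted to produce one. In the Bourgain--Sarnak--Ziegler (K\'atai) argument, after writing $n=\ell m$ and applying Cauchy--Schwarz one faces the bilinear sums
$$
\sum_{m} t_p(\ell_1 m)\overline{t_p(\ell_2 m)}\,
\varphi(f^{\ell_1 m}(x))\overline{\varphi(f^{\ell_2 m}(x))},
$$
and the decoupling of the two factors is achieved \emph{only} because one of the two sequences is multiplicative, so that $\nu(\ell m)=\nu(\ell)\nu(m)$ can be pulled out before squaring. Here neither $t_p$ nor $n\mapsto\varphi(f^n(x))$ is multiplicative, so the reduction you describe --- smallness of $\tfrac1p\sum_n t_p(\ell_1 n)\overline{t_p(\ell_2 n)}$ on one side, of $\tfrac1p\sum_n \varphi(f^{\ell_1 n}(x))\overline{\varphi(f^{\ell_2 n}(x))}$ on the other --- is not available. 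Worse, the dynamical bilinear condition is \emph{not} a consequence of zero topological entropy (Bowen balls control the complexity of additive windows of the orbit, not dilation correlations); if it were, Sarnak's conjecture would follow in full generality from the known unconditional bilinear estimates for the M\"obius function, whereas it is open precisely because that implication fails. Your own observation that the multiplicative correlations of the Legendre symbol are identically of size $1$ is a second, independent sign that the dilation-based route is wrong here. Finally, the Gauss-sum completion fallback reduces the Legendre case to showing $\sum_a \bigl|\tfrac1p\sum_n e(an/p)\varphi(f^n(x))\bigr| = o(\sqrt{p}\,)$, and Parseval plus Cauchy--Schwarz gives exactly $O(\sqrt p)$ with nothing to spare, so this route also demands an input beyond zero entropy.

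The paper instead runs the ``Chowla implies Sarnak'' argument with \emph{additive} shifts, which is where your second fallback was pointing but which needs a concrete combinatorial mechanism rather than a structure theorem. One first proves (Proposition~\ref{pr-zero}) that for any fixed bounded coefficients $(\alpha_i)$ and any $m$, the density of $n\in\Ff_p$ with $\bigl|\sum_{0\leq i<m}t_p(n+i)\alpha_i\bigr|\geq\eps m$ is at most $C\exp(-\eps^2m/C)$ up to an error $O(\eps^{-\eps^2 m}p^{-1/2})$; this follows from a $2k$-th moment computation in which the off-diagonal correlation sums $\sum_n t_p(n+a_1)\cdots\overline{t_p(n+b_k)}$ exhibit square-root cancellation by the Riemann Hypothesis, the number of diagonal tuples being controlled by the ``positive monodromy-entropy'' condition of Definition~\ref{def-monodromy-entropy} --- verified for Kloosterman sheaves via the sums-of-products results of~\cite{sop} and for Kummer sheaves directly (this is where your Weil-bound computation for $\chi_p((n+h_1)(n+h_2))$ reappears, but for $2k$-fold products). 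Zero entropy is then used only through the fact that the $\eps$-discretized orbit sequence takes $e^{o(m)}$ distinct values on windows of length $m$, so a union bound over these windows beats the exponential concentration. This treats the Kloosterman and Legendre cases uniformly and requires no bilinear information about the dynamics.
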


\begin{remark}
  (1) Sequences of the form $(\varphi(f^n(x)))_n$, where $f$ has
  topological entropy $0$ and $\varphi$ is continuous are called
  \emph{deterministic}.  Hence, the result shows that there is no
  deterministic sequence which can correlate non-trivially with an
  infinite sequence of Kloosterman sums, or Legendre symbols, modulo
  primes.

  (2) We will show that these families may be replaced by a fairly
  wide class of trace functions, but not all.

  (3) See~\cite{bowen} for Bowen's definition of topological entropy,
  which applies to uniformly continuous maps between metric spaces,
  and~\cite{akm} for the definition of Adler, Konheim and McAndrew which
  applies to arbitrary compact spaces. It is known that these are equal
  (when both are defined), see,
  e.g.,~\cite[Satz\,4.8]{einsiedler-schmidt}. Theorem~\ref{th-sarnak}
  seems likely to also hold for locally compact metric spaces and
  bounded uniformly continuous functions, but we haven't checked this
  (although it is a natural framework, e.g. for unipotent flows).

  (4) The special case of this theorem concerning Kloosterman sums was
  proved independently by El Abdalaoui, Shparlinski and
  Steiner~\cite[Th.\,2.8]{ess}.
\end{remark}

\begin{remark}\label{rm-motivation}
  From the arithmetic point of view, it is a crucial fact that
  \emph{there is no systematic rule to construct or constrain the
    sequences of trace functions that are used in the averages for each
    prime}.  We think that the sequence of Kloosterman sums or Legendre
  symbols are natural, but the only constraint that we impose in
  Theorem~\ref{th-main} is the boundedness of the trace norms of the
  functions (as in much previous work). We will see that the situation
  is very unclear when the system $(X,\mu,f)$ is not weakly-mixing.

  In general, the search for natural stronger conditions that ``bind'' a
  sequence $(t_p)$ of trace functions is, for the author, a very natural
  arithmetic motivation for the study of our weighted ergodic
  averages. In other words: is there a natural ``coherence'' condition
  for trace functions modulo primes that would naturally distinguish
  examples like Kloosterman sums or Legendre symbols?
\end{remark}

\subsection*{Outline of the paper}

We present some concrete ``incarnations'' of the results in
Section~\ref{sec-examples}. Then Section~\ref{sec-trace} gives the
definitions and basic background results concerning trace functions,
including defining the ``trace norms'' $\normt{\cdot}$ and
$\normf{\cdot}$.  Sections~\ref{sec-mean} and~\ref{sec-mean-general}
prove the mean ergodic theorem, and Section~\ref{sec-sarnak} discusses
the topological case. We then conclude with a discussion section
(including an easy maximal inequality in $L^2$), and with some further
questions that may be of interest in probing further the links between
these two subjects.

\subsection*{Notation} For basic references concerning ergodic
theory, we will refer to the books of Einsiedler and
Ward~\cite{einsiedler-ward} and of Einsiedler and
Schmidt~\cite{einsiedler-schmidt} (e.g., for topological entropy, which
is not discussed in~\cite{einsiedler-ward}).

We will summarize in Section~\ref{sec-trace} the key facts concerning
trace functions. More details and examples can be found for instance in
the surveys~\cite{pisa, aws} of Fouvry, Kowalski, Michel and Sawin.

We will say that an infinite set $\sfP$ of primes that
satisfies~(\ref{eq-sparse}) is \emph{sparse}.  In order that $\sfP$ be
sparse, it is enough that there exists~$\delta>0$ such that the counting
function
$$
\pi(x;\sfP)=\sum_{\substack{p\leq x\\p\in\sfP}}1
$$
satisfies
$$
\pi(x;\sfP)\ll \frac{x}{(\log x)^{3+\delta}}.
$$

\subsection*{Remark on the text}
  The first draft of these notes was written in 2018/2019. At that time,
  I put them aside: the absence of applications diminished the interest
  of the questions, and moreover the results did not seem strong enough
  (or the proofs conceptually interesting enough) to compensate this
  fault.

  I came back to the text in 2023, first because the appearance of the
  preprint~\cite{ess} of El Abdalaoui, Shparlinski and Steiner showed
  that at least a few other mathematicians did consider similar
  questions, and then because I decided to talk about this at least
  once, in the Number Theory Seminar of the University of Turku (where I
  was present to be the opponent in the PhD defense of
  O. Järvienemi). Although the defects discussed above still
  apply,\footnote{\ In addition to the fact that there might be lurking
    mistakes and imprecisions, and that there are significant
    redundancies in certain arguments.} there is (I think) one interesting
  outcome from working on this topic, namely the diophantine
  approximation result of Lemma~\ref{lm-single}, which was actually
  stated without proof in the 2019 draft.

\subsection*{Acknowledgements} Thanks to M. Einsiedler for discussions
about ergodic theory, and to L. Pierce for discussions concerning
maximal theorems.  Thanks to K. Matomäki for the invitation to be the
opponent of O. Järvienemi, which provided me with the occasion to revise
these notes, and Y. Bugeaud for remarks and references concerning
Lemma~\ref{lm-single}.

\section{Examples of results}\label{sec-examples}

Many of our results may be interpreted as leading to cancellation
properties for certain sums involving trace functions. These are often
of interest in analytic number theory, and we therefore state in this
section a few examples with concrete choices of trace functions and of
dynamical systems $(X,\mu,f)$. We also present examples which show that
some of the assumptions of Theorems~\ref{th-main} and~\ref{th-sarnak}
are needed.

\begin{example}\label{ex-1}
  We give first some examples related to continued fraction
  expansions.  Let $(]0,1[,\mu,f)$ be the continued fraction dynamical
  system (see~\cite[Ch. 3]{einsiedler-ward}), in other words
  $$
  \mu=\frac{1}{\log(2)}\frac{dx}{1+x},\quad\quad
  f(x)=\frac{1}{x}-\Bigl\lfloor\frac{1}{x}\Bigr\rfloor.
  $$
  This system is ergodic (loc. cit.) and~$f$ has positive entropy.

  For $x\in [0,1]$, let $(a_n(x))$ be the sequence of partial
  quotients in the continued fraction expansion of $x$. We have
  $a_{n+1}(x)=a_n(f(x))$.
  \par
  Maybe the simplest result that we can deduce from this work is that
  for a fixed integer $k\geq 0$, and for almost all $x$, we have
  $$
  \frac{1}{p} \Bigl| \Bigl\{ 1\leq n<p\,\mid\,
  \Bigl(\frac{n}{p}\Bigr)=1\text{ and } a_n(x)=k\Bigr\}\Bigr| \to
  \frac{1}{2\log 2} \log\Bigl(\frac{(k+1)^2}{k(k+2)}\Bigr)
  $$
  as $p\to +\infty$ along a sparse sequence, where $(n/p)$ is the
  Legendre symbol. This is one half of the density of occurence of
  $a_n(x)=k$, see~\cite[Cor. 3.8]{einsiedler-ward}.
  \par
  This result follows from Theorem~\ref{th-main}, (2) when we take
  $$
  t_p(n)=\frac{1}{2}\Bigl(1+\Bigl(\frac{n}{p}\Bigr)\Bigr),
  $$
  for $p$ odd, and $\varphi$ the characteristic function of
  $a_1(x)=k$, since $\varphi\circ f^n$ is the characteristic function
  of $a_n(x)=k$, and moreover we have $\normf{t_p}\ll 1$ and
  $$
  \frac{1}{p}\sum_{n\in\Ff_p} t_p(n)=\frac{1}{2}.
  $$
\end{example}

\begin{example}
  For $p$ prime, let $t_p$ be the Kloosterman sum function modulo~$p$
  (Example~\ref{ex-trace-easy}, (3)). We have $\normf{t_p}\ll 1$
  and
  $$
  \frac{1}{p}\sum_{n\in\Ff_p}t_p(n)=0.
  $$

  Define $X=\SL_2(\Zz)\backslash \SL_2(\Rr)$ and denote by $\mu$ the
  invariant probability measure on~$X$ (induced by a normalized Haar
  measure on $\SL_2(\Rr)$). Consider the dynamical system with
  $$
  f(g)=g\begin{pmatrix}
    2&0\\0&1/2
  \end{pmatrix}
  $$
  for~$g\in X$ (a part of the geodesic flow). It is known that
  $(X,\mu,f)$ is ergodic and that~$f$ has positive topological entropy.
  \par
  Let $\varphi\colon X \to \Cc$ be an $L^2$-function. Applying
  Theorem~\ref{th-main}, (2), we deduce that for almost all $z\in X$,
  we have
  $$
  \frac{1}{p}\sum_{1\leq n<p}\Kl_2(n;p)
  \varphi\Bigl(z\begin{pmatrix}
    2^n&0\\0&2^{-n}
  \end{pmatrix}\Bigr)\to 0
  $$
  as $p\to +\infty$ along a sparse subsequence.

  On the other hand, let
  $$
  \widetilde{f}(g)=g\begin{pmatrix} 1&1\\0&1
  \end{pmatrix}
  $$
  for~$g\in X$ (part of the horocycle flow). Then
  $(X,\mu,\widetilde{f})$ is ergodic and has zero entropy (note that~$X$
  is not compact, but~$\widetilde{f}$ is uniformly continuous, so
  Bowen's definition of entropy applies). Thus we have
  $$
  \frac{1}{p}\sum_{1\leq n<p}\Kl_2(n;p)
  \varphi\Bigl(z\begin{pmatrix}
    1&n\\0& 1
  \end{pmatrix}z\Bigr)\to 0
  $$
  for any bounded continuous function~$\varphi$ on~$X$ and any $z\in X$
  by Theorem~\ref{th-sarnak}.
\end{example}

\begin{example}\label{ex-fail}
  It is not surprising that pointwise convergence may fail in full
  generality, since this means considering arbitrary sequences $a_n$
  instead of $\varphi(f^n(x))$ (using the shift on $[-1,1]$ on the
  space of bounded sequences). As a simple example, consider again
  $t_p(n)=(n/p)$ (the Legendre symbol modulo $p$). 
  \par
  Let $(p_k)$ be an increasing sequence of primes with
  $p_{k+1}/p_k\to +\infty$; the set of primes thus defined is of
  course sparse. Define a sequence $a_n$ by
  $$
  a_n=\begin{cases}
    1\text{ if $n$ is a square modulo $p_{k+1}$}\\
    0\text{ if $n$ is not a  square modulo $p_{k+1}$},
  \end{cases}
  $$
  where $p_k\leq n<p_{k+1}$. Then
  $$
  \frac{1}{p_k}\sum_{0\leq n<p_k} t_{p_k}(n)a_n=
  1+O\Bigl(\frac{p_{k-1}}{p_k}\Bigr)\to 1.
  $$
  \par
  This example can, for instance, be embedded in the continued
  fraction setting, and can be adapted to pretty arbitrary sequences
  of trace functions.
\end{example}

\begin{example}
  Let $p$ be a prime and $t_p(n)=e(a_pn/p)$ for some
  $a_p\in\Ff_p$. These are trace functions, but we will show that
  Theorem~\ref{th-sarnak} does not hold with $\Kl_2(n;p)$ replaced by
  $t_p(n)$, at least if $(a_p)$ is chosen in a suitable manner.
  \par
  Pick $\theta\in\Rr/\Zz$ which is irrational. There exists $\delta>0$
  such that there are infinitely many approximations $a_p/p$ by rational
  numbers with prime denominators with
  \begin{equation}\label{eq-approx}
    \Bigl|\theta-\frac{a_p}{p}\Bigr|\leq \frac{1}{p^{1+\delta}}.
  \end{equation}
  \par
  Indeed, this was proved by Vinogradov for arbitrary $\delta<1/5$, and
  the best-known result by Matomäki~\cite{mato} applies for any
  $\delta<1/3$. The irrational translation $f(x)=x+\theta$ on $\Rr/\Zz$
  has entropy zero; pick the starting point $x=0$ and the continuous
  function $\varphi(\alpha)=e(\alpha)$ on $\Rr/\Zz$.  Then, for primes
  $p$ for which~(\ref{eq-approx}) holds, we get
  $$
  \frac{1}{p}\sum_{0\leq n<p}e\Bigl(-\frac{na_p}{p}\Bigr)e(n\theta)=
  \frac{1}{p}\frac{1-e(p(\theta-a_p/p))}{1-e(\theta-a_p/p)}\to 1
  $$
  as $p\to +\infty$ along this sequence.

  We note in passing that Theorem~\ref{th-main} does \emph{not} apply
  here (the system is not weakly-mixing, and the norms $\normf{t_p}$ are
  not bounded).

  (Also, we note that one could obtain easier examples using the fact
  that~(\ref{eq-approx}) holds with $\delta=1$ for almost all~$\theta\in
  [0,1]$, which goes back at least to Duffin and
  Schaeffer~\cite{duffin-schaeffer}.) 
\end{example}

\begin{example}\label{ex-traces}
  Here are some additional standard examples of functions on $\Zz$ that
  arise as trace functions modulo $p$ with bounded conductor, and which
  moreover are ``geometrically irreducible'' (an important property
  which means essentially that their mean-square average modulo~$p$ is
  close to~$1$). More examples are found, e.g., in~\cite{pisa}. This
  should give an idea of the variety of ergodic averages that we are
  considering.

  (1) For any $a$ modulo~$p$, the additive character $n\mapsto e(an/p)$
  is a trace function of a so-called Artin-Schreier sheaf; it has
  conductor uniformly bounded.

  (2) For any non-trivial multiplicative character $\chi$ modulo~$p$,
  extended by~$0$ to $\Zz/p\Zz$, the corresponding Dirichlet character
  is a trace function of a so-called Kummer sheaf; it has conductor
  uniformly bounded.

  (3) More generally, let $f\in\Zz[X]$ be a non-constant
  polynomials. The functions $n\mapsto e(f(n)/p)$ and
  $n\mapsto \chi(f(n))$ are trace functions with conductor bounded in
  terms of the degree of~$f$ only. Similarly if~$f$ is a non-constant
  rational function, with the trace function having value~$0$ at poles
  of~$f$, and with conductor depending on the degrees of the numerator
  and denominators of~$f$.

  (4) If~$t_p$ is a geometrically irreducible trace function modulo~$p$,
  and is not proportional to an additive character, then its normalized
  Fourier transform
  $$
  \widehat{t}_p(n)=\frac{1}{\sqrt{p}}\sum_{0\leq m<p}
  e\Bigl(\frac{mn}{p}\Bigr)t_p(m)
  $$
  is also a trace function with conductor bounded only in terms of that
  of~$t_p$ (see~\cite[Prop.\,8.2]{fkm1}).

  So for instance, the fact that the Kloosterman sums used above (see
  Example~\ref{ex-trace-easy}), namely
  $$
  t_p(n)=\frac{1}{\sqrt{p}}\frac{1}{\sqrt{p}}\sum_{1\leq x\leq p-1}
  e\Bigl(\frac{nx+\bar{x}}{p}\Bigr),
  $$
  define a geometrically irreducible  trace function modulo~$p$ with
  bounded conductor follows from this principle applied to the trace
  function $n\mapsto e(\bar{n}/p)$ (extended by~$0$ for~$n=0$), which is
  a special case of Example (3).

\end{example}

As a final remark, we emphasize that trace functions behave in many ways
like random functions (e.g., they often have Gowers norms that are as
small as those of random functions, as shown by Fouvry, Kowalski and
Michel in~\cite{fkm-gowers}), and one can think of them in these terms
in a first reading.

\section{Properties of trace functions}
\label{sec-trace}

We summarize here the properties of trace functions that we will
use. These are essentially related to the Fourier transform (which was
already mentioned in Example~\ref{ex-traces}, (4), as an operation
preserving trace functions).

First, we fix throughout the paper a prime number $\ell$, and impose
that all other prime numbers we consider below are different from
$\ell$ (one can take~$\ell=2$ and only consider odd primes). We fix an
isomorphism $\iota\colon \bQl\to \Cc$. We first clarify our
terminology and conventions concerning sheaves:

\begin{definition}[Sheaves and uniform sheaves]\label{def-sheaf}
  Let $p\not=\ell$ be a prime.

  (1) A \emph{sheaf} $\sheaf{F}$ modulo $p$ is a middle extension
  $\bQl$-sheaf on $\Aa^1_{\Ff_p}$, pure of weight $0$. A \emph{Fourier
    sheaf} modulo $p$ is a sheaf modulo $p$ that is of Fourier type in
  the sense of Katz~\cite[7.3.4]{katz-esde}, in other words, none of
  its geometrically irreducible components is geometrically isomorphic
  to an Artin-Schreier sheaf.
  \par
  (2) The \emph{trace function} $t_{\sheaf{F}}$ of a sheaf $\sheaf{F}$
  modulo $p$ is the complex-valued function on $\Zz$ defined by
  $$
  t_{\sheaf{F}}(x)=\iota(\Tr(\frob_{x,\Ff_p}|\sheaf{F}_{\bar{x}}))
  $$
  where $\frob_{x,\Ff_p}$ is the Frobenius at $x\in\Ff_p$, and
  $\bar{x}$ is a geometric point above $x$.
  \par
  (3) Let $\sheaf{F}$ be a sheaf modulo $p$ and let $k\geq 0$ be an
  integer. We say that $\sheaf{F}$ is \emph{$k$-uniform} if no
  geometrically irreducible component of $\sheaf{F}$ is geometrically
  isomorphic to a sheaf of the type $\sheaf{L}_{\psi(P)}$ where $\psi$
  is a non-trivial additive character of $\Ff_p$ and $P\in\Ff_p[X]$ is
  a polynomial of degree $\leq k$.
  \par
  (4) We say that $\sheaf{F}$ is almost $k$-uniform with average $\mu$
  if $\sheaf{F}\simeq \mu\bQl\oplus \sheaf{G}$ where $\sheaf{G}$ is
  $k$-uniform.
\end{definition}

Note that speaking of geometrically irreducible components of a
sheaf~$\mcF$ modulo~$p$ is legitimate, since such sheaves (being pure
of some weight) are geometrically semisimple by work of Deligne.

\begin{example}
  To say that $\sheaf{F}$ is $0$-uniform (resp. $1$-uniform) means that
  $\sheaf{F}$ has no trivial geometrically irreducible component
  (resp. is of Fourier type in the sense of
  Katz~\cite[7.3.5]{katz-esde}).
\end{example}

Let~$\sheaf{F}$ be a sheaf modulo~$p$. Fouvry, Kowalski and Michel
defined its \emph{conductor} $\cond(\sheaf{F})$
in~\cite[Def. 1.13]{fkm1}; it is a positive integer which vanishes if
and only if~$\sheaf{F}$ is zero.

The conductor measures quantitatively the complexity of a sheaf in
many estimates.. One essential property is a bound on the size of the
trace function: for any sheaf $\sheaf{F}$ modulo $p$, and any
$x\in\Zz$, we have
\begin{equation}\label{eq-linfty}
  |t_{\sheaf{F}}(x)|\leq \cond(\sheaf{F}).
\end{equation}

Using the conductor, we define the trace norms as follows.

\begin{definition}[Trace norms]\label{def-trace-norms}
  Let~$p$ be a prime different from~$\ell$. Let~$\mathcal{C}(\Ff_p)$
  denote the vector space of $\Cc$-valued functions on~$\Ff_p$.

  For~$f\in\mathcal{C}(\Ff_p)$, we define
  $$
  \normt{f}= \inf\Bigl\{\sum_i\cond(\sheaf{F}_i)|a_i|\,\mid\, f=\sum_{i}
  a_it_{\sheaf{F}_i},\ \sheaf{F}_i\text{ geometrically
    irreducible}\Bigr\},
  $$
  and
  $$
  \normf{f}= \inf\Bigl\{\sum_i\cond(\sheaf{F}_i)|a_i|\,\mid\, f=\sum_{i}
  a_it_{\sheaf{F}_i},\ \sheaf{F}_i\text{ geometrically irreducible
    Fourier}\Bigr\}.
  $$
  \par
  In both cases, the infimum runs over decompositions of~$f$ in linear
  combinations of trace functions of sheaves of the indicated type.
\end{definition}

It is straightforward that both of these are norms, and clear
that~$\normt{f}\leq\normf{f}$.

\begin{remark}
  Although we mentioned that trace functions can be thought of as
  ``random'' functions, one should note that for most simple models of
  random functions $f\colon \Ff_p\to\Cc$ (e.g., taking all $f(n)$ to be
  independent and uniform over the unit disc), the norm $\normt{f}$ will
  in fact be very large, as explained in a paper of Fouvry, Kowalski and
  Michel (see~\cite[Th. 5.1]{counting-sheaves}).
\end{remark}
\par
We now state some of the fundamental analytic properties of trace
functions, starting with the general form of the ``completion method''
for short sums of trace functions.

\begin{proposition}
  [Completion method]
  \label{pr-completion}
Let $\sheaf{F}$ be a Fourier sheaf
modulo $p$ and $t\colon \Zz\to \Cc$ its trace function. For any
interval $I$ in $\Zz$ of length $\leq p$, we have
$$
\sum_{n\in I}t(n)\ll \sqrt{p}(\log p)
$$
where the implied constant depends only on the conductor of
$\sheaf{F}$.
\end{proposition}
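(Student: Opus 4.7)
The plan is to execute the classical \emph{completion method}: decompose the characteristic function of $I$ on $\Ff_p$ into additive characters and then control the resulting complete exponential sums using the Fourier-sheaf hypothesis. Since $I$ has length at most $p$, I may identify it with a subset of $\Ff_p\simeq\{0,\dots,p-1\}$, and write
\[
\sum_{n\in I}t(n)=\sum_{n\in\Ff_p}\charfun_I(n)\,t(n)
=\frac{1}{p}\sum_{h\in\Ff_p}\what{\charfun}_I(h)\,S(h),\qquad
S(h)=\sum_{n\in\Ff_p}t(n)\,e(hn/p),
\]
where $\what{\charfun}_I(h)=\sum_{n\in I}e(-hn/p)$ is the discrete Fourier transform of $\charfun_I$.

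The first key input is the estimate on $S(h)$. Because $\sheaf{F}$ is a Fourier sheaf in the sense of Katz, its $\ell$-adic Fourier transform $\ft(\sheaf{F})$ is again a middle-extension sheaf on $\Aa^1_{\Ff_p}$, pure of weight zero, whose trace function equals (up to a normalization by $\sqrt{p}$ and a sign) the function $h\mapsto S(h)/\sqrt{p}$; moreover, by the stability result of Fourier transform on conductors (see e.g.~\cite[Prop.\,8.2]{fkm1}), one has $\cond(\ft(\sheaf{F}))\ll_{\cond(\sheaf{F})}1$. Applying the trivial sup-norm bound~\refs{eq-linfty} to $\ft(\sheaf{F})$ yields $|S(h)|\ll \sqrt{p}$, uniformly in $h\in\Ff_p$, with implied constant depending only on $\cond(\sheaf{F})$.

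The second key input is the standard estimate for the Fourier coefficients of an interval: $|\what{\charfun}_I(0)|=|I|\leq p$ and, for $h\neq 0$, the geometric-series formula gives $|\what{\charfun}_I(h)|\ll \min(|I|,|\sin(\pi h/p)|^{-1})$. Letting $h_0\in(-p/2,p/2]$ be the signed representative of $h$, one gets $|\what{\charfun}_I(h)|\ll p/|h_0|$, and summing yields the classical inequality
\[
\sum_{h\in\Ff_p}|\what{\charfun}_I(h)|\ll p+\sum_{1\leq |h_0|\leq p/2}\frac{p}{|h_0|}\ll p\log p.
\]

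Putting the two ingredients together,
\[
\Bigl|\sum_{n\in I}t(n)\Bigr|\leq \frac{1}{p}\sum_{h\in\Ff_p}|\what{\charfun}_I(h)|\,|S(h)|\ll \frac{\sqrt{p}}{p}\cdot p\log p=\sqrt{p}\log p,
\]
with implied constant depending only on $\cond(\sheaf{F})$. The only non-trivial step is the bound $|S(h)|\ll\sqrt{p}$; this is precisely where the Fourier-sheaf hypothesis (to ensure that $\ft(\sheaf{F})$ is itself a pure middle-extension sheaf rather than a complex with unbounded invariants) and Deligne's Riemann Hypothesis enter. Everything else is elementary discrete harmonic analysis on $\Ff_p$.
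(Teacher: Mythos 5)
Your proof is correct and is exactly the argument the paper has in mind: it cites \cite[\S 1.1, \S 2.2]{short-sums} for precisely this completion-method computation, with the same two inputs (the sup-norm bound on the complete sums via the conductor bound for $\ft(\sheaf{F})$ from~\cite[Prop.\,8.2]{fkm1} together with Deligne's Riemann Hypothesis, and the $\ll p\log p$ bound for the $\ell^1$-norm of the Fourier coefficients of an interval). Nothing to add.
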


See~\cite[\S 1.1, \S 2.2]{short-sums} for the argument, which is
straightforward, granted the very deep fact (a case of Deligne's
Riemann Hypothesis in its strongest form) that the normalized discrete
Fourier transform of $t$ is the trace function of a sheaf
$\ft(\sheaf{F})$, which is also a middle-extension of weight $0$, with
conductor $\leq 10 \cond(\sheaf{F})^2$ (this last important estimate
is proved by Fouvry, Kowalski and Michel in~\cite[Prop. 8.2]{fkm1}).

\begin{proposition}
\label{pr-additive-convol}
Let $\sheaf{F}$ and $\sheaf{G}$ be middle-extension $\ell$-adic
sheaves of weight $0$ modulo $p$. 
\par
\emph{(1)} The additive middle convolution $\sheaf{F}*_! \sheaf{G}$ is
a middle-extension $\ell$-adic sheaf of weights $\leq 0$, and it has
conductor bounded in terms of the conductors of $\sheaf{F}$ and
$\sheaf{G}$.
\par
\emph{(2)} Suppose that $\sheaf{F}$ is a Fourier sheaf. The additive
middle convolution $\sheaf{F}*_!  \dual(\sheaf{F})$ contains no
Artin-Schreier sheaf as geometrically irreducible component.
\end{proposition}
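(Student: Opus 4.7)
Both parts exploit the Fourier--Deligne transform $\ft$ on $\Aa^1_{\Ff_p}$, which is an equivalence of categories (Laumon, Katz) that intertwines additive middle convolution with tensor product of perverse sheaves (up to a Tate twist and shift), preserves purity, and whose effect on conductors is controlled by the estimate $\cond(\ft(\sheaf{H}))\leq 10\cond(\sheaf{H})^2$ already invoked via~\cite[Prop.\,8.2]{fkm1}. Applying $\ft$, the convolution $\sheaf{F}*_!\sheaf{G}$ becomes (essentially) $\ft(\sheaf{F})\otimes \ft(\sheaf{G})$, and we get back to the original by an inverse Fourier transform.

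For (1), the plan is to control the tensor product $\ft(\sheaf{F})\otimes \ft(\sheaf{G})$ of middle-extension sheaves of weight~$0$ with conductors bounded polynomially in $\cond(\sheaf{F})$ and $\cond(\sheaf{G})$. The rank, the singular locus, and the Swan conductors of a tensor product are controlled explicitly in terms of the factors: the rank is the product of the ranks, the singular set is contained in the union of the singular sets, and $\sw_x(\sheaf{A}\otimes \sheaf{B})\leq \rk(\sheaf{A})\sw_x(\sheaf{B})+\rk(\sheaf{B})\sw_x(\sheaf{A})$. Weight boundedness comes from Deligne--Laumon: purity of each factor and of the Fourier transform, together with subadditivity of weights under tensor product, give weights $\leq 0$. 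A second application of $\ft$ recovers a middle-extension sheaf with a conductor bound polynomial in $\cond(\sheaf{F})$ and $\cond(\sheaf{G})$.

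For (2), I would argue by contradiction: suppose some $\sheaf{L}_{\psi(aX)}$ is a geometrically irreducible component of $\sheaf{F}*_!\dual(\sheaf{F})$. Under~$\ft$, Artin--Schreier sheaves correspond to punctual (skyscraper) sheaves, so $\ft(\sheaf{F})\otimes \ft(\dual(\sheaf{F}))$ would acquire a punctual component supported at $-a$. But $\sheaf{F}$ is a Fourier sheaf, so $\ft(\sheaf{F})$ is a middle-extension sheaf on $\Aa^1$, and since duality preserves the Fourier property, so is $\ft(\dual(\sheaf{F}))$; in particular, neither has sections supported on a finite subset. The tensor product of two such sheaves cannot have a punctual subquotient, as any such component would propagate to sections of a factor supported at a point, contradicting the middle-extension property.

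The main obstacle I anticipate is bookkeeping: tracking the shifts, Tate twists, and the distinction between middle convolution and $!$-convolution carefully under $\ft$, and in (2) justifying the no-punctual-subquotient argument at the level of perverse sheaves (since the tensor product of two middle-extension sheaves need not itself be a middle extension, the vanishing of punctual constituents in the relevant perverse cohomology must be extracted from the fact that both factors are genuinely non-punctual middle extensions).
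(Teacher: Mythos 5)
Your proposal is correct and follows essentially the same route as the paper: pass through the Fourier--Deligne transform to turn middle convolution into tensor product, bound the conductor via the estimate of \cite[Prop.\,8.2]{fkm1}, and for (2) observe that an Artin--Schreier component would Fourier-transform into a punctual subsheaf of a tensor product of middle extensions, which cannot exist. The extra care you take about weights and about punctual subquotients at the perverse level is reasonable but does not change the argument in substance.
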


\begin{proof}
  For (1), the first assertion follows from the definition of the
  middle convolution and from Deligne's Riemann Hypothesis. To
  estimate the conductor, it is simplest here to appply the Fourier
  transform, which is an exact functor transforming
  middle-convolution into tensor product, so that
  $$
  \sheaf{F}*_! \sheaf{G}=\overline{\ft}(\ft(\sheaf{F})\otimes
  \ft(\sheaf{G})).
  $$
  We can then apply the estimate~\cite[Prop.\,8.2]{fkm1} for the
  conductor of a Fourier transform.
  \par
  For (2), applying the Fourier transform, a hypothetical injection
  $\sheaf{L}_{\psi(ax)}\injecte \sheaf{F} *_! \dual(\sheaf{F})$ would
  imply the existence of an injection
  $$
  \delta_a \injecte \ft_{\psi}(\sheaf{F})\otimes
  \dual(\ft_{\psi}(\sheaf{F}))
  $$
  of a punctual skyscraper sheaf into
  $\ft_{\psi}(\sheaf{F})\otimes \dual(\ft_{\psi}(\sheaf{F}))$. However,
  since both $\ft_{\psi}(\sheaf{F})$ and its dual are middle-extension
  sheaves when $\sheaf{F}$ is a middle-extension, their tensor product
  has no punctual part.
\end{proof}

The following definition will be convenient in some places.

\begin{definition}\label{def-af}
  A family $(\sheaf{F}_p)_p$ of sheaves modulo $p$ indexed by (a
  subset of) the primes $\not=\ell$ is an \emph{almost Fourier family}
  if the conductor of $\sheaf{F}_p$ is bounded independently of $p$,
  and if there exists an integer $r\geq 0$ such that
  $\sheaf{F}_p=r\bQl\oplus \widetilde{\sheaf{F}}_p$ for all $p$, where
  $\widetilde{\sheaf{F}}_p$ is a Fourier sheaf modulo $p$. We say that
  $r$ is the mean of the family.
\end{definition}

For an almost Fourier family, the trace functions $t_p$ of
$\sheaf{F}_p$ satisfy
$$
t_p(x)=r+\widetilde{t}_p(x)
$$
where $\widetilde{t}_p$ is the trace function of
$\widetilde{\sheaf{F}}_p$.

The following proposition will be only be used for polynomials $P$ of
degree~$1$, but since it is of independent interest, we state and
prove it in general (see~\cite[Th.\,2.7]{ess} for a special case).

\begin{proposition}\label{pr-weyl}
  Let $k\geq 1$ be an integer and define $\gamma_k=2^{-k}$. Let $p$ be
  a prime and let $\sheaf{F}$ be a $k$-uniform $\ell$-adic sheaf
  modulo $p$ with trace function $t(n)$. Let $P\in\Rr[X]$ be a
  polynomial of degree $\leq k$. Let $I$ be an interval in $\Zz$ of
  length $|I|\geq 1$. We have
  \begin{equation}\label{eq-weyl}
    \sum_{n\in I} t(n)e(P(n)) \ll \cond(\sheaf{F})^2
    \Bigl(|I|^{1-2\gamma_k}p^{\gamma_k}+|I|p^{-\gamma_k}\Bigr)
    (\log p)^{2\gamma_k}
  \end{equation}
  where the implied constant is absolute.
\end{proposition}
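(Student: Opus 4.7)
The plan is to proceed by induction on $k \geq 1$, using van der Corput (Weyl) differencing to reduce the degree of the polynomial phase by one at each step. For the base case $k=1$ the polynomial is $P(n) = \alpha n + \beta$ and $1$-uniform is the same as Fourier. I would invoke Dirichlet's theorem to choose $a \in \Zz$ with $|\alpha - a/p| \leq 1/p$ and write the sum as
$$e(\beta) \sum_{n \in I} \bigl( t(n) e(an/p) \bigr) e\bigl((\alpha - a/p) n\bigr).$$
The twisted factor $t(n) e(an/p)$ is the trace function of $\sheaf{F} \otimes \sheaf{L}_{\psi(ax)}$, which remains Fourier because $1$-uniformity of $\sheaf{F}$ forbids any Artin--Schreier subsheaf from appearing after the twist, and whose conductor is $\ll \cond(\sheaf{F})$. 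Partial sums of this twisted trace function over subintervals of $I$ are $\ll \cond(\sheaf{F})^2 \sqrt{p}\log p$ by Proposition~\ref{pr-completion}, and Abel summation against the smoothly varying factor $e((\alpha - a/p)n)$ (whose derivative is $\ll 1/p$) yields the desired bound with $\gamma_1 = 1/2$.

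For the inductive step, assuming the bound in degree $\leq k-1$, I first dispose of the range $|I| \leq p^{2\gamma_k}$ by the trivial bound $|S| \leq \cond(\sheaf{F})|I|$. Otherwise I apply van der Corput's inequality with a parameter $H \in [1,|I|]$:
$$|S|^2 \ll \frac{|I|}{H}\Bigl( |I|\cond(\sheaf{F})^2 + \sum_{0<|h|\leq H}|T_h|\Bigr), \qquad T_h = \sum_{n \in I\cap(I-h)} t(n)\overline{t(n+h)} e(\Delta_h P(n)),$$
where $\Delta_h P(n) = P(n+h) - P(n)$ has degree $\leq k-1$ in $n$ and $n \mapsto t(n)\overline{t(n+h)}$ is the trace function of $\sheaf{F}_h := \sheaf{F}\otimes [+h]^* \dual(\sheaf{F})$, with $\cond(\sheaf{F}_h) \ll \cond(\sheaf{F})^2$. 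Granting $(k-1)$-uniformity of $\sheaf{F}_h$, the inductive hypothesis bounds each $|T_h|$; summing, using $\gamma_{k-1} = 2\gamma_k$, produces
$$|S|^2 \ll \cond(\sheaf{F})^4 \Bigl( |I|^2/H + |I|^{2-4\gamma_k} p^{2\gamma_k} + |I|^2 p^{-2\gamma_k}\Bigr)(\log p)^{4\gamma_k}.$$
Choosing $H \sim p^{2\gamma_k}$ balances the first and third terms and taking square roots yields the claimed inequality (the quartic conductor dependence is absorbed into $\cond(\sheaf{F})^2$ since the bound is not tracked sharply in the conductor).

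The main obstacle is the structural claim that $\sheaf{F}_h$ is $(k-1)$-uniform for every $h \neq 0$. I would handle it as follows: by the geometric semisimplicity of $\sheaf{F}$, reduce to the case where $\sheaf{F}$ is geometrically irreducible. If $\sheaf{L}_{\psi(Q)}$ with $\deg Q \leq k-1$ occurs geometrically in $\sheaf{F}_h$, then by adjunction and rank considerations one gets a geometric isomorphism $[+h]^*\sheaf{F} \otimes \sheaf{L}_{\psi(Q)} \simeq \sheaf{F}$. Iterating this relation $j$ times gives $[+jh]^*\sheaf{F} \simeq \sheaf{F} \otimes \sheaf{L}_{\psi(R_j)}$ for an explicit polynomial $R_j$ of degree $\leq k-1$; evaluating at an integer $j$ with $jh \equiv 0 \pmod p$ forces $\sheaf{L}_{\psi(R_j)}$ to be geometrically trivial modulo the Artin--Schreier relation, and a leading-coefficient analysis then produces an explicit $\sheaf{L}_{\psi(\tilde P)}$ with $\deg \tilde P \leq k$ as a geometric subsheaf of $\sheaf{F}$, contradicting the $k$-uniformity hypothesis. (Any residual exceptional values of $h$ not covered by this argument would form a set of $O(1)$ cardinality and could be absorbed into the $h=0$ contribution.)
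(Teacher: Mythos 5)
Your overall architecture coincides with the paper's: induction on $k$ via differencing, with the whole weight of the argument resting on the claim that the differenced sheaf $[+h]^*\sheaf{F}\otimes\dual(\sheaf{F})$ is $(k-1)$-uniform for all but $O(\cond(\sheaf{F})^2)$ values of $h$. Within that architecture you make two genuinely different analytic choices. For $k=1$ the paper completes the sum by discrete Fourier inversion and bounds the resulting geometric sums directly, for an arbitrary real frequency $\theta$; your route via a rational approximation $a/p$, a twist by $\sheaf{L}_{\psi(ax)}$ (legitimate: $1$-uniformity does survive the twist), Proposition~\ref{pr-completion} and Abel summation reaches the same bound, at the cost of having to treat $|I|>p$ inside the base case rather than by the paper's final splitting into blocks of length $p$. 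In the inductive step you use van der Corput with a parameter $H\sim p^{2\gamma_k}$, whereas the paper differences over all of $I-I$; your version produces the secondary term $|I|p^{-\gamma_k}$ organically and your exponent bookkeeping (including the $\cond^4\to\cond^2$ step after the square root and the disposal of $|I|\leq p^{2\gamma_k}$ by the trivial bound) is correct. These are acceptable variants and arguably cleaner.

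The one place where your argument is genuinely incomplete is the structural claim itself. Your iteration correctly reduces matters to: a geometrically irreducible $\sheaf{F}$ with $[+h]^*\sheaf{F}\simeq\sheaf{F}\otimes\sheaf{L}_{\psi(Q)}$, $\deg Q\leq k-1$, hence (after twisting by a discrete antiderivative of $Q$, which exists since $k<p$) a geometrically irreducible sheaf invariant under \emph{all} translations of $\Aa^1$. But the final step --- ``a leading-coefficient analysis then produces an explicit $\sheaf{L}_{\psi(\tilde P)}$ as a geometric subsheaf of $\sheaf{F}$'' --- is not justified: translation-invariance of an irreducible sheaf of rank $>1$ does not by itself yield a rank-one Artin--Schreier subsheaf. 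One must separately rule out (i) sheaves with a singularity in $\Aa^1$, whose translation orbit forces $\geq p$ singularities and hence $\cond(\sheaf{F})\geq p$, and (ii) lisse translation-invariant sheaves of large Swan conductor at $\infty$, which is a local-monodromy argument at infinity; only then does one conclude that $\sheaf{F}\simeq\sheaf{L}_{\psi(\tilde P)}$ with $\deg\tilde P\leq k$. This is precisely the content of the paper's Lemma~\ref{lm-fourier}, which imports \cite[Lemmas 5.3, 5.4]{fkm-gowers} and which is why that lemma carries the hypotheses $p>k$ and $\cond(\sheaf{F})<p$ (the excluded cases being absorbed by the trivial bound). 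Your proof needs either this citation or a genuine argument for step (ii); as written, that step would fail for a hypothetical higher-rank translation-invariant sheaf. Your handling of the exceptional $h$ (non-disjoint singularity sets, $O(\cond^2)$ of them, trivially bounded) does match the paper.
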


For $k\geq 2$, the proof will use the following lemma; readers only
interested in main results of this paper may skip this in a first
reading.

\begin{lemma}\label{lm-fourier}
  Let $k\geq 1$ be an integer and $p$ a prime.  Let $\sheaf{F}$ be a
  geometrically isotypic $k$-uniform $\ell$-adic sheaf modulo $p$ for
  some integer $k\geq 1$. Let $h\in\Ff_p$ be such that the set of
  singularities of $[+h]^*\sheaf{F}$ and $\dual(\sheaf{F})$ are
  disjoint.  If $p>k$ and $\cond(\sheaf{F})<p$, and if $h\not=0$, then
  $[+h]^*\sheaf{F}\otimes\dual(\sheaf{F})$ is a $(k-1)$-uniform
  $\ell$-adic sheaf modulo $p$ with
  conductor~$\ll \cond(\sheaf{F})^2$.
\end{lemma}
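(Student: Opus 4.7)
The plan is to separately establish the conductor bound and the $(k-1)$-uniformity.

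\textbf{Conductor bound.} The hypothesis that the finite singular loci of $[+h]^*\sheaf{F}$ and $\dual(\sheaf{F})$ are disjoint means that at every finite singular point of the tensor product, at most one of the two factors is ramified while the other is lisse. For such a point the local conductor contribution to $[+h]^*\sheaf{F}\otimes\dual(\sheaf{F})$ equals the local contribution of the ramified factor multiplied by the rank of the lisse one. Summing these contributions, together with those at $\infty$ (bounded similarly in terms of the ranks, Swan conductors, and drops of the factors), gives $\cond([+h]^*\sheaf{F}\otimes\dual(\sheaf{F})) \ll \rank(\sheaf{F})\cond(\sheaf{F}) \leq \cond(\sheaf{F})^2$ with an absolute implied constant.

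\textbf{Uniformity.} Assume for contradiction that $\sheaf{L}_{\psi(Q)}$, for some $Q\in\Ff_p[X]$ of degree at most $k-1$, is a geometrically irreducible component of $[+h]^*\sheaf{F}\otimes\dual(\sheaf{F})$. Writing $\sheaf{F}\otimes_{\Ff_p}\bFp \simeq m\sheaf{G}$ with $\sheaf{G}$ geometrically irreducible (possible by the isotypy assumption), this condition is equivalent, by Schur's lemma applied to the two geometrically irreducible sheaves $\sheaf{G}\otimes\sheaf{L}_{\psi(Q)}$ and $[+h]^*\sheaf{G}$ of equal rank, to a geometric isomorphism
$$
[+h]^*\sheaf{G} \simeq \sheaf{G}\otimes\sheaf{L}_{\psi(Q)}.
$$
Since $\deg Q < p-1$ and $h\neq 0$, the linear map $\tilde P\mapsto \tilde P(X+h)-\tilde P(X)$ from polynomials of degree at most $k$ to those of degree at most $k-1$ is surjective (its kernel being the constants), so I choose $\tilde P\in\Ff_p[X]$ with $\deg \tilde P\leq k$ and $\tilde P(X+h)-\tilde P(X)=Q(X)$. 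Setting $\sheaf{G}':=\sheaf{G}\otimes\sheaf{L}_{\psi(-\tilde P)}$, a direct computation gives $[+h]^*\sheaf{G}'\simeq\sheaf{G}'$ geometrically; iterating, and using that $\{jh:j\in\Zz\}=\Ff_p$, yields $[+a]^*\sheaf{G}'\simeq\sheaf{G}'$ for every $a\in\Ff_p$.

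The finite singularities of $\sheaf{G}'$ coincide with those of $\sheaf{G}$ (since $\sheaf{L}_{\psi(-\tilde P)}$ is lisse on $\Aa^1$), number at most $\cond(\sheaf{F})<p$, and form an $\Ff_p$-invariant subset of $\bFp$; any nonempty such set has at least $p$ elements, so this set is empty and $\sheaf{G}'$ is lisse on $\Aa^1_{\bFp}$. I then invoke the rigidity statement that a geometrically irreducible $\Ff_p$-translation-invariant lisse sheaf on $\Aa^1_{\bFp}$ of rank $r<p$ must have rank $1$. A rank-one lisse sheaf on $\Aa^1_{\bFp}$ is of the form $\sheaf{L}_{\psi(R)}$ geometrically, and the translation invariance, combined with normalization ($\deg R<p$), forces $\deg R\leq 1$. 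Hence $\sheaf{G}\simeq\sheaf{L}_{\psi(\tilde P+cX)}$ geometrically for some $c\in\bFp$, a polynomial of degree at most $k$, contradicting the $k$-uniformity of $\sheaf{F}$.

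\textbf{Main obstacle.} The crux is the rank-one reduction. One natural approach is to descend $\sheaf{G}'$ through the étale Artin--Schreier quotient $\pi\colon x\mapsto x^p-x$, producing a geometrically irreducible lisse sheaf $\sheaf{H}$ on $\Aa^1_{\bFp}$ with $\pi^*\sheaf{H}\simeq\sheaf{G}'$ of the same rank; the task is then to extract sufficient rigidity from the geometric monodromy of $\sheaf{H}$, the purity inherited from $\sheaf{F}$, and the rank bound $r<p$ to force $r=1$. This is the main technical content of the argument.
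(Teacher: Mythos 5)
Your argument tracks the paper's proof (which defers the key steps to \cite[\S 5]{fkm-gowers}) up to a point: the reduction by Schur's lemma to a geometric isomorphism $[+h]^*\sheaf{G}\simeq \sheaf{G}\otimes\sheaf{L}_{\psi(Q)}$, the untwisting by a polynomial $\tilde P$ of degree $\leq k$ solving the difference equation (valid since $k<p$), the resulting $\Ff_p$-translation invariance of $\sheaf{G}'$, and the use of $\cond(\sheaf{F})<p$ to show the singular locus is empty are all in line with \cite[Lemmas 5.3, 5.4]{fkm-gowers}, and the conductor bound is standard. (You should also record, as the paper does via \cite[Lemma 2.2]{fkm-gowers}, that the disjointness of the singularities is what makes the tensor product a middle extension, i.e., a sheaf in the sense of Definition~\ref{def-sheaf}.)

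The genuine gap is the final ``rank-one reduction'', which you explicitly leave unproved and which is the entire nontrivial content of the lisse case. Moreover, the rigidity statement you propose to invoke --- that a geometrically irreducible, $\Ff_p$-translation-invariant lisse sheaf on $\Aa^1_{\bFp}$ of rank $r<p$ must have rank one --- is missing an essential hypothesis. Observe that you use $\cond(\sheaf{F})<p$ only once, to empty the singular locus; but the conductor also bounds the Swan conductor at $\infty$, and it is this second consequence that controls the lisse case. The paper resolves it by citing \cite[Lemma 5.4 (2)]{fkm-gowers}, whose conclusion is a dichotomy: either $\sw_{\infty}(\sheaf{F})\geq p$ (hence $\cond(\sheaf{F})\geq p$, excluded by hypothesis) or $\sheaf{F}\simeq\sheaf{L}_{\psi(Q)}$ with $\deg Q\leq k$. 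Invariance under the finite group $\Ff_p$ (rather than the full additive group) is a weak constraint on the wild part at $\infty$, and the mechanism forcing rank one in \cite{fkm-gowers} is an analysis of the break decomposition at $\infty$ that degenerates precisely when the Swan conductor reaches $p$; a bound on the rank alone cannot substitute for it. Your sketch via descent along $x\mapsto x^p-x$ does not indicate how the rank of the descended sheaf would be controlled, so as written the proof is incomplete at its crucial step.
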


\begin{proof}
  This is implicit in the work of Fouvry, Kowalski and Michel
  in~\cite[\S 5]{fkm-gowers}. Precisely, under the assumption on $h$,
  the tensor product $[+h]^*\sheaf{F}\otimes\dual(\sheaf{F})$ is an
  $\ell$-adic sheaf modulo $p$ (the key point is that it is a
  middle-extension, see~\cite[Lemma 2.2]{fkm-gowers}).  If the
  conclusion does not hold, we deduce from the definition of
  $(k-1)$-uniform sheaf that there exists a polynomial $P$ of degree
  $\leq k-1$ such that
  $$
  \sheaf{F}\simeq [+h]^*\sheaf{F}\otimes\sheaf{L}_{\psi(P)}
  $$
  (see~\cite[Lemma 5.3 (2)]{fkm-gowers}). From this, we see first that
  $\cond(\sheaf{F})\geq p$, if $\sheaf{F}$ is not lisse on
  $\Aa^1_{\Ff_p}$ (because the orbit of a singularity under
  $x\mapsto x+h$ is contained in the set of singularities, so there
  are at least $p$ of them, each of which contributes at least $1$ to
  the sum of drops of $\sheaf{F}$). Otherwise, since $p>k$,
  by~\cite[Lemma 5.4 (2)]{fkm-gowers}, it follows that either
  $\cond(\sheaf{F})\geq p$ (because of the contribution of the Swan
  conductor at $\infty$) or $\sheaf{F}$ is isomorphic to
  $\sheaf{L}_{\psi(Q)}$ for some polynomial of degree $\leq k$. The
  lemma follows, by contraposition.
\end{proof}

\begin{proof}
  We first consider the case $|I|\leq p$. We then need to show that
  \begin{equation}\label{eq-target}
    \sum_{n\in I} t(n)e(P(n)) \ll \cond(\sheaf{F})^2
    |I|^{1-2\gamma_k}p^{\gamma_k}(\log p)^{2\gamma_k},
  \end{equation}
  and we may assume (by additive change of variable) that $I$ is
  contained in $\{0,\ldots,p-1\}$.
  \par
  We assume (as we may) that~$P(0)=0$. If we decompose the arithmetic
  semisimplification of $\sheaf{F}$ in arithmetically irreducible
  components, say $\sheaf{F}_i$, then one of the following is true
  (see~\cite[Lemma 5.3]{fkm-gowers}):
  \par
  (1) For some $n\geq 2$, the sheaf $\sheaf{F}_i$ is induced from some
  irreducible sheaf on $\Spec(\Ff_{p^n})$ by pushforward along the map
  $\Spec(\Ff_{p^n})\to \Spec(\Ff_p)$; in this case, the trace function
  $t_i$ of $\sheaf{F}_i$ is identically $0$ (see~\cite[Lemma
  5.3]{fkm-gowers} or~\cite[Proof of Prop. 8.3]{fkm1}), so that the
  estimate~(\ref{eq-weyl}) is trivial.
  \par
  (2) The sheaf $\sheaf{F}_i$ is geometrically isotypic.
  \par
  Since the estimate~(\ref{eq-weyl}) is linear in $\sheaf{F}$, we see
  that we may reduce the proof to the case where $\sheaf{F}$ is
  geometrically isotypic.
  \par
  We now proceed by induction on $k$. The key tool is Weyl
  differencing.  Assume first that $k=1$ and that $P(n)=\theta n$
  (here we do not need to assume that $\sheaf{F}$ is isotypic). By
  discrete Fourier inversion, we obtain
  $$
  \sum_{n\in I}t(n)e(\theta n)= \sum_{0\leq h<p} \widehat{t}(h)
  \alpha_p(h,\theta)
  $$
  where
  $$
  \alpha_p(h,\theta)=\frac{1}{\sqrt{p}}\sum_{n\in I}
  e\Bigl(n\Bigl(\frac{h}{p}+\theta\Bigr)\Bigr), \quad\quad
  \widehat{t}(h)=\frac{1}{\sqrt{p}}\sum_{0\leq
    n<p}t(n)e\Bigl(\frac{nh}{p}\Bigr).
  $$
  \par
  Since $\sheaf{F}$ is $1$-uniform, it is a Fourier sheaf, and we have
  $|\widehat{t}(h)|\leq \cond(\ft(\sheaf{F}))\ll \cond(\sheaf{F})^2$
  (by~\cite[Prop.\,8.2]{fkm1}). On the other hand, by summing the
  geometric sum, we have
  $$
  |\alpha_p(h,\theta)|\leq
  \min\Bigl(\frac{|I|}{\sqrt{p}},\frac{1}{\sqrt{p}}
  \frac{1}{\|\tfrac{h}{p}+\theta\|}\Bigr),
  $$
  where $\|\cdot\|$ on the right-hand side is the distance to the
  nearest integer. We use the first bound for that value $h_0$ of $h$
  where $|h_0/p+\theta|\leq 1/p$, and the other values of
  $\alpha_p(h,\theta)$ are then bounded by
  $$
  \frac{\sqrt{p}}{2},\quad\cdots,\quad \frac{\sqrt{p}}{p},
  $$
  so that
  $$
  \sum_{0\leq h<p}|\alpha_p(h,\theta)|\ll \sqrt{p}(\log p),
  $$
  with an absolute implied constant.  Combining these results we
  obtain
  $$
  \Bigl| \sum_{0\leq h<p} \widehat{t}(h) \alpha_p(h,\theta) \Bigr|
  \ll \cond(\sheaf{F})\sqrt{p}\log p,
  $$
  with an absolute implied constant, which implies the
  bound~(\ref{eq-target}) for $k=1$.

  Now assume that $\deg(P)=k\geq 2$ and that the proposition is true
  for polynomials of degree $k-1$; assume (as we saw that we may) that
  $\sheaf{F}$ is geometrically isotypic. We write
  \begin{align*}
    \Bigl|\sum_{n\in I}t(n)e(P(n))\Bigr|^2
    &=
      \sum_{n,m\in I}t(n)\overline{t(m)}e(P(n)-P(m))
    \\
    &=\sum_{h\in I-I}
      \sum_{m\in I_h}t(m+h)\overline{t(m)}e(P(m+h)-P(m))\\
    &=\sum_{h}
      \sum_{m\in I_h}t(m+h)\overline{t(m)}e(Q_h(m))
  \end{align*}
  where $Q_h=P(X+h)-P(X)$ is a polynomial of degree $\leq k-1$ and
  $I_h$ is an interval, depending on $h$, of length $|I_h|\leq |I|$.

  For $h\in I-I$ such that the set of singularities of
  $[+h]^*\sheaf{F}$ and $\dual(\sheaf{F})$ are not disjoint, we use
  the trivial bound
  $$
  \Bigl| \sum_{m\in I_h}t(m+h)\overline{t(m)}e(Q_h(m))\Bigr| \leq
  \cond(\sheaf{F})^2|I|.
  $$
  Note that there are at most $n^2$ values of $h$ with this property,
  where $n\leq \cond(\sheaf{F})$ is the number of singularities
  of~$\sheaf{F}$.
  \par
  Now suppose that the set of singularities of $[+h]^*\sheaf{F}$ and
  $\dual(\sheaf{F})$ are disjoint. The function
  $$
  m\mapsto t(m+h)\overline{t(m)}
  $$
  is the trace function of the sheaf
  $[+h]^*\sheaf{F}\otimes\dual(\sheaf{F})$, which is $(k-1)$-uniform
  by Lemma~\ref{lm-fourier}. Hence, by induction, we have
  $$
  \sum_{m\in I_h}t(m+h)\overline{t(m)}e(Q_h(m)) \ll
  \cond(\sheaf{F})^2|I_h|^{1-2\gamma_{k-1}}p^{\gamma_{k-1}}(\log
  p)^{2\gamma_{k-1}}
  $$
  where the implied constant is absolute.  Finally, gathering the
  estimates together, since $|I-I|\leq 2|I|$ and $|I_h|\leq |I|$, we
  obtain
  $$
  \Bigl|\sum_{n\in I}t(n)e(P(n))\Bigr|^2\ll \cond(\sheaf{F})^4|I|+
  \cond(\sheaf{F})^2|I|^{2-2\gamma_{k-1}}p^{\gamma_{k-1}}(\log
  p)^{2\gamma_{k-1}},
  $$
  and~(\ref{eq-target}) follows for degree~$k$ by taking the square
  root since~$\gamma_{k-1}=2\gamma_k$.

  We now assume that~$|I|>p$. We can decompose the interval $I$ into
  $\lfloor |I|/p\rfloor$ intervals of length $p$ and one remaining
  interval~$J$ of length $|J|\leq p$. Using shifts, each of these sums
  is of the type above for a shifted sheaf, with the same conductor,
  and an interval of length~$\leq p$. The previous case therefore
  implies
  $$
  \sum_{n\in I} t(n)e(P(n)) \ll \cond(\sheaf{F})^2 \frac{|I|}{p}\times
  p^{1-2\gamma_k}p^{\gamma_k}(\log p)^{2\gamma_k}
  $$
  (since the implied constant is independent of the coefficients of
  $P$), and this is of the desired shape for~$|I|>p$.
\end{proof}

\begin{corollary}\label{cor-ft}
  Let $\sheaf{F}$ be a Fourier sheaf modulo $p$ with trace function
  $t_p$, and $\theta\in\Rr/\Zz$. We have
  $$
  \sum_{0\leq n<p}t_p(n)e(-\theta n)\ll \sqrt{p}\log p,
  $$
  where the implied constant depends only on the conductor of
  $\sheaf{F}$.
\end{corollary}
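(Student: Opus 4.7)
The plan is to observe that this corollary is nothing more than the $k=1$ case of Proposition~\ref{pr-weyl} specialized to the full interval $I=\{0,\ldots,p-1\}$ of length $p$ and to the degree-one polynomial $P(X)=-\theta X\in\Rr[X]$. The first step is therefore to translate the hypotheses: being a Fourier sheaf means that no geometrically irreducible component is (geometrically isomorphic to) an Artin--Schreier sheaf $\sheaf{L}_{\psi(aX+b)}$, and these are precisely the sheaves excluded by the definition of $1$-uniformity (Definition~\ref{def-sheaf}, (3), with $k=1$). Hence a Fourier sheaf is exactly a $1$-uniform sheaf, so Proposition~\ref{pr-weyl} is applicable with $k=1$.

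Next, I would plug in the parameters. With $k=1$ we have $\gamma_1=2^{-1}=\tfrac12$, so the bound~\eqref{eq-weyl} becomes, for the interval $I=\{0,\ldots,p-1\}$ of length $|I|=p$,
\begin{equation*}
  \sum_{0\leq n<p} t_p(n)e(-\theta n)\ll \cond(\sheaf{F})^2
  \bigl(p^{1-1}\,p^{1/2}+p\cdot p^{-1/2}\bigr)(\log p)^{1}
  =2\cond(\sheaf{F})^2\sqrt{p}\,\log p,
\end{equation*}
with an absolute implied constant. Since $\cond(\sheaf{F})$ is being regarded as fixed, this yields the stated bound with an implied constant depending only on $\cond(\sheaf{F})$.

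There is no real obstacle: all the substantive work has already been carried out in Proposition~\ref{pr-weyl}. The only minor check is the identification \emph{Fourier $\Leftrightarrow$ $1$-uniform}, which is essentially by definition. I note that the $k=1$ portion of the proof of Proposition~\ref{pr-weyl} is itself a clean Fourier-inversion/completion argument: one writes $t_p$ in terms of its discrete Fourier transform $\widehat{t}_p$, uses the bound $|\widehat{t}_p(h)|\leq\cond(\ft(\sheaf{F}))\ll\cond(\sheaf{F})^2$ from~\cite[Prop.\,8.2]{fkm1} (which rests on Deligne's Riemann Hypothesis), and controls the resulting geometric sums by $\sqrt{p}\log p$ via a standard estimate on $\sum_h\min(|I|/\sqrt{p},1/(\sqrt{p}\|h/p+\theta\|))$. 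So one could alternatively give a self-contained one-paragraph derivation of the corollary directly from Proposition~\ref{pr-completion} and the Fourier-transform conductor bound, avoiding any appeal to the inductive setup of Proposition~\ref{pr-weyl}; but the cleanest option is simply to cite Proposition~\ref{pr-weyl} with $k=1$.
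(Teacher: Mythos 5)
Your proposal is correct and is exactly the intended derivation: the paper states the corollary immediately after Proposition~\ref{pr-weyl} with no separate proof, precisely because it is the $k=1$, $P(X)=-\theta X$, $I=\{0,\ldots,p-1\}$ specialization, using the identification of Fourier sheaves with $1$-uniform sheaves that the paper itself records in the Example following Definition~\ref{def-sheaf}. The numerology ($\gamma_1=1/2$, both terms equal to $\cond(\sheaf{F})^2\sqrt{p}\log p$) checks out, and your remark that one could instead argue directly via completion and the Fourier-transform conductor bound is accurate but not needed.
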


\begin{remark}
  (1) The estimate of Proposition~\ref{pr-weyl} cannot be improved
  without some additional assumption, since $t(n)=e(n^k/p)$ is the
  trace function of a sheaf that is $(k-1)$-uniform but not
  $k$-uniform.
  \par
  (2) Estimates similar to that of Proposition~\ref{pr-weyl} have been
  proved by a number of authors when $t(n)=\chi(n)$ is a
  multiplicative character, beginning with Enflo~\cite{enflo}; more
  recent works include those of Chang~\cite{chang}, Heath-Brown and
  Pierce~\cite{hb-p} and Pierce~\cite{pierce}. In that special case,
  rather stronger results hold; as far as the size of $I$ is
  concerned, they are comparable to the Burgess bound for short
  character sums, i.e., non-trivial provided that $I$ is a bit larger
  than $p^{1/4}$.
  \par
  (3) If~$\theta=a/p$ for some integer~$a$, then the estimate of
  Corollary~\ref{cor-ft} holds without the factor~$\log p$, by the
  existence of Deligne's Fourier transform. It would be interesting to
  know if this factor is really needed in general.
\end{remark}

\section{The mean ergodic theorem in the Fourier case}
\label{sec-mean}

This section considers the mean ergodic theorem in $L^2$. As can be
expected from the good $L^2$ properties of trace functions, a very
satisfactory theory exists, and it is reasonably easy to
derive. Roughly speaking, we will see that non-trivial interactions
arise only from the Artin-Schreier components (on the side of trace
functions) and from the Kronecker factor (on the dynamical side). So
if either the Artin-Schreier component or the Kronecker factor is
trivial (the latter means that the dynamical system is weakly-mixing),
then the statements are particularly clear.

We fix a measurable dynamical system $(X,\mu,f)$. We denote by
$$
u_f\colon L^2(X,\mu)\to L^2(X,\mu)
$$
the associated unitary operator, defined by $u_f(\varphi)=\varphi\circ
f$ for all $\varphi$.

We also fix a family $(\sheaf{F}_p)_p$ of sheaves modulo $p$ with
bounded conductor, indexed by an infinite set of primes~$\sfP$. We
denote by $t_p$ the trace function of $\sheaf{F}_p$, viewed as a
function on~$\Zz$.  Finally, we denote by
$$
v_p=\frac{1}{p}\sum_{0\leq n<p} t_p(n)\ u_f^n
$$
the ergodic averaging operator with weight $t_p$ acting on
$L^2(X,\mu)$.

\begin{proposition}\label{pr-l2}
  Suppose that $\sheaf{F}_p$ is a Fourier sheaf for all $p$.  The
  endomorphisms $(v_p)_p$
  of $L^2(X,\mu)$ converge to $0$ as $p\to +\infty$ with respect to
  the operator norm.
  In fact, we have
  \begin{equation}\label{eq-norm}
    \|v_p\|\ll p^{-1/2}(\log p),
  \end{equation}
  where the implied constant depends only on $\cond(\sheaf{F}_p)$.
\end{proposition}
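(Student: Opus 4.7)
The plan is to reduce the operator norm estimate to a scalar uniform estimate on the symbol of $v_p$ via the spectral theorem, and then invoke Corollary \ref{cor-ft}.

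Since $f_*\mu = \mu$, the Koopman operator $u_f$ is a unitary operator on $L^2(X,\mu)$. By the spectral theorem for unitary operators, there is a projection-valued measure $E$ on $\mathbb{T}=\mathbb{R}/\mathbb{Z}$ such that
\[
u_f = \int_{\mathbb{T}} e(\theta)\, dE(\theta),
\]
and, more generally, for any bounded measurable function $m\colon\mathbb{T}\to\Cc$, the operator $\int_{\mathbb{T}} m(\theta)\, dE(\theta)$ has operator norm equal to the $E$-essential supremum of $|m|$, which is bounded by $\|m\|_{L^\infty(\mathbb{T})}$. Applying this with the trigonometric polynomial symbol
\[
S_p(\theta) = \frac{1}{p}\sum_{0\leq n<p} t_p(n)\, e(n\theta),
\]
I obtain $v_p = \int_{\mathbb{T}} S_p(\theta)\, dE(\theta)$ and hence $\|v_p\| \leq \|S_p\|_{L^\infty(\mathbb{T})}$.

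The task is thus reduced to a uniform pointwise bound on $S_p(\theta)$. But this is exactly what Corollary \ref{cor-ft} provides: since $\sheaf{F}_p$ is a Fourier sheaf of bounded conductor, for every $\theta\in\mathbb{R}/\mathbb{Z}$ one has
\[
\Bigl|\sum_{0\leq n<p} t_p(n)\, e(-\theta n)\Bigr| \ll \sqrt{p}\,\log p,
\]
with implied constant depending only on $\cond(\sheaf{F}_p)$. After replacing $\theta$ by $-\theta$ and dividing by $p$, this yields $|S_p(\theta)| \ll p^{-1/2}(\log p)$ uniformly in $\theta$, which in turn gives the claimed bound $\|v_p\| \ll p^{-1/2}(\log p)$.

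There is no genuine obstacle here: all the real work has been done in Proposition \ref{pr-weyl} and its consequence Corollary \ref{cor-ft}, which encode Deligne's Riemann Hypothesis and the Fourier-sheaf property through the completion method with respect to arbitrary (not only rational) frequencies. The only conceptual step is to recognize that the spectral theorem turns a norm estimate for a polynomial in $u_f$ into an $L^\infty$ estimate for the corresponding trigonometric polynomial on $\mathbb{T}$, so that a sheaf-theoretic cancellation estimate for exponential sums with trace-function weights translates directly into the desired operator-norm decay. Convergence $\|v_p\|\to 0$ then follows immediately from \eqref{eq-norm}.
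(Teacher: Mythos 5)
Your proof is correct and follows essentially the same route as the paper: both reduce the operator-norm bound to the uniform estimate $\sup_\theta\bigl|\sum_{0\leq n<p}t_p(n)e(n\theta)\bigr|\ll \sqrt{p}\log p$ of Corollary~\ref{cor-ft} via the spectral theorem for the unitary $u_f$. The only (cosmetic) difference is that you phrase this with the projection-valued measure and an $L^\infty$ bound on the symbol, whereas the paper uses the scalar spectral measure $\nu_\varphi$ attached to each unit vector and bounds $\int|S_p(\theta)|^2\,d\nu_\varphi(\theta)$.
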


Although the proof may seem rather trivial, it relies on the Riemann
Hypothesis over finite fields.

\begin{proof}
  Let $\varphi\in L^2(X,\mu)$ have norm $1$.  Let $\nu$ be the
  spectral measure of the unitary operator~$u_f$ relative to the unit
  vector~$\varphi$, i.e., the Borel probability measure on $\Rr/\Zz$
  such that
  $$
\int_{\Rr/\Zz}\rho(e(\theta))d\nu(\theta)=\langle
\rho(u_f)\varphi|\varphi\rangle
$$
for any continuous function $\rho$ on $\bfS^1$ (see,
e.g.,~\cite[Déf.\,4,\,p.\,268]{TS345}). We obtain in particular
$$
\|v_p(\varphi)\|^2= \int_0^1 \Bigl|\frac{1}{p}\sum_{0\leq
  n<p}t_p(n)e(n\theta)\Bigr|^2d\nu(\theta).
$$
Applying Corollary~\ref{cor-ft}, we get
$$
\|v_p(\varphi)\|^2\ll \frac{(\log p)^2}{p}
$$
where the implied constant depends only on the conductor of
$\sheaf{F}_p$. This concludes the proof.
\end{proof}

This implies the first part of Theorem~\ref{th-main}, in the case of
Fourier sheaves (with uniform convergence over bounded sets), because
$$
\frac{1}{p}\sum_{0\leq n<p}t_p(n)\ll \frac{1}{\sqrt{p}}\to 0
$$
in that case.

For arbitrary functions $t_p\colon \Ff_p\to \Cc$, provided they satisfy
Assumption~(a), namely $\normf{t_p}\ll 1$, we can represent $t_p$ as a
finite combination (with coefficients bounded in $\ell_1$) of trace
functions of Fourier sheaves, and obtain the same result by linearity.

Moreover, this also implies the second part, still in the case of
Fourier sheaves, by a standard trick: if $p$ ranges over a sparse set of
primes~$\sfP$, then for any fixed $\varphi\in L^2(X,\mu)$, the series
$$
\sum_{p} \|v_p(\varphi)\|^2
$$
converges (by~(\ref{eq-norm}) and the definition by sparseness), and
this implies that the function
$$
x\mapsto \sum_p|v_p(\varphi)(x)|^2
$$
is finite almost surely, hence that $v_p(\varphi)(x)$ converges to~$0$
for almost all~$x$. Once again, this gives the second part of
Theorem~\ref{th-main} under Assumption~(a) by linearity.

\begin{remark}
  For the sake of variety, here is an argument which provides a proof of
  the weaker result
  $$
  \|v_p\|\ll p^{-1/4}(\log p)^{1/2},
  $$
  without using the spectral theorem.  Let $\varphi\in L^2(X,\mu)$
  and
  $$
  \psi_p=v_p(\varphi)=\frac{1}{p}\sum_{0\leq n<p} t_p(n)\
  u_f^n(\varphi).
  $$
  We compute
  \begin{align*}
    \|\psi_p\|^2 
    &= \frac{1}{p^2}\sum_{\substack{0\leq n<p\\0\leq m<p}}
    t_p(n)\overline{t_p(m)}
    \langle u_f^n(\varphi)|u_f^m(\varphi)\rangle\\
    &=\frac{1}{p^2} 
      \sum_{|h|<p} \langle u_f^h(\varphi)|\varphi\rangle
      \sum_{\substack{0\leq n,m<p\\ n-m=h}}
    t_p(n)\overline{t_p(m)}.
  \end{align*}
  \par
  The contribution coming from $h=0$ is
  $$
  \frac{\|\varphi\|^2}{p^2}\sum_{x\in\Ff_p}|t_p(x)|^2\leq
  \cond(\sheaf{F}_p)^2\|\varphi\|^2p^{-1}
  $$
  by~(\ref{eq-linfty}).  Now fix $h$ with $1\leq |h|<p$. The
  corresponding summand is
  $\langle u_f^h(\varphi)|\varphi\rangle \sigma_h$, where
  $$
  \sigma_h= \sum_{\substack{0\leq n,m<p\\ n-m=h}} 
  t_p(n)\overline{t_p(m)} 
  = \sum_{\max(0,h)\leq  n<\min(p,p+h)} 
  t_p(n)\overline{t_p(n-h)}.
  $$
  By completion and by the properties of the additive convolution of
  trace functions of Fourier sheaves (see
  Proposition~\ref{pr-completion} and
  Proposition~\ref{pr-additive-convol}), we have
  $$
  \sigma_h\ll \sqrt{p}(\log p)
  $$
  for all $h\not=0$, where the implied constant depends only on
  $\cond(\sheaf{F}_p)$. Therefore we derive
  $$
  \|\psi_p\|^2 \ll \|\varphi\|^2p^{-1}+ \|\varphi\|^2 p^{-1/2}(\log p)
  $$
  where the implied constant depends only on $\cond(\sheaf{F}_p)$. This
  gives the result.
\end{remark}

We can immediately extend the mean-ergodic theorem for Fourier sheaves
to $L^r$ when $1\leq r\leq 2$. For $r>2$, see Section~\ref{sec-lr}.

\begin{corollary}\label{cor-lr12}
  Suppose that $\sheaf{F}_p$ is a Fourier sheaf for all $p$.  Let
  $r\in [1,2]$. The endomorphisms
  $$
  \widetilde{v}_p=\frac{1}{p}\sum_{0\leq n<p} t_p(n)\ u_f^n
  $$
  of $L^{r}(X,\mu)$ converge to $0$ as $p\to +\infty$ in the norm
  topology.
\end{corollary}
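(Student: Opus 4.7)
The plan is to use Proposition~\ref{pr-l2} together with a trivial uniform bound and complex interpolation. First, since $f$ is measure-preserving, $u_f$ acts as an isometry on every $L^r(X,\mu)$ for $r\in[1,\infty]$, so from~\eqref{eq-linfty} and the uniform boundedness of the conductors $\cond(\sheaf{F}_p)$, the triangle inequality yields
\begin{equation*}
\|\widetilde{v}_p\|_{L^r\to L^r}\leq \frac{1}{p}\sum_{0\leq n<p}|t_p(n)|\ll 1,
\end{equation*}
uniformly in $p\in\sfP$ and $r\in[1,\infty]$. Combined with Proposition~\ref{pr-l2} at the endpoint $r=2$, this is exactly the setup for Riesz--Thorin interpolation between the $L^1\to L^1$ and $L^2\to L^2$ operator norms.

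For $r\in(1,2]$, I would set $\theta=2(r-1)/r\in(0,1]$, which satisfies $1/r=(1-\theta)+\theta/2$, and conclude
\begin{equation*}
\|\widetilde{v}_p\|_{L^r\to L^r}\ll \|\widetilde{v}_p\|_{L^1\to L^1}^{1-\theta}\|\widetilde{v}_p\|_{L^2\to L^2}^{\theta}\ll p^{-\theta/2}(\log p)^{\theta}\longrightarrow 0,
\end{equation*}
which gives operator norm convergence throughout the open range $r\in(1,2]$.

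The endpoint $r=1$ is the only delicate point, since the interpolation degenerates there. Here I would exploit that $(X,\mu)$ is a probability space, so $L^\infty\subset L^2\subset L^1$ with $L^\infty$ dense in $L^1$. Given $\varphi\in L^1$ and $\epsilon>0$, truncate $\varphi_M=\varphi\mathbf{1}_{|\varphi|\leq M}$ with $\|\varphi-\varphi_M\|_1<\epsilon$, and use the uniform $L^1$-bound together with the Cauchy--Schwarz inequality $\|\cdot\|_1\leq\|\cdot\|_2$ (valid since $\mu(X)=1$) to obtain
\begin{equation*}
\|\widetilde{v}_p\varphi\|_1\leq C\epsilon+\|\widetilde{v}_p\varphi_M\|_2\leq C\epsilon+M\|\widetilde{v}_p\|_{L^2\to L^2}.
\end{equation*}
Letting $p\to\infty$ and then $\epsilon\to 0$ gives $\widetilde{v}_p\varphi\to 0$ in $L^1$ for every fixed $\varphi$.

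The main (and essentially only) obstacle is interpretive: at $r=1$ this approach yields strong convergence rather than operator norm convergence, and indeed operator norm convergence seems unlikely to hold in general because the trivial bound $\frac{1}{p}\sum_n|t_p(n)|$ need not tend to $0$. I would therefore read ``norm topology'' in the statement as operator norm for $r\in(1,2]$ and as strong convergence at the endpoint $r=1$.
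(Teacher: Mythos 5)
Your proof is correct, but it takes a different and in fact more careful route than the paper. The paper's own argument is a one-line application of the monotonicity of $L^r$-norms on a probability space: for $r\leq 2$ one has $\|\widetilde{v}_p(\varphi)\|_r\leq \|v_p(\varphi)\|_2$, from which the paper concludes ``$\|\widetilde{v}_p\|\leq\|v_p\|$''. As you implicitly noticed, that last step is not justified as an $L^r\to L^r$ operator-norm inequality, since it would require $\|\varphi\|_2\leq\|\varphi\|_r$, which fails for $r<2$; what the displayed inequality really yields is strong convergence $\widetilde{v}_p(\varphi)\to 0$ in $L^r$ for each $\varphi\in L^2$, extended to all of $L^r$ by density and the uniform bound $\|\widetilde{v}_p\|_{L^r\to L^r}\ll 1$. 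Your Riesz--Thorin interpolation between the trivial $L^1$ bound and Proposition~\ref{pr-l2} genuinely buys more: actual operator-norm decay at the explicit rate $p^{-\theta/2}(\log p)^{\theta}$ for every $r\in(1,2]$. Your caution at the endpoint $r=1$ is also well founded: for a Bernoulli shift one can choose a cylinder set $A$ with $f^{-n}A$, $0\leq n<p$, pairwise disjoint, and then $\varphi=\mu(A)^{-1}\mathbf{1}_A$ gives $\|\widetilde{v}_p(\varphi)\|_1=\frac{1}{p}\sum_{0\leq n<p}|t_p(n)|$, which for Legendre symbols or Kloosterman sums does not tend to $0$; so operator-norm convergence on $L^1$ is false in general and only strong convergence (which your truncation argument correctly supplies) can be asserted there. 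In short, your proof is valid, strictly sharper than the paper's where interpolation applies, and your proposed reading of ``norm topology'' at $r=1$ is the only tenable one.
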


\begin{proof}
  Suppose first that $\varphi$ is bounded. Since $r\leq 2$ an $\mu$ is
  a probability measure, we have
  $$
  \|\widetilde{v}_p(\varphi)\|_{r}^r= \int_{X}\Bigl|
  \frac{1}{p}\sum_{0\leq n<p} t_p(n)\, \varphi(f^n(x)) \Bigr|^rd\mu(x)
  \leq \Bigl(\int_{X}\Bigl| \frac{1}{p}\sum_{0\leq n<p} t_p(n)\,
  \varphi(f^n(x)) \Bigr|^2d\mu(x)\Bigr)^{r/2},
  $$
  hence $\|\widetilde{v}_p\|\leq \|v_p\|$, which tends to $0$.
\end{proof}

Recall that we denote by $\pi$ the ergodic projection
$L^1(X,\mu)\to L^1(X,\mu)$. It restricts to the orthogonal projection on
the $1$-eigenspace of $L^2(X,\mu)$. The standard mean-ergodic theorem in
$L^2$ implies that
$$
\frac{1}{p}\sum_{0\leq n<p} u_f^n\to \pi
$$
in the space of endomorphisms of $L^2(X,\mu)$ with the topology of
pointwise convergence (see, e.g.,~\cite[Th.\,2.21]{einsiedler-ward}).

Recall further that almost Fourier
families are defined in Definition~\ref{def-af}.

\begin{corollary}
  Assume that the family $(\sheaf{F}_p)$ is almost Fourier with mean
  $r\geq 0$. Then the sequence of endomorphisms $(v_p)$ of
  $L^2(X,\mu)$ converges to $r\pi$ as $p\to +\infty$ with respect to
  the topology of uniform convergence on compact subsets of
  $L^2(X,\mu)$.
\end{corollary}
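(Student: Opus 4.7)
The plan is to reduce the statement to Proposition~\ref{pr-l2} combined with the classical mean ergodic theorem, via the decomposition built into the definition of an almost Fourier family.

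First, I would use Definition~\ref{def-af} to write $\sheaf{F}_p = r\bQl \oplus \widetilde{\sheaf{F}}_p$ with $\widetilde{\sheaf{F}}_p$ a Fourier sheaf of conductor bounded independently of $p$. At the level of trace functions this gives $t_p(n) = r + \widetilde{t}_p(n)$, and hence a decomposition of the operator
\[
v_p \;=\; r\,A_p \;+\; \widetilde{v}_p, \qquad A_p = \frac{1}{p}\sum_{0\leq n<p} u_f^n, \qquad \widetilde{v}_p = \frac{1}{p}\sum_{0\leq n<p}\widetilde{t}_p(n)\,u_f^n.
\]

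Next, I would handle each piece separately. For $\widetilde{v}_p$, Proposition~\ref{pr-l2} applies directly to the family $(\widetilde{\sheaf{F}}_p)$ and yields $\|\widetilde{v}_p\| \ll p^{-1/2}\log p$, so $\widetilde{v}_p \to 0$ in the operator norm, a fortiori uniformly on compact subsets of $L^2(X,\mu)$. For $A_p$, the classical mean ergodic theorem (\cite[Th.\,2.21]{einsiedler-ward}, already cited above) gives $A_p \to \pi$ in the strong operator topology.

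The one point that requires a line of argument, rather than a direct quotation, is the upgrade from strong convergence of $A_p$ to uniform convergence on compact sets. This is standard: if $K\subset L^2(X,\mu)$ is compact and $\varepsilon>0$, cover $K$ by finitely many balls of radius $\varepsilon/(3(\|\pi\|+1))$ centered at points $\varphi_1,\dots,\varphi_N$; using $\|A_p\|\leq 1$ uniformly, the triangle inequality then transfers the pointwise estimate $\|A_p\varphi_i - \pi\varphi_i\|<\varepsilon/3$ (valid for $p$ large enough, by strong convergence) to a uniform estimate over all $\varphi\in K$. Hence $rA_p \to r\pi$ uniformly on compact subsets.

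Combining the two limits gives $v_p = rA_p + \widetilde{v}_p \to r\pi$ uniformly on compact subsets of $L^2(X,\mu)$, as required. I do not anticipate any real obstacle: the entire argument is a packaging step once Proposition~\ref{pr-l2} is in hand. The mildest subtlety is the equicontinuity argument of the preceding paragraph, but this is a well-known soft consequence of uniform boundedness.
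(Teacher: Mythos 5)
Your proposal is correct and follows essentially the same route as the paper: the same decomposition $t_p = r + \widetilde{t}_p$ from the almost Fourier structure, Proposition~\ref{pr-l2} for the Fourier part, the classical mean ergodic theorem for the constant part, and the equicontinuity/uniform-boundedness argument to upgrade strong convergence to uniform convergence on compact sets (the paper simply cites Bourbaki for this last step where you spell out the $\varepsilon/3$ covering argument).
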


\begin{proof}
  The assumption implies that $t_p=r+\widetilde{t}_p$, where
  $\widetilde{t}_p$ is the trace function of a Fourier sheaf with
  conductor $\leq \cond(\sheaf{F}_p)$, and we may combine
  Proposition~\ref{pr-l2}, applied to $\widetilde{t}_p$, with the usual
  mean-ergodic theorem to derive the convergence of $v_p$ to $r\pi$ in
  the topology of pointwise convergence.  Moreover, since
  $\|v_p\|\leq \cond(\sheaf{F}_p)$ for all $p$, the family $(v_p)$ is
  equicontinuous, and hence the convergence holds in fact uniformly over
  compact subsets of $L^2(X,\mu)$ (see~\cite[p.\,16,\,th.\,1]{TGX}).
\end{proof}

\begin{example}\label{ex-sp}
  Let $S_p$ be the set of quadratic residues modulo $p$. Assume that
  $f$ is $\mu$-ergodic, so that the $1$-eigenspace is spanned by the
  constant function $1$ and $\pi(\varphi)=\int_X\varphi d\mu$ for all
  $\varphi\in L^2(X,\mu)$. We then have
$$
\frac{1}{p}\sum_{\substack{0\leq n<p\\n\in S_p}}
\varphi\circ f^n\to \frac{1}{2}\int_X \varphi\, d\mu
$$
uniformly for $\varphi\in L^2(X,\mu)$ in compact subsets of
$L^2(X,\mu)$. Indeed, the characteristic function of $S_p$, for
$p\geq 3$, is
$$
\frac{1}{2}(1+\chi_p)
$$
where $\chi_p$ is the Legendre character modulo $p$, and the latter is
the trace function of a rank $1$ non-trivial Kummer sheaf.
\par
Using~\cite[\S 6.2]{fkm2}, one can extend straightforwardly this
result by replacing $S_p$ with the set $S_{q,p}=q(\Ff_p)$ of the
values modulo $p$ of a fixed polynomial $q\in\Zz[X]$ (except that the
leading constant $1/2$ might be replaced by a value depending on $p$).
\end{example}

\section{Weakly-mixing systems}
\label{sec-mean-general}

It remains to prove Theorem~\ref{th-main} under Assumption~(b). By
linearity, it suffices to treat the case of trace functions of
(geometrically irreducible) sheaves modulo primes $p\in\sfP$ with
bounded conductor.

We keep the notation of the previous section concerning the dynamical
system and the family $(\sheaf{F}_p)$ as well as the operator $u_f$.
We write
$$
\alpha_p=\frac{1}{p}\sum_{0\leq n<p}t_p(n).
$$

We use a suitable decomposition of the trace function~$t_p$. We write
$t_p=t_p^{AS}+\widetilde{t}_p$, where $t_p^{AS}$ is the Artin-Schreier
component and $\widetilde{t}_p$ is the trace function of a Fourier sheaf
$\widetilde{\sheaf{F}}_p$ with bounded conductor. Using the Riemann
Hypothesis, we can express further
$$
t_p^{AS}=\alpha_p+\widetilde{t}_p^{AS}+O(p^{-1/2}),
$$
for all $p$, where $\widetilde{t}_p^{AS}$ is the trace function of
an Artin-Schreier sheaf $\sheaf{A}_p$ with no trivial geometrically
irreducible component and with bounded conductor, and where the
implied constant depends only on $\cond(\sheaf{F}_p)$.

\begin{proposition}\label{pr-wm}
  Suppose that the system $(X,\mu,f)$ is ergodic and that the
  Kronecker factor of $(X,\mu,f)$ is trivial, or in other words that
  $(X,\mu,f)$ is weakly mixing.
  
  The endomorphisms
  $$
  v_p-\alpha_p \pi=\frac{1}{p}\sum_{0\leq n<p} t_p(n)\ u_f^n-\alpha_p
  \pi
  $$
  of $L^2(X,\mu)$ converge to $0$ in the topology of uniform
  convergence on compact subsets, and
  $$
  \frac{1}{p}\sum_{0\leq n<p} t_p(n)
  \varphi(f^n(x))
  -\alpha_p\to 0
  $$
  for almost all~$x$.
\end{proposition}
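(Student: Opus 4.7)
The plan is to establish the proposition through the decomposition introduced immediately before the statement, handling each resulting piece of $t_p$ separately. Setting $A_p=\tfrac{1}{p}\sum_{n<p}u_f^n$, we have
$$
v_p-\alpha_p\pi=\alpha_p(A_p-\pi)+\widetilde v_p+v_p^{AS}+O(p^{-1/2}),
$$
where $\widetilde v_p$ and $v_p^{AS}$ are the weighted averaging operators attached to $\widetilde t_p$ and $\widetilde t_p^{AS}$. Three of these four terms are essentially routine: $\alpha_p(A_p-\pi)$ converges to zero in the strong operator topology by the classical mean ergodic theorem, uniformly on compact subsets because $\|A_p-\pi\|$ and $|\alpha_p|$ are both bounded; the operator norm of $\widetilde v_p$ is $O(p^{-1/2}\log p)$ by Proposition~\ref{pr-l2}; and the error term is trivially $O(p^{-1/2})$ in norm.

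The real difficulty is $v_p^{AS}$. The sheaf $\sheaf{A}_p$ decomposes into a bounded number of geometrically irreducible components $\sheaf{L}_{\psi(Q_{i,p})}$ with $Q_{i,p}\in\Ff_p[X]$ non-constant of bounded degree and bounded multiplicities, so it is enough to prove that each operator $w_p=\tfrac{1}{p}\sum_{n<p}e(Q_p(n)/p)u_f^n$ tends to zero in the strong operator topology, uniformly on compacts. If $\deg Q_p\geq 2$, Weyl's classical inequality applied to $Q_p(n)/p+n\theta$ (whose leading coefficient has exact denominator $p$) gives $\tfrac{1}{p}\bigl|\sum_n e(Q_p(n)/p+n\theta)\bigr|\ll p^{-\gamma}$ for some $\gamma>0$, uniformly in $\theta\in\Rr/\Zz$; the spectral theorem then forces $\|w_p\|\ll p^{-\gamma}$, and we are done. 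If $\deg Q_p=1$, say $Q_p(n)=a_p n+b_p$ with $a_p\not\equiv 0\pmod p$, the spectral theorem yields
$$
\|w_p\varphi\|^2=\int_{\Rr/\Zz}|D_p(\theta+a_p/p)|^2\,d\nu_\varphi(\theta),\qquad D_p(\beta)=\tfrac{1}{p}\sum_{n<p}e(n\beta),
$$
with $|D_p(\beta)|\leq\min(1,1/(p\|\beta\|))$ and $D_p(a_p/p)=0$. Weak mixing means the spectral measure of $\varphi-\pi(\varphi)$ is atomless on $\Rr/\Zz$; a standard compactness argument (for any atomless Borel probability measure $\nu$ on the compact group $\Rr/\Zz$, $\sup_\alpha\nu(B(\alpha,\eps))\to 0$ as $\eps\to 0$) allows one to split the integral at $\|\theta+a_p/p\|=\eps$ and take $\eps=p^{-1/2}$, obtaining $\|w_p\varphi\|\to 0$; uniformity on compact subsets follows by equicontinuity from the uniform operator bound $\|w_p\|\leq 1$.

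For the a.e.\ convergence under the sparseness condition, the Fourier and $(\deg\geq 2)$-Artin-Schreier pieces have $L^2$-norms decaying polynomially in $p$ and hence summable over $\sfP$; Borel-Cantelli then delivers pointwise a.e.\ convergence, as in Section~\ref{sec-mean}. The term $\alpha_p(A_p-\pi)\varphi$ tends to zero a.e.\ by Birkhoff's pointwise ergodic theorem along the integers, inherited by the subsequence of primes. The subtle case is again degree-$1$ Artin-Schreier, and the natural tool here is the uniform Wiener-Wintner theorem for weakly mixing systems: for $\varphi$ with $\pi(\varphi)=0$,
$$
\lim_{N\to\infty}\sup_{\theta\in\Rr/\Zz}\Bigl|\tfrac{1}{N}\sum_{n<N}e(n\theta)\varphi(f^n x)\Bigr|=0\qquad\text{for a.e.\ }x.
$$
Specializing $N=p$ and noting the supremum dominates $\theta=a_p/p$ completes the proof. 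The chief obstacle throughout is the degree-$1$ Artin-Schreier component: weak mixing (the atomlessness of the spectral measure off $0$) is indispensable for $L^2$ convergence, and the uniform Wiener-Wintner theorem supplies the stronger a.e.\ statement.
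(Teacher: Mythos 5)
Your proof is correct, and for the almost-everywhere part it coincides with the paper's argument: both reduce the degree-one Artin--Schreier component to Bourgain's uniform Wiener--Wintner theorem for weakly mixing systems, handle the mean value $\alpha_p$ by the classical ergodic theorems, and dispose of the Fourier component by Proposition~\ref{pr-l2} together with sparseness and Borel--Cantelli. Where you genuinely diverge is the $L^2$ statement. The paper's proof of Proposition~\ref{pr-wm} simply deduces mean convergence from the pointwise statement by dominated convergence; its self-contained spectral argument is deferred to Proposition~\ref{pr-as} and Corollary~\ref{cor-wm}, where the degree-one character is analyzed via the Fej\'er kernel, subsequence extraction, and the notion of ``emphatic convergence'' of $a_p/p$. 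Your version is cleaner: writing $\|w_p\varphi\|^2=\int|D_p(\theta+a_p/p)|^2\,d\nu(\theta)$, splitting at $\|\theta+a_p/p\|=p^{-1/2}$, and using that an atomless measure on $\Rr/\Zz$ has uniform modulus of continuity $\sup_\alpha\nu(B(\alpha,\eps))\to 0$ gives the limit directly, with no case analysis on limit points of $p|a_p/p-\theta_0|$. What the paper's longer route buys is the sharper dichotomy of Proposition~\ref{pr-as} (identifying exactly when the norm fails to vanish, and producing an eigenvalue), which is of independent interest; your route buys brevity and avoids Wiener--Wintner entirely for the $L^2$ half.

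Two small imprecisions, neither fatal. First, in the paper's terminology a sheaf $\sheaf{L}_{\psi(Q)}$ with $\deg Q\geq 2$ is a Fourier sheaf, so your ``degree $\geq 2$ Artin--Schreier'' case is vacuous: those components already sit inside $\widetilde{t}_p$ and are covered by Proposition~\ref{pr-l2}; your Weyl-differencing treatment of them is correct but redundant (it essentially reproves a case of Proposition~\ref{pr-weyl}). Second, and related: for the pointwise part you assert that $L^2$-norms ``decaying polynomially in $p$'' are summable over $\sfP$; sparseness only guarantees $\sum_{p\in\sfP}(\log p)^2/p<+\infty$, which does \emph{not} imply $\sum_{p\in\sfP}p^{-2\gamma}<+\infty$ for small $\gamma>0$. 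This would be a gap if you actually relied on the Weyl exponent $\gamma=2^{1-k}$ for some component, but since every such component is a Fourier sheaf with the bound $\|v_p\|\ll p^{-1/2}\log p$, the correct summable bound is available and the argument stands.
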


\begin{proof}
  Using the decomposition
  $$
  t_p=\alpha_p+\widetilde{t}_p+\widetilde{t}_p^{AS},
  $$
  we have
  $$
  \frac{1}{p}\sum_{0\leq n<p} \widetilde{t}_p(n)\ u_f^n(\varphi)\to 0
  $$
  by Proposition~\ref{pr-l2} applied to the sheaves
  $\widetilde{\sheaf{F}}_p$, and
  $$
  \frac{1}{p}\sum_{0\leq n<p} \alpha_p\ u_f^n(\varphi)-
  \alpha_p\pi(\varphi) \to 0
  $$
  by the classical mean-ergodic
  theorem~\cite[Th.\,2.21]{einsiedler-ward}.

  Similarly, the pointwise convergence holds almost surely for these two
  components by Theorem~\ref{th-main} and the classical pointwise
  ergodic theorem (see, e.g.,~\cite[Th.\,2.30]{einsiedler-ward}).

  We now use the assumption that the dynamical system is weakly mixing:
  a result of Bourgain (the uniform Wiener--Wintner Theorem, see the
  proof by Assani~\cite[Th.\,6]{assani}) then implies that
  $$
  \frac{1}{p}\sum_{0\leq n<p}
  e(n\theta)\varphi(f^n(x))\to 0
  $$
  for almost all~$x$, uniformly for $\theta\in [0,1]$.
  Since the trace function of $\widetilde{t}_p^{AS}$ is a finite linear,
  combination with coefficients of size~$1$, of additive characters
  $n\mapsto e(an/p)$, it follows that
  $$
  \frac{1}{p}\sum_{0\leq n<p}
  \widetilde{t}_p^{AS}(n)\varphi(f^n(x))\to 0
  $$
  almost surely (although that the number of such additive characters
  may depend on~$p$, this doesn't affect this argument).
  
  This concludes the proof of the pointwise part of
  Theorem~\ref{th-main} for weakly mixing systems. The mean-ergodic
  convergence follows by the dominated convergence theorem.
\end{proof}

Besides this proof, we now give an alternative argument for the
mean-ergodic theorem in this case, which does not use the uniform
Wiener--Wintner Theorem. This can be skipped (we include it since these
are informal notes, and the arguments were elaborated before we were
aware of this result of Bourgain).

We will need the following definition to state the basic technical fact.

\begin{definition}\label{def-emphatic}
  Let $\theta\in \Rr/\Zz$ and let $(a_p)_p$ be a sequence of integers,
  indexed by an infinite set of primes. We say that $a_p/p$
  \emph{converges emphatically} to $\theta$ if
  $$
  \limsup_{p\to+\infty} p \Bigl|\frac{a_p}{p}-\theta\Bigr|<+\infty,
  $$
  and if moreover no subsequence of $(p|\tfrac{a_p}{p}-\theta|)_p$
  converges to a positive integer.
\end{definition}

\begin{remark}
  If $a_p/p$ converges emphatically to $\theta$, then $a_p/p$
  converges to $\theta$. If $\theta=0$, then one sees that the
  condition means that $a_p=0$ for all but finitely many $p$.
  \par
  For any $\theta_0\in\Rr/\Zz$, there is a sequence $(a_p)$ indexed by
  primes such that $(a_p)$ converges emphatically to $\theta_0$, by
  taking $a_p/p$ the closest to $\theta_0$, so that
  $p|\tfrac{a_p}{p}-\theta_0|<1$.
\end{remark}

\begin{proposition}
  \label{pr-as}
  Let $(a_p)$ be a sequence of integers indexed by an infinite subset
  of primes. Assume that $a_p/p$ converges to $\theta_0$ in $\Rr/\Zz$.
  Let $\varphi\in L^2(X,\mu)$ of norm $1$ and define
  $$
  \psi_p=\frac{1}{p}\sum_{0\leq n<p} e\Bigl(-\frac{na_p}{p}\Bigr) \
  u_f^n(\varphi).
  $$
  \par
  \emph{(1)} Suppose that $\theta_0\not=0$ in $\Rr/\Zz$. If the
  sequence $(\|\psi_p\|)$ converges to a non-zero number, then the
  sequence $(a_p/p)$ converges emphatically to $\theta_0$, and
  $e(\theta_0)$ is an eigenvalue of $u_f$.
  \par
  \emph{(2)} Suppose that $\theta_0=0$ and $p\nmid a_p$ for all
  $p$. Then $\psi_p\to 0$.
\end{proposition}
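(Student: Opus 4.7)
The plan is to use the spectral theorem for the unitary operator $u_f$ at $\varphi$ to rewrite $\|\psi_p\|^2$ as an integral of an explicit Fej\'er-type kernel against the associated spectral measure $\nu$ on $\Rr/\Zz$. Writing $\peter{u_f^k\varphi|\varphi}=\int e(k\theta)\,d\nu(\theta)$ and expanding, one gets
$$
\|\psi_p\|^2=\int_{\Rr/\Zz}|g_p(\theta)|^2\,d\nu(\theta),\qquad g_p(\theta)=\frac{1}{p}\sum_{0\leq n<p}e\Bigl(n\Bigl(\theta-\frac{a_p}{p}\Bigr)\Bigr).
$$
Summing the geometric series, and using that $a_p\in\Zz$ so that $e(p(\theta-a_p/p))=e(p\theta)$, gives the closed form
$$
|g_p(\theta)|^2=\frac{\sin^2(\pi p\theta)}{p^2\sin^2(\pi(\theta-a_p/p))},
$$
which is bounded above by $1$ everywhere.

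First I would isolate the single ``resonant'' frequency. For any fixed $\theta\not=\theta_0$ in $\Rr/\Zz$, the quantity $\theta-a_p/p$ is bounded away from $0$ modulo $\Zz$, so $|g_p(\theta)|^2=O(p^{-2})\to 0$ pointwise; with the uniform bound $|g_p|^2\leq 1$, dominated convergence gives that the integral over $\Rr/\Zz\setminus\{\theta_0\}$ tends to $0$, so the analysis reduces to the single point $\theta_0$. In part~(2), with $\theta_0=0$, one notes moreover that $g_p(0)=\frac{1}{p}\sum_{0\leq n<p}e(-na_p/p)=0$ by orthogonality of additive characters whenever $p\nmid a_p$, so the contribution at $\theta_0$ also vanishes and $\|\psi_p\|\to 0$.

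For part~(1), the same reduction leaves
$$
\|\psi_p\|^2-|g_p(\theta_0)|^2\,\nu(\{\theta_0\})\longrightarrow 0.
$$
If $\|\psi_p\|$ converges to some $c>0$, then the right-hand side forces $\nu(\{\theta_0\})>0$---which by the construction of $\nu$ is equivalent to $e(\theta_0)$ being an eigenvalue of $u_f$---and simultaneously $|g_p(\theta_0)|^2$ itself converges to a strictly positive limit.

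The last step, which is the only real bookkeeping, is to extract emphatic convergence from the behavior of $|g_p(\theta_0)|^2$. Setting $\delta_p=a_p/p-\theta_0\to 0$ (represented in $(-1/2,1/2)$) and using $\sin(\pi\delta_p)\sim\pi\delta_p$, the closed form reduces asymptotically to $|g_p(\theta_0)|^2\sim\sin^2(\pi p\delta_p)/(\pi p\delta_p)^2$. A subsequence along which $p|\delta_p|\to+\infty$ would force this limit to be $0$, and a subsequence along which $p|\delta_p|$ converges to a positive integer would kill the numerator and again give $0$; neither is compatible with $|g_p(\theta_0)|^2$ converging to a positive number. These are exactly the two behaviors excluded by the definition of emphatic convergence of $(a_p/p)$ to $\theta_0$. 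I do not expect any serious difficulty beyond this case analysis: the spectral reduction and the Fej\'er-kernel pointwise decay are standard, and the only subtlety is matching the definition of emphatic convergence to the precise subsequential asymptotics of $p\delta_p$ that preserve a positive value of $|g_p(\theta_0)|^2$.
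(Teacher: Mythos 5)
Your argument is correct and is essentially the paper's own proof: both pass to the spectral measure $\nu$ of $u_f$ at $\varphi$, write $\|\psi_p\|^2$ as the integral of a Fejér-type kernel (your $|g_p(\theta)|^2$ is the paper's $\tfrac1pF_p(\theta-a_p/p)$), use dominated convergence to isolate the atom at $\theta_0$, handle (2) by orthogonality of characters, and in (1) extract emphatic convergence from the subsequential asymptotics of $\sin^2(\pi p\delta_p)/(\pi p\delta_p)^2$. The only difference is trivial bookkeeping order in part (1).
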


\begin{proof}
  As before, let $\nu$ be the spectral measure of the unitary operator
  $u_f$ relative to the unit vector $\varphi$.
  We obtain
  $$
  \|\psi_p\|^2= \int_{\Rr/\Zz}\Bigl| \frac{1}{p}\sum_{0\leq
    n<p}e\Bigl(n\Bigl(\theta-\frac{a_p}{p}\Bigr)\Bigr)
  \Bigr|^2d\nu(\theta)=
  \int_{\Rr/\Zz}\frac{1}{p}F_p\Bigl(\theta-\frac{a_p}{p}\Bigr)d\nu(\theta),
  $$
  where $F_p$ is the Fejér kernel: $F_p(0)=p$ and
  $$
  F_p(\theta)=
  \frac{1}{p}\Bigl(\frac{\sin(\pi p \theta)}{\sin(\pi\theta)}\Bigr)^2
  $$
  for $\theta\not=0$.
  \par
  Recall that $0\leq F_p\leq p$, so $p^{-1}|F_p|\leq 1$. Moreover,
  $F_p(\theta)\to 0$ uniformly on the complement of any neighborhood of
  $0$ in $\Rr/\Zz$. Thus, using the limit assumption
  $a_p/p\to \theta_0$, we have
  $$
  F_p\Bigl(\theta-\frac{a_p}{p}\Bigr)\to 0
  $$
  as $p\to+\infty$ for any fixed $\theta\not=\theta_0$, and a fortiori
  we have the same limit after dividing the left-hand side by $p$.
  \par
  We first prove (2), and thus assume that $\theta_0=0$ and
  $p\nmid a_p$. Then
  $$
  F_p(\theta_0-\tfrac{a_p}{p})=F_p(-\tfrac{a_p}{p})=0,
  $$
  for all~$p$, hence we obtain $\|\psi_p\|\to 0$ by the dominated
  convergence theorem.
  \par
  Now we prove (1), and assume that $\theta_0\not=0$ and that
  $\|\psi_p\|$ converges to a non-zero number. If the sequence
  $(p|\tfrac{a_p}{p}-\theta_0|)$ is unbounded, then using the assumption
  $\theta_0\not=0$ and the formula defining $F_p$, we see that there is
  a subsequence of primes such that
  $$
  \frac{1}{p}F_p\Bigl(\theta_0-\frac{a_p}{p}\Bigr) \ll
  \frac{1}{p^2\bigl|\theta_0-\frac{a_p}{p}\bigr|^2}\to 0
  $$
  as $p\to +\infty$. We conclude using the dominated convergence theorem
  that $\|\psi_p\|^2\to 0$ along this subsequence, contrary to the
  assumption.
  \par
  Thus we have
  $$
  \sup_{p\to +\infty}p\Bigl|\frac{a_p}{p}-\theta_0\Bigr|=C<+\infty.
  $$
  Consider any subsequence of primes where the sequence
  $(p|\tfrac{a_p}{p}-\theta_0|)_p$ converges to some real number
  $c\geq 0$. Then
  $$
  \frac{1}{p}F_p\Bigl(\theta_0-\frac{a_p}{p}\Bigr)\to
  \Bigl(\frac{\sin(\pi c)}{\pi c}\Bigr)^2,
  $$
  hence, along this subsequence, the dominated convergence theorem
  gives
  $$
  \lim_{p\to+\infty}\|\psi_p\|^2= \Bigl(\frac{\sin(\pi c)}{\pi
    c}\Bigr)^2 \nu(\{\theta_0\}).
  $$
  
  Since we assumed that the left-hand side exists and is non-zero, we
  conclude that $\theta_0$ is an atom of $\nu$. As is well-known, this
  implies that $e(\theta_0)$ is an eigenvalue of $u_f$ (because it
  implies that the spectral projector relative to~$\{e(\theta_0)\}$ is
  non-zero; see, e.g.,~\cite[p.\,279,\,Cor.]{TS345}).
\end{proof}

\begin{corollary}\label{cor-wm}
  Suppose that the system $(X,\mu,f)$ is ergodic and that the
  Kronecker factor of $(X,\mu,f)$ is trivial, or in other words that
  $(X,\mu,f)$ is weakly mixing. Let
  $$
  \alpha_p=\frac{1}{p}\sum_{0\leq n<p}t_p(n).
  $$
  Then the endomorphisms
  $$
  v_p-\alpha_p \pi=\frac{1}{p}\sum_{0\leq n<p} t_p(n)\ u_f^n-\alpha_p
  \pi
  $$
  of $L^2(X,\mu)$ converge to $0$ in the topology of uniform
  convergence on compact subsets.
\end{corollary}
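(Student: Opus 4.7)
The plan is to use the decomposition $t_p=\alpha_p+\widetilde{t}_p+\widetilde{t}_p^{AS}+O(p^{-1/2})$ introduced just before Proposition~\ref{pr-wm} and handle each piece separately, the point being that this will replace Bourgain's uniform Wiener--Wintner theorem by the more elementary spectral input of Proposition~\ref{pr-as}. The key observation is that weak mixing prevents $u_f$ from having any eigenvalue other than~$1$, and this will contradict the alternative furnished by part~(1) of Proposition~\ref{pr-as}.

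Three of the four contributions are immediate. Since $\widetilde{\sheaf{F}}_p$ is a Fourier sheaf with bounded conductor, Proposition~\ref{pr-l2} gives operator-norm convergence to~$0$ of the $\widetilde{t}_p$-piece. The $O(p^{-1/2})$ pointwise remainder yields an operator of norm $O(p^{-1/2})$. And the constant piece produces $\alpha_p\cdot\tfrac{1}{p}\sum_{0\leq n<p}u_f^n$, which converges pointwise in $L^2$ to $\alpha_p\pi$ by the classical mean ergodic theorem, the approximation being uniform on bounded sets because $|\alpha_p|\leq\cond(\sheaf{F}_p)$ is bounded.

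The main content is the Artin-Schreier piece. Decomposing $\sheaf{A}_p$ into its geometrically irreducible summands, one can write
$$
\widetilde{t}_p^{AS}(n)=\sum_{i=1}^{r_p}c_i^{(p)}\,e\Bigl(\frac{a_i^{(p)}n}{p}\Bigr)
$$
with $a_i^{(p)}\not\equiv 0\pmod{p}$, and with $r_p$ and $|c_i^{(p)}|$ uniformly bounded by the conductor assumption. It therefore suffices to prove the following lemma: for every sequence of integers $(b_p)_{p\in\sfP}$ with $1\leq b_p<p$ and every $\varphi\in L^2(X,\mu)$, the vectors $\psi_p=\tfrac{1}{p}\sum_{0\leq n<p}e(-b_pn/p)\,u_f^n(\varphi)$ tend to $0$ in norm. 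I would argue by contradiction: assuming otherwise, I extract a subsequence along which $\|\psi_p\|\to L>0$ and $b_p/p\to\theta_0$ in $\Rr/\Zz$. The case $\theta_0=0$ is excluded by part~(2) of Proposition~\ref{pr-as}, since $b_p\not\equiv 0\pmod{p}$; the case $\theta_0\neq 0$ is excluded by part~(1), which would force $e(\theta_0)\neq 1$ to be an eigenvalue of $u_f$, contradicting weak mixing.

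Combining these contributions gives $v_p-\alpha_p\pi\to 0$ pointwise on $L^2(X,\mu)$, and the uniform bound $\|v_p-\alpha_p\pi\|\ll 1$ (from the bounded conductor of $\sheaf{F}_p$) yields equicontinuity of the family, which upgrades the pointwise convergence to uniform convergence on compact subsets via the standard fact cited in~\cite[p.\,16,\,th.\,1]{TGX}. The main obstacle is the subsequence bookkeeping in the Artin-Schreier step, together with the mildly delicate point that the case $\theta_0=0$ must be ruled out through part~(2) of Proposition~\ref{pr-as} rather than through weak mixing (which a priori only excludes eigenvalues different from~$1$).
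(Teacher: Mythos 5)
Your proposal is correct and follows essentially the same route as the paper's own proof of this corollary: the same decomposition $t_p=\alpha_p+\widetilde{t}_p+\widetilde{t}_p^{AS}+O(p^{-1/2})$, Proposition~\ref{pr-l2} and the classical mean ergodic theorem for the easy pieces, a subsequence-extraction argument invoking both parts of Proposition~\ref{pr-as} (with weak mixing ruling out the eigenvalue alternative) for the Artin--Schreier piece, and equicontinuity from the bounded conductor to upgrade pointwise convergence to uniform convergence on compacts. The only cosmetic difference is that you package the Artin--Schreier step as a standalone lemma about an arbitrary sequence $(b_p)_{p}$ with $1\leq b_p<p$, whereas the paper extracts subsequences of constant rank first and then isolates a single character.
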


\begin{proof}
  Since $|\alpha_p|\leq \cond(\sheaf{F}_p)$, the family of
  endomorphisms $v_p-\alpha_p\pi$ is equicontinuous, hence it suffices
  to prove pointwise convergence to $0$ for all
  $\varphi\in L^2(X,\mu)$. We may further assume that $\varphi$ has
  norm $1$.
  
  We write $t_p=t_p^{AS}+\widetilde{t}_p$, where $t_p^{AS}$ is the
  Artin-Schreier component and $\widetilde{t}_p$ is the trace function
  of a Fourier sheaf $\widetilde{\sheaf{F}}_p$ with bounded
  conductor. Using the Riemann Hypothesis, we can express further
  $$
  t_p^{AS}=\alpha_p+\widetilde{t}_p^{AS}+O(p^{-1/2}),
  $$
  for all $p$, where $\widetilde{t}_p^{AS}$ is the trace function of
  an Artin-Schreier sheaf $\sheaf{A}_p$ with no trivial geometrically
  irreducible component and with bounded conductor, and where the
  implied constant depends only on $\cond(\sheaf{F}_p)$. Then we have
  $$
  \frac{1}{p}\sum_{0\leq n<p} \widetilde{t}_p(n)\ u_f^n(\varphi)\to 0
  $$
  by Proposition~\ref{pr-l2} applied to the sheaves
  $\widetilde{\sheaf{F}}_p$, and
  $$
  \frac{1}{p}\sum_{0\leq n<p} \alpha_p\ u_f^n(\varphi)-
  \alpha_p\pi(\varphi)
  \to 0
  $$
  by the classical mean-ergodic
  theorem~\cite[Th.\,2.21]{einsiedler-ward}.
  \par
  We are now done unless $\sheaf{A}_p$ has rank $\geq 1$ for an
  infinite sequence of primes. We now assume this and consider only
  such primes. Let
  $$
  \psi_p= \frac{1}{p}\sum_{0\leq n<p} \widetilde{t}^{AS}_p(n)\
  u_f^n(\varphi).
  $$
  
  The sequence $(\|\psi_p\|)_p$ is bounded by the maximum of the ranks
  of the sheaves~$\sheaf{A}_p$. Let
  $c\geq 0$ be a limiting value, obtained for a subsequence of primes
  which we omit from the notation. Assume that $c>0$. By passing to a
  further subsequence, we may assume that the rank of $\sheaf{A}_p$ is
  a constant $r\geq 1$. We have geometric isomorphisms
  $$
  \sheaf{A}_p\simeq \bigoplus_{j=1}^r\sheaf{L}_{\psi(-a_{p,j}x)}
  $$
  for some integers $0<a_{p,j}<p$. There must exist some fixed $j$
  such that the norm of
  $$
  \widetilde{\psi}_p=\frac{1}{p}\sum_{0\leq n<p}
  e\Bigl(-\frac{a_{p,j}n}{p}\Bigr) \ u_f^n(\varphi)
  $$
  does not converge to $0$, as otherwise we would obtain $c=0$.  We
  may then assume, again by passing to a subsequence, that
  $-a_{p,j}/p$ converges to some $\theta_0\in\Rr/\Zz$. Since
  $\widetilde{\psi}_p$ does not converge to $0$, we have
  $\theta_0\not=0$ by Proposition~\ref{pr-as}, (2). 
  
  Now, by definition (see~\cite[Th. 2.36 or \S 6.4]{einsiedler-ward}),
  the assumption on $(X,\mu,f)$ means that $u_f$ has no eigenvalue
  different from $1$ (and that $1$ is an eigenvalue of multiplicity
  one). We have then a contradiction to Proposition~\ref{pr-as}, (1).
  This means that all limit points of the bounded sequence
  $(\|\psi_p\|)_p$ are equal to $0$, hence it converges to $0$.
\end{proof}

Using linearity, this corollary implies Theorem~\ref{th-main}, (1) under
Assumption (b). 

\begin{example}
  Examples of weakly mixing systems $(X,\mu,f)$ are Bernoulli shifts,
  ergodic automorphisms of compact abelian groups (e.g., elements of
  $\SL_d(\Zz)$ acting on $(\Rr/\Zz)^d$ which have no root of unity as
  an eigenvalue) or the Gauss map in the theory of continued
  fractions~\cite{philipp}.

  Another important class arises in homogeneous dynamics. Let $G$ be a
  locally compact group, $\Gamma$ a lattice in $G$ and consider the
  action of $G$ on $X=\Gamma\backslash G$. Denote by $\mu_X$ the
  $G$-invariant probability measure on $X$. Assume that the action is
  mixing~\cite[\S\,8.1]{einsiedler-ward}. Let $x\in G$ be such that
  $x^n\to +\infty$ in $G$ as $n\to +\infty$. Then defining
  $f(\Gamma y)=\Gamma yx$, we obtain a system $(X,\mu_X,f)$ that is
  mixing by definition, hence weakly mixing. This applies for instance
  to $G=\SL_2(\Rr)$ and $x$ a non-trivial unipotent element.
\end{example}




\section{The topological case}\label{sec-sarnak}

In this section, we prove Theorem~\ref{th-sarnak}. Thus let $X$ be a
compact topological space and $f\colon X\to X$ a continuous map, such
that the topological entropy $h(f)$ is zero (see, e.g,~\cite[\S
4]{einsiedler-schmidt} for an introduction to topological
entropy). Let $\varphi\colon X\to \Cc$ be continuous and $x\in X$.
The goal is to find conditions on a family $(\sheaf{F}_p)$ of sheaves
modulo $p$ with bounded conductor which imply that
$$
\lim_{p\to+\infty} \frac{1}{p}\sum_{0\leq n<p}
t_p(n)\varphi(f^n(x))=0,
$$
with no exceptions or sparseness assumption. The claim of
Theorem~\ref{th-sarnak} is that this is the case when the family
consists of Kloosterman sheaves or Kummer sheaves associated to real
characters, for which $t_p(n)=\Kl_2(n;p)$ or~$t_p(n)=(\tfrac{n}{p})$,
respectively.

The proof is in fact a straightforward adaptation of the combinatorial
argument that shows that decay of multiple correlations of the Möbius
function (what is called the Chowla conjecture) implies Sarnak's
conjecture, as presented e.g. on Tao's blog~\cite{tao}, and extends to
a certain class of sheaves introduced in~\cite{sop} under the name of
``bountiful sheaves'' (\cite[Def.\,1.2]{sop}). For a clearer
perspective, we make the following definition:

\begin{definition}\label{def-monodromy-entropy}
  Let $(\sheaf{F}_p)$ be a family of sheaves modulo $p$ with bounded
  conductor. We say that it has \emph{positive monodromy-entropy} if for
  any integers $k\geq 1$ and $H\geq 1$, the number $N_p(k,H)$ of
  tuples of non-negative integers
  $(h_1,\ldots, h_k,h'_1,\ldots, h'_k)$ with $h_i$, $h_j\leq H$ such
  that
  $$
  \bigotimes_{i=1}^k [+h_i]^* \sheaf{F}_p\otimes \bigotimes_{i=1}^k
  [+h'_i]^* \dual(\sheaf{F}_p)
  $$
  contains a geometrically trivial irreducible component satisfies
  $$
  N_p(k,H)\ll (2k)^{k}H^{k}.
  $$
\end{definition}

A key point in this definition is that the number $N_p(k,H)$ is
bounded independently of $p$, but it is also important that the
exponent of~$H$ is no larger than~$k$.

Here is our general statement:

\begin{proposition}\label{pr-sarnak-2}
  Let $(\sheaf{F}_p)_p$ be a family of sheaves modulo $p$ with
  positive monodromy-entropy and bounded conductor.

  Let $X$ be a locally compact topological space and $f\colon X\to X$
  a continuous map. Assume that either $X$ is compact or that~$X$ is a
  metric space and $f$ uniformly continuous.

  Assume that the topological entropy of $f$ is zero.  Then for all
  bounded\footnote{Check} continuous functions
  $\varphi\colon X\to \Cc$ and all $x\in X$, we have
  \begin{equation}\label{eq-lim-sarnak}
    \lim_{p\to+\infty} \frac{1}{p}\sum_{0\leq n<p}
    t_p(n)\varphi(f^n(x))=0.
  \end{equation}
\end{proposition}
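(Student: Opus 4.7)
The approach is the standard ``decay of correlations implies Sarnak'' template (as in Tao's blog post deriving Sarnak's conjecture from Chowla). Three ingredients combine: a template decomposition of the deterministic sequence $(\varphi(f^n(x)))_n$ coming from zero topological entropy; a $2k$-th moment bound that reduces the problem to multi-point correlation sums of $t_p$; and Deligne's Riemann Hypothesis together with the positive monodromy-entropy hypothesis to control those correlation sums.

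\emph{Step 1 (template covering).} Fix $\epsilon>0$ and an integer $H\geq 1$. Using zero topological entropy and the uniform continuity of $\varphi$ (valid in both cases of the proposition), I would produce functions $\psi_1,\ldots,\psi_K\colon\{0,\ldots,H-1\}\to\Cc$ bounded by $\|\varphi\|_\infty+\epsilon$ such that for every $m$ the window $(\varphi(f^{m+h}(x)))_{0\leq h<H}$ is within $\epsilon$ in $\ell^\infty$ of some $\psi_{j(m)}$, and moreover $K=K(\epsilon,H)=e^{o_\epsilon(H)}$. This is standard: apply the definition of topological entropy to an open cover of $X$ by sets on which $\varphi$ oscillates by less than $\epsilon$.

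\emph{Step 2 (block decomposition and $2k$-th moment).} A routine Fubini swap yields
$$A_p:=\frac{1}{p}\sum_{n<p}t_p(n)\varphi(f^n(x))=\frac{1}{pH}\sum_{0\leq m<p-H}I_m+O(H/p),\qquad I_m:=\sum_{h<H}t_p(m+h)\varphi(f^{m+h}(x)).$$
Replacing each window by its template gives $I_m=\tilde I_m+E_m$ with $\tilde I_m=\sum_h t_p(m+h)\psi_{j(m)}(h)$ and $|E_m|\ll\epsilon H$, so the $E_m$ contribute $O(\epsilon)$ to $A_p$. For the main term, apply Hölder together with $|\tilde I_m|^{2k}\leq\sum_{j=1}^K|\sum_h t_p(m+h)\psi_j(h)|^{2k}$ and expand:
$$\Bigl|\frac{1}{pH}\sum_m\tilde I_m\Bigr|^{2k}\leq \frac{K(\|\varphi\|_\infty+\epsilon)^{2k}}{pH^{2k}}\sum_{(\vec h,\vec h')\in\{0,\ldots,H-1\}^{2k}}|T_p(\vec h,\vec h')|+O(1/p),$$
where $T_p(\vec h,\vec h'):=\sum_{m\in\Ff_p}\prod_{i=1}^k t_p(m+h_i)\overline{t_p(m+h'_i)}$ is the $\Ff_p$-trace of the sheaf $\mcG(\vec h,\vec h'):=\bigotimes_i[+h_i]^*\sheaf{F}_p\otimes\bigotimes_i[+h'_i]^*\dual(\sheaf{F}_p)$.

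\emph{Step 3 (Riemann Hypothesis and optimization).} By Deligne's theorem on the Riemann Hypothesis applied to the semisimplification of $\mcG(\vec h,\vec h')$, one has $|T_p(\vec h,\vec h')|\ll_k\sqrt p$ whenever $\mcG(\vec h,\vec h')$ has no geometrically trivial irreducible component, and the trivial $|T_p(\vec h,\vec h')|\ll_k p$ otherwise, the implied constants depending only on a uniform bound on $\cond(\mcG(\vec h,\vec h'))$ (itself controlled in terms of $k$ and $\cond(\sheaf{F}_p)$ by iterating conductor bounds for tensor products and Fourier transforms, translations being conductor-preserving). The positive monodromy-entropy hypothesis bounds the number of ``bad'' tuples by $(2k)^kH^k$, hence
$$\sum_{(\vec h,\vec h')}|T_p(\vec h,\vec h')|\ll_k(2k)^kH^kp+H^{2k}\sqrt p.$$
Substituting and taking $2k$-th roots gives
$$\limsup_{p\to\infty}|A_p|\ll_{\varphi}\epsilon+K(\epsilon,H)^{1/(2k)}\sqrt{2k/H}.$$
Choosing $k=\lceil\log K(\epsilon,H)\rceil$, which is $o_\epsilon(H)$ by zero entropy, makes $K^{1/(2k)}\leq\sqrt e$ while $\sqrt{k/H}\to 0$; letting $H\to\infty$ then $\epsilon\to 0$ proves~\refs{eq-lim-sarnak}.

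The main obstacle — and the whole reason for the precise shape of the monodromy-entropy definition — is ensuring that the exponent of $H$ on the diagonal contribution is exactly $k$ (not $k+1$); this is what leaves a factor of $H^{-k/(2k)}=H^{-1/2}$ after taking $2k$-th roots, which the sub-exponential factor $K^{1/(2k)}$ cannot destroy. A secondary technicality is that the conductor of $\mcG(\vec h,\vec h')$ must be bounded uniformly in $(\vec h,\vec h')$ and $p$ (depending only on $k$), which follows from the conductor estimates used in Proposition~\ref{pr-additive-convol} combined with~\cite[Prop.\,8.2]{fkm1}.
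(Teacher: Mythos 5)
Your proposal is correct and follows essentially the same route as the paper: zero entropy supplies $e^{o(H)}$ templates for the windows of the deterministic sequence, a $2k$-th moment expansion reduces everything to the correlation sums $T_p(\vec h,\vec h')$, the Riemann Hypothesis plus the positive monodromy-entropy bound $N_p(k,H)\ll (2k)^kH^k$ controls those sums, and the choice $k\asymp\log K=o(H)$ plays the template count against the $H^{-1/2}$ gain exactly as in the paper (which takes $k\asymp\eps^2 m$). The only difference is cosmetic: the paper packages the moment computation as a large-deviation bound on the density of exceptional $n$ for a fixed template (Proposition~\ref{pr-zero}) followed by a union bound over templates, whereas you fold the sum over templates directly into the moment estimate of the full average.
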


This implies Theorem~\ref{th-sarnak} in view of the following lemma:

\begin{lemma}\label{lm-zero}
  \emph{(1)} If $(\sheaf{F}_p)_p$ is a family of bountiful sheaves,
  then it has positive monodromy-entropy.
  \par
  \emph{(2)} If $(\sheaf{F}_p)_p$ is a family such that $\sheaf{F}_p$ is
  a non-trivial Kummer sheaf for all $p$, then it has positive
  monodromy-entropy. 
\end{lemma}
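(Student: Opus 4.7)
\emph{Strategy.} In both parts the goal is to bound the count $N_p(k,H)$ by $\ll (2k)^k H^k$, uniformly in $p$. In each case one first translates the geometric condition ``contains a geometrically trivial irreducible component'' into an explicit combinatorial condition on the shifts $(h_1,\ldots,h_k,h'_1,\ldots,h'_k)$, and then counts the admissible tuples.

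\emph{Part (1): bountiful sheaves.} The substantive input is the monodromy-rigidity analysis of bountiful sheaves carried out by Kowalski and Sawin in~\cite{sop}. Bountiful sheaves have, by design, large classical geometric monodromy group (typically of type $\SL_n$, $\Sp_{2n}$, or $\mathrm{SO}_n$), and a Goursat-Kolchin-Ribet argument as developed there shows that $\bigotimes_i [+h_i]^*\sheaf{F}_p\otimes \bigotimes_j [+h'_j]^*\dual(\sheaf{F}_p)$ contains a trivial geometric component only when a suitable ``matching'' holds between the multisets $\{h_1,\ldots,h_k\}$ and $\{h'_1,\ldots,h'_k\}$ (in the simplest cases, equality of multisets; more generally a finite number of configurations controlled by the invariant theory of the monodromy group, with the number of configurations bounded by $C_k$ depending only on $k$ and on the type). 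Granting this as a black box from~\cite{sop}, for each of the $(H+1)^k$ choices of $(h_i)$ there are at most $k!\cdot C_k$ admissible tuples $(h'_j)$, so $N_p(k,H)\leq k!\, C_k(H+1)^k\ll (2k)^k H^k$.

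\emph{Part (2): Kummer sheaves.} Write $\sheaf{F}_p=\sheaf{L}_{\chi_p}$ for a nontrivial multiplicative character $\chi_p$ modulo~$p$ of order $d_p\geq 2$. Since pullbacks, tensor products and duals of Kummer sheaves are again Kummer, a direct computation gives
$$\bigotimes_i [+h_i]^*\sheaf{L}_{\chi_p}\otimes\bigotimes_j [+h'_j]^*\dual(\sheaf{L}_{\chi_p})\simeq \sheaf{L}_{\chi_p(f)},\quad f(X)=\prod_i(X+h_i)\Big/\prod_j(X+h'_j).$$
The sheaf $\sheaf{L}_{\chi_p(f)}$ is geometrically trivial iff $f$ is a $d_p$-th power in $\bFp(X)^\times$, and inspecting the divisor of $f$ this happens iff $m_a\equiv m'_a\pmod{d_p}$ for every $a\in\{0,\ldots,H\}$, where $m_a$ (resp.~$m'_a$) denotes the multiplicity of $a$ among the $h_i$ (resp.~$h'_j$). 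One counts using orthogonality of characters of $\Zz/d_p\Zz$: setting $\omega=e(1/d_p)$, one has
$$N_p(k,H)=d_p^{-(H+1)}\sum_{t\in(\Zz/d_p\Zz)^{H+1}}\Bigl|\sum_{h=0}^H\omega^{t_h}\Bigr|^{2k}.$$
Expanding the $2k$-th moment as a multiple sum and using that $\omega^{t_h}$ are i.i.d.\ with mean zero (because $d_p\geq 2$), only the tuples where every index $h$ appears in at least two of the $2k$ positions contribute, yielding the Khintchine-type estimate $N_p(k,H)\leq (2k-1)!!\,(H+1)^k\ll (2k)^k H^k$.

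\emph{Main obstacle.} The only real difficulty is to locate and invoke the correct matching/invariant-theoretic statement from~\cite{sop} in part~(1); the counting step is then routine. Part~(2) is essentially elementary: the triviality criterion follows from inspection of divisors of Kummer characters, and the moment estimate is a standard computation for sums of roots-of-unity-valued i.i.d.\ centered random variables.
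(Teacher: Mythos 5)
Your proposal is correct and follows essentially the same route as the paper: part (1) is in both cases a direct appeal to the bountiful-sheaf machinery of~\cite{sop} (Def.~1.2, Th.~1.5 and the normal/$r$-normal tuple combinatorics), and part (2) rests on the same identification of the tensor product with $\sheaf{L}_{\chi(G/H)}$ for $G=\prod_i(X+h_i)$, $H=\prod_j(X+h'_j)$, together with the criterion that geometric triviality forces $G/H$ to be a $d$-th power. The only (cosmetic) divergence is in the final count for part (2), where you carry out an explicit orthogonality/moment computation in place of the paper's citation of the normal-tuple estimate from~\cite{sop}; both yield the required $\ll (2k)^kH^k$ bound.
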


\begin{proof}
  In case (1), this follows immediately
  from~\cite[Def.\,1.2,Th.\,1.5]{sop} and elementary combinatorics,
  taking into account the definitions of normal and $r$-normal tuples
  (see~\cite[Def.\,1.3]{sop}).
  \par
  In case (2), if $\sheaf{F}_p=\sheaf{L}_{\chi}$, where $\chi$ has order
  $d\mid p-1$, with $d\geq 2$, then note that
  $$
  \bigotimes_{i=1}^k [+h_i]^* \sheaf{F}_p\otimes
  \bigotimes_{i=1}^k [+h'_i]^* \dual(\sheaf{F}_p)
  =\sheaf{L}_{\chi(G/H)},
  $$
  where $G$ and $H$ are the polynomials
  $$
  G=\prod_{i=1}^k(X+h_i),\quad\quad H=\prod_{j=1}^k(X+h'_j).
  $$
  This contains a geometrically trivial component if and only if $G/H$
  is a $d$-th power of a rational function. The bound on $N_p(k,H)$ is
  therefore clear (the worse case is when $d=2$, and then the estimate
  is the same as that for normal tuples, as
  in~\cite[Def.\,1.5,\,(1)]{sop}).
\end{proof}

We now prove Proposition~\ref{pr-sarnak-2}, following
closely~\cite{tao}. The next statement, which provides the analogue of
decay of multiple correlations of the Möbius function, could also be
derived from the work of Perret-Gentil~\cite{p-g} in most cases of
interest.

\begin{proposition}\label{pr-zero}
  Let $(\sheaf{F}_p)_p$ be a family of sheaves modulo $p$ with
  positive monodromy-entropy and bounded conductor. Let
  $(\alpha_n)_{n\geq 0}$ be a sequence of complex numbers bounded
  by~$1$.
  \par
  Fix an integer $m\geq 1$. There exists a absolute constant $C>0$
  such that, for any $\eps>0$, we have
  $$
  \frac{1}{p}\Bigl|\{0\leq n<p\,\mid\, \Bigl|\sum_{0\leq
    i<m}t_p(n+i)\alpha_i\Bigr|\geq \eps m\}\Bigr| \leq
  C\exp\Bigl(-\frac{\eps^2 m}{C}\Bigr)+O(\eps^{-\eps^2m}p^{-1/2}),
  $$
  where the implied constant depends only on the conductor of
  $(\sheaf{F}_p)$.
\end{proposition}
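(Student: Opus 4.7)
The plan is the moment method combined with Markov's inequality, following the scheme by which correlation decay for the Möbius function implies Sarnak's conjecture (as presented in~\cite{tao}). Set $S_n = \sum_{0 \leq i < m} t_p(n+i)\alpha_i$, the quantity whose large-deviation probability we wish to bound. For a positive integer $k$ to be chosen at the end, I would first estimate the $2k$-th absolute moment
$$
M_{2k} = \frac{1}{p}\sum_{n \in \Ff_p}|S_n|^{2k}.
$$
Expanding the power and exchanging summations,
$$
M_{2k} = \sum_{\mathbf{i}, \mathbf{j}}\alpha_{i_1}\cdots\alpha_{i_k}\overline{\alpha_{j_1}\cdots\alpha_{j_k}}\, T(\mathbf{i},\mathbf{j}),
$$
where $\mathbf{i} = (i_1, \ldots, i_k)$ and $\mathbf{j} = (j_1, \ldots, j_k)$ run over $\{0, \ldots, m-1\}^k$ and
$$
T(\mathbf{i},\mathbf{j}) = \frac{1}{p}\sum_{n \in \Ff_p}\prod_{\ell=1}^k t_p(n+i_\ell)\overline{t_p(n+j_\ell)}.
$$
The inner sum is a normalized sum of the trace function of the sheaf $\sheaf{G}_{\mathbf{i},\mathbf{j}} = \bigotimes_\ell [+i_\ell]^*\sheaf{F}_p \otimes \bigotimes_\ell [+j_\ell]^*\dual(\sheaf{F}_p)$, which is pure of weight $0$ and whose conductor is bounded by $C^k$ for some constant $C$ depending only on $\cond(\sheaf{F}_p)$.

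Next I would classify the tuples: a tuple $(\mathbf{i},\mathbf{j})$ is called \emph{bad} if $\sheaf{G}_{\mathbf{i},\mathbf{j}}$ contains a geometrically trivial irreducible component, and \emph{good} otherwise. For good tuples, Deligne's Riemann Hypothesis applied to $\sheaf{G}_{\mathbf{i},\mathbf{j}}$ yields $|T(\mathbf{i},\mathbf{j})| \ll C^k p^{-1/2}$. For bad tuples, the trivial bound $|t_p|\leq\cond(\sheaf{F}_p)$ gives $|T(\mathbf{i},\mathbf{j})|\leq C^k$. The positive monodromy-entropy hypothesis bounds the number of bad tuples with entries in $\{0,\ldots,m-1\}$ by $O((2k)^k m^k)$. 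Combining these estimates with $|\alpha_i|\leq 1$, I obtain
$$
M_{2k} \ll C^k (2k)^k m^k + C^k m^{2k} p^{-1/2}.
$$

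Markov's inequality then yields
$$
\frac{1}{p}\bigl|\{0 \leq n < p : |S_n| \geq \eps m\}\bigr| \leq \frac{M_{2k}}{(\eps m)^{2k}} \ll \Bigl(\frac{2Ck}{\eps^2 m}\Bigr)^k + \frac{(C/\eps^2)^k}{\sqrt{p}}.
$$
The proof is concluded by optimizing in $k$: choosing $k = \lfloor c\eps^2 m\rfloor$ for a sufficiently small $c$ depending only on $\cond(\sheaf{F}_p)$, the first term becomes $\exp(-\eps^2 m/C')$, and the second becomes $O(\eps^{-\eps^2 m}p^{-1/2})$ after absorbing the conductor-dependent powers of $C$ into the implied constant.

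The main obstacle is the conductor bookkeeping: one must verify that $\cond(\sheaf{G}_{\mathbf{i},\mathbf{j}})$ grows at most geometrically in $k$, uniformly in $p$ and in the shifts, so that the Riemann Hypothesis yields a sufficient gain for the good tuples. This should follow from standard estimates on conductors of additive translates and tensor products of middle-extensions (as in Proposition~\ref{pr-additive-convol}), but requires care to preserve uniformity in all parameters. A secondary subtlety is that the tensor product of middle extensions need not itself be a middle extension, but the discrepancy occurs at only finitely many points and is absorbed by the conductor bound.
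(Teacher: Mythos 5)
Your proposal is correct and follows essentially the same route as the paper: Markov's inequality applied to the $2k$-th moment, expansion into correlation sums of shifted trace functions, the positive monodromy-entropy hypothesis to count the tuples where square-root cancellation fails, and the choice $k \asymp \eps^2 m$ at the end. The only cosmetic difference is that you make the conductor bookkeeping for the tensor-product sheaves explicit, where the paper simply invokes the sums-of-products estimate of Fouvry--Kowalski--Michel.
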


\begin{proof}
  Let $k\geq 1$ be an integer to be chosen later. We have
  $$
  \frac{1}{p}|\{0\leq n<p\,\mid\, \Bigl|\sum_{0\leq
    i<m}t_p(n+i)\alpha_i\Bigr|\geq \eps m\}| \leq \frac{1}{(\eps m)^{2k}}
  \frac{1}{p}\sum_{0\leq n<p} \Bigl|\sum_{0\leq
    i<m}t_p(n+i)\alpha_i\Bigr|^{2k}.
  $$
  
  Since $|\alpha_i|\leq 1$, if we expand the right-hand side, we obtain
  the upper bound
  $$
  \frac{1}{(\eps m)^{2k}}
  \sumsum_{\substack{0\leq a_1,\ldots,a_k<m\\0\leq b_1,\ldots,b_k<m}}
  \frac{1}{p}\sum_{0\leq n<p}
  t_p(n+a_1)\cdots t_p(n+a_k)
  \overline{t_p(n+b_1)\cdots t_p(n+b_k)}.
  $$

  Because of the monodromy-entropy assumption and the Riemann Hypothesis
  (see~\cite[Prop. 1.1]{sop}), the inner sum is $\ll p^{-1/2}$, with
  implied constant depending only on $k$ and $\cond(\sheaf{F}_p)$, for
  all but $\leq (2k)^{k}m^{k}$ tuples $(a_i,b_j)$.

  It follows that
  $$
  \frac{1}{p}|\{0\leq n<p\,\mid\, \Bigl|\sum_{0\leq
    i<m}t_p(n+i)\alpha_i\Bigr|\geq \eps m\}| \leq \frac{1}{(\eps
    m)^{2k}} \Bigl((2k)^{k}m^{k}+O\Bigl(
  \frac{m^{2k}}{\sqrt{p}}\Bigr)\Bigr)
  $$
  where the implied constant depends only on $k$ and
  $\cond(\sheaf{F}_p)$. Taking $k$ to be the closest integer
  $\leq \eps^2 m/10$, the result follows.
\end{proof}

The crucial feature of this estimate is the fact that the first term
decays exponentially with respect to~$m$.
We sketch the argument for completeness. We may assume that $\varphi$
is real-valued and bounded by $1$. Let $0<\eps<1$ be fixed, and let
$(\varphi_{\eps}(n))$ be a sequence with values in $\Zz\eps$, such
that $|\varphi_{\eps}(n)|\leq 1$ and
$|\varphi(f^n(x))-\varphi_{\eps}(n)|\leq \eps$ for all~$n\geq 0$. Fix
an integer $m\geq 1$. Define $\kappa_{\eps}(m)$ so that the tuples
\begin{equation}\label{eq-tuple}
  (\varphi_{\eps}(n),\ldots,\varphi_{\eps}(n+m-1))\in (\Zz\eps\cap
  [-1,1])^m
\end{equation}
take $\exp(\kappa_{\eps}(m))$ values as $n$ ranges over the
non-negative integers. The fact that the topological entropy of~$f$ is
zero (i.e, the sequence $(\varphi(f^n(x)))_{n}$ is deterministic)
implies that
$$
\lim_{m\to +\infty}\frac{\kappa_{\eps}(m)}{m}=0.
$$

Let $p$ be a large prime. For any tuple~(\ref{eq-tuple}), say
$(\alpha_0,\ldots, \alpha_{m-1})$, Proposition~\ref{pr-zero} shows
that we have
$$
\frac{1}{p}\Bigl|\{0\leq n<p\,\mid\, \Bigl|\sum_{0\leq
  i<m}t_p(n+i)\alpha_i\Bigr|\geq \eps m\}\Bigr| \leq
C\exp\Bigl(-\frac{\eps^2 m}{C}\Bigr)+O(\eps^{-\eps^2m}p^{-1/2}),
$$
and hence
\begin{multline*}
  \frac{1}{p}\Bigl|\{0\leq n<p\,\mid\, \Bigl|\sum_{0\leq
    i<m}t_p(n+i)\varphi_{\eps}(n+i)\Bigr|\geq \eps m\}\Bigr| \\
  \leq C\exp\Bigl(-\frac{\eps^2
    m}{C}+\kappa_{\eps}(m)\Bigr)+O(\eps^{-\eps^2m}e^{\kappa_{\eps}(m)}p^{-1/2}).
\end{multline*}
Since $\kappa_{\eps}(m)/m\to 0$, we may take $m$ large enough
(depending on $\eps$) so that this implies
$$
 \frac{1}{p}\Bigl|\{0\leq n<p\,\mid\, \Bigl|\sum_{0\leq
    i<m}t_p(n+i)\varphi_{\eps}(n+i)\Bigr|\geq \eps m\}\Bigr| \\
  \leq \eps+o(1)
$$
as $p\to +\infty$. But then we deduce that
$$
\Bigl| \frac{1}{p}\sum_{0\leq n<p}\frac{1}{m}\sum_{0\leq
  i<m}t_p(n+i)\varphi_{\eps}(n+i)\Bigr|\leq 2\eps+o(1)
$$
because $|\varphi_{\eps}|\leq 1$ (write the average as the sum of a
term where it is $>\eps$, handled by the above inequality, and one
where it is $\leq \eps$, which has a contribution $\leq \eps$).
\par
Now notice that for $0\leq i<m$, we have
$$
\frac{1}{p}\sum_{0\leq n<p}t_p(n+i)\varphi_{\eps}(n+i)
=\frac{1}{p}\sum_{0\leq
  n<p}t_p(n)\varphi_{\eps}(n)+O\Bigl(\frac{m\cond(\sheaf{F}_p)}{p}\Bigr)
$$
with an absolute implied constant, so we get
$$
\Bigl| \frac{1}{p}\sum_{0\leq n<p}t_p(n)\varphi_{\eps}(n)\Bigr|\leq
2\eps+o(1),
$$
hence
$$
\Bigl| \frac{1}{p}\sum_{0\leq n<p}t_p(n)\varphi(f^n(x))\Bigr|\leq
3\eps+o(1).
$$
The limit~(\ref{eq-lim-sarnak}) follows.

\section{Mean-ergodic theorems in $L^r$}\label{sec-lr}

The goal of this section is to see if one can extend the mean-ergodic
theorem to the spaces $L^r(X,\mu)$ when $r>2$. We will achieve this
goal, however, only for sheaves satisfying the same extra condition
which appeared in the previous section.

\begin{proposition}
  Suppose that $(\sheaf{F}_p)$ is a family of sheaves with positive
  monodromy-entropy.  Let $r>2$ be fixed. The endomorphisms
  $$
  \widetilde{v}_p=\frac{1}{p}\sum_{0\leq n<p} t_p(n)\ u_f^n
  $$
  of $L^{r}(X,\mu)$ converge to $0$ as $p\to +\infty$.
\end{proposition}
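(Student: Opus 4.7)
The plan is to exploit the pointwise-in-$x$ form of Proposition~\ref{pr-zero} to obtain uniform control of $\widetilde v_p \varphi$ for bounded $\varphi$, from which the $L^r$ statement follows by density and uniform boundedness. Since $u_f$ is an $L^r$-isometry and $\|t_p\|_\infty \leq \cond(\sheaf{F}_p) \leq C$, the triangle inequality yields $\|\widetilde v_p\|_{L^r \to L^r} \leq C$ uniformly in $p$. Because $\mu$ is a probability measure, $L^\infty(X,\mu)$ is dense in $L^r(X,\mu)$ for every $r<\infty$, so it is enough to treat $\varphi \in L^\infty$, which I normalize so that $\|\varphi\|_\infty \leq 1$.

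The core of the proof applies Proposition~\ref{pr-zero} to the sequence $\alpha_i = \varphi(f^i x)$ for fixed $x \in X$, crucially observing that the bound there is uniform in the choice of bounded sequence, and hence uniform in $x$. Fix $\epsilon > 0$ and an auxiliary integer $m \geq 1$ to be chosen later; setting $L_n(x) = \sum_{0 \leq i < m} t_p(n+i)\alpha_{n+i}$, Proposition~\ref{pr-zero} yields
\begin{equation*}
\frac{1}{p}\bigl|\{0 \leq n < p : |L_n(x)| \geq \epsilon m\}\bigr| \leq C e^{-\epsilon^2 m/C} + O(\epsilon^{-\epsilon^2 m} p^{-1/2}),
\end{equation*}
uniformly in $x$. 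The local-to-global passage is a double-counting identity: swapping the order of summation in $\frac{1}{pm}\sum_n L_n(x)$ and bounding the boundary indices trivially by $\|t_p\|_\infty \leq C$ gives
\begin{equation*}
\widetilde v_p \varphi(x) = \frac{1}{pm}\sum_{0 \leq n < p} L_n(x) + O(m/p).
\end{equation*}
Splitting the $n$-sum according to whether $|L_n(x)| \leq \epsilon m$ and using the crude estimate $|L_n(x)| \leq Cm$ on the exceptional set, one deduces
\begin{equation*}
|\widetilde v_p \varphi(x)| \leq \epsilon + C^2 e^{-\epsilon^2 m/C} + O_{\epsilon, m}(p^{-1/2}) + O(m/p),
\end{equation*}
uniformly in $x$.

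Choosing $m = m(\epsilon)$ large enough that $C^2 e^{-\epsilon^2 m/C} \leq \epsilon$ and then letting $p \to \infty$ with $\epsilon$ fixed gives $\limsup_p \|\widetilde v_p \varphi\|_\infty \leq 2\epsilon$, so $\|\widetilde v_p \varphi\|_\infty \to 0$ as $\epsilon$ was arbitrary. Since $\mu$ is a probability measure, $\|\widetilde v_p \varphi\|_r \leq \|\widetilde v_p \varphi\|_\infty \to 0$; combined with the density and uniform boundedness from the first paragraph, this concludes the proof. The main obstacle in this plan is the local-to-global passage leading to the third display, where the uniformity in $x$ of the Proposition~\ref{pr-zero} bound is essential. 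Note that the argument actually delivers the stronger $L^\infty \to L^\infty$ operator-norm convergence, so the restriction to $r > 2$ in the statement appears to reflect only that the case $r \leq 2$ is already covered, under a Fourier-sheaf hypothesis, by Corollary~\ref{cor-lr12}.
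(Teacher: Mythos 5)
There is a genuine gap at the heart of your argument: the application of Proposition~\ref{pr-zero}. That proposition controls, for a \emph{single fixed} tuple $(\alpha_0,\dots,\alpha_{m-1})$ of complex numbers bounded by $1$, the density of $n$ for which $\bigl|\sum_{0\leq i<m}t_p(n+i)\alpha_i\bigr|\geq \eps m$ --- note that the index of $\alpha$ is $i$, not $n+i$. Your quantity $L_n(x)=\sum_{0\leq i<m}t_p(n+i)\alpha_{n+i}$ involves a tuple $(\alpha_n,\dots,\alpha_{n+m-1})$ that changes with $n$, and you need this tuple dependence for your double-counting identity $\widetilde v_p\varphi(x)=\frac{1}{pm}\sum_n L_n(x)+O(m/p)$ to hold. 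To control all $n$ simultaneously one must take a union bound over the distinct tuples occurring along the orbit, which costs a factor equal to their number; this is exactly where the zero topological entropy hypothesis enters in the proof of Proposition~\ref{pr-sarnak-2} (the factor $e^{\kappa_{\eps}(m)}$ with $\kappa_{\eps}(m)/m\to 0$). In the present proposition there is no entropy hypothesis, the number of tuples can be exponential in $m$, and the exponential gain $e^{-\eps^2m/C}$ is wiped out. The claim you derive is in fact false as stated: for an arbitrary bounded sequence $(\beta_j)$ one may take $\beta_j$ proportional to $\overline{t_p(j)}$, and then $\sum_i t_p(n+i)\beta_{n+i}\asymp m$ for a positive density of $n$. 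Concretely, your conclusion of $L^\infty\to L^\infty$ operator-norm convergence fails for, e.g., a Bernoulli shift with $\varphi$ the coordinate evaluation: a positive-measure set of points $x$ has $(\varphi(f^n(x)))_{n<p}$ tracking $\overline{t_p(n)}$ closely enough that $|\widetilde v_p\varphi(x)|\gg 1$.

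The paper's proof avoids pointwise control entirely. It reduces by monotonicity to $r=2k$ with $k\geq 2$ an integer, expands $\|\widetilde v_p\varphi\|_{2k}^{2k}$ as a $2k$-fold sum of correlations $\sum_h t_p(h)t_p(h+n_2)\cdots\overline{t_p(h+m_k)}$ weighted by inner products of products of iterates of $\varphi$, and invokes the positive monodromy-entropy hypothesis (via the sum--product estimates of~\cite{sop}) to show that all but $O(p^{k-1})$ of these correlation sums are $O(p^{1/2}\log p)$. This yields $\|\widetilde v_p\varphi\|_{2k}^{2k}\ll p^{-1/2}\log p + p^{-k-1}$, which is an $L^{2k}$ statement that tolerates the small exceptional set of points where the average is large. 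You should rework your argument along these lines, or else add the zero-entropy hypothesis --- but then you would be proving Proposition~\ref{pr-sarnak-2}, not this one.
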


\begin{proof}
  Using monotonicity, as in Corollary~\ref{cor-lr12}, it is enough to
  prove this when $r=2k$ for some integer $k\geq 2$ to deduce it for
  $r\leq 2k$.
  \par
  Let $\varphi\in L^{2k}(X,\mu)$ and denote
  $\psi_p=\widetilde{v}_p(\varphi)$. We have
\begin{multline*}
  \|\psi_p\|_{2k}^{2k}= \frac{1}{p^{2k}} \sum_{\substack{n_1,\ldots,
      n_k\\0\leq n_i<p}} \sum_{\substack{m_1,\ldots,m_k\\0\leq m_j<p}}
  t_p(n_1)\cdots t_p(n_k)
  \overline{t_p(m_1)\cdots t_p(m_k)}\\
  \langle u_f^{n_1}(\varphi)\cdots u_f^{n_k}(\varphi),
  u_f^{m_1}(\varphi)\cdots u_f^{m_k}(\varphi) \rangle.
\end{multline*}
Since $u_f$ is isometric, we have
$$
\langle u_f^{n_1}(\varphi)\cdots u_f^{n_k}(\varphi),
u_f^{m_1}(\varphi)\cdots u_f^{m_k}(\varphi) \rangle=
\langle
\varphi\cdots u_f^{n_k-n_1}(\varphi), u_f^{m_1-n_1}(\varphi)\cdots
u_f^{m_k-n_1}(\varphi) \rangle.
$$
Hence, we may sum over $h=n_1$ first, obtaining
\begin{multline*}
  \|\psi_p\|_{2k}^{2k}= \frac{1}{p^{2k}} \sum_{n_2,\ldots,
    n_k}\sum_{m_1,\ldots,m_k} \langle \varphi\
  u_f^{n_2}(\varphi)\cdots u_f^{n_k}(\varphi),
  u_f^{m_1}(\varphi)\cdots u_f^{m_k}(\varphi) \rangle
  \\
  \sum_h t_p(h)t_p(h+n_1)\cdots t_p(h+n_k) \overline{t_p(h+m_1)\cdots
    t_p(h+m_k)},
\end{multline*}
where the sum is over integers $0\leq h<p$ such that
$$
0\leq h+n_i<p,\quad\quad 0\leq h+m_j<p
$$
for $2\leq i\leq k$ and $1\leq j\leq k$, respectively. This is a sum
over an interval of length $<p$.  The assumption on $\sheaf{F}_p$ then
implies that
$$
\sum_h t_p(h)t_p(h+n_1)\cdots t_p(h+n_k) \overline{t_p(h+m_1)\cdots
  t_p(h+m_k)}\ll p^{1/2}(\log p),
$$
where the implied constant depends only on $\cond(\sheaf{F}_p)$ and
$k$, unless $(0,n_2,\ldots,n_k)$ is a permutation of
$(m_1,\ldots,m_k)$ (see~\cite[Th. 1.5]{sop}). This occurs for
$\ll p^{k-1}$ tuples $(n_2,\ldots,m_k)$, and for these we have a bound
$\ll p$ for the sum, where the implied constant depends only on
$\cond(\sheaf{F}_p)$ and $k$. Thus we derive
$$
\|\psi_p\|_{2k}^{2k}\ll p^{-1/2}(\log p)+p^{-k-1}.
$$
\par
This shows that $\|\widetilde{v}_p\|\to 0$ in the space of
endomorphisms of $L^{2k}(X,\mu)$, and concludes the proof.
\end{proof}

\begin{remark}
  Analyzing the proof of the proposition further, we can reach a
  stronger conclusion and indeed derive a slightly stronger pointwise
  statement than the one in Theorem~\ref{th-main}, although under
  assumptions that are reasonable in principle, but difficult to
  check.

  We take the case $k=2$ of the proposition, and rewrite the first
  steps above: for $\varphi\in L^4(X,\mu)$, we have
  $$
  \|\psi_p\|_{4}^{4}= \frac{1}{p^{4}} \sum_{b} \sum_{c,d} \langle
  \varphi\ u_f^{b}(\varphi),u_f^{c}(\varphi) u_f^{d}(\varphi) \rangle
  \sum_a t_p(a)t_p(a+b)\overline{t_p(a+c)t_p(a+d)}.
  $$
  We rewrite the sum in the form
  $$
  \|\psi_p\|_{4}^{4}= \frac{1}{p^{7/2}} \sum_{c,d} \langle \varphi\
  u_f^{b}(\varphi),u_f^{c}(\varphi) u_f^{d}(\varphi) \rangle
  \tau_{c,d}(b)
  $$
  where
  $$
  \tau_{c,d}(b)= \frac{1}{\sqrt{p}} \sum_a
  t_p(a)t_p(a+b)\overline{t_p(a+c)t_p(a+d)},
  $$
  hence
  $$
  \|\psi_p\|_{4}^{4}= \frac{1}{p^{5/2}} \sum_{c,d} \langle
  w_{p,c,d}(\varphi), \bar{\varphi}\ u_f^{c}(\varphi) u_f^{d}(\varphi)
  \rangle
  $$
  where
  $$
  w_{p,c,d}(\varphi)=\frac{1}{p}\sum_{0\leq b<p}\tau_{c,d}(b)\, \varphi
  \circ f^b.
  $$
  
  Now assume that $\varphi\in L^6(X,\mu)$, which implies that
  $\bar{\varphi}\ u_f^{c}(\varphi) u_f^{d}(\varphi)$ belongs to
  $L^2(X,\mu)$ and has norm $\ll 1$. Assume moreover that the family
  $(\sheaf{F}_p)$ satisfies the condition that \emph{for most $(c,d)$,
    with $\ll p$ exceptions, the function $\tau_{c,d}$ is a trace
    function of a Fourier sheaf, with weights $\leq 0$}. Note then that,
  by Sawin's Quantitative Sheaf Theory
  (see~\cite[Th.\,1.1,\,Cor.\,7.4]{qst}), the conductor of~$\tau_{c,d}$
  is $\ll 1$ for all~$p$.

  Under these conditions, by Proposition~\ref{pr-l2}, we obtain
  $$
  \|w_{p,c,d}(\varphi)\|_2^2\ll p^{-1/2}\log p,
  $$
  for most $(c,d)$, and hence conclude that
  $$
  \|\psi_p\|_4^4\ll p^{-1}\log p.
  $$
  
  If we assume that the family $(\sheaf{F}_p)$ is indexed by a set of
  primes $\sfP$ such that
  $$
  \sum_{p\in\sfP} \frac{\log p}{p}<+\infty,
  $$
  then this result means that
  $$
  \sum_p \|\psi_p\|_4^4<+\infty,
  $$
  or in other words that the non-negative function
  $$
  \sum_p |\psi_p|^4
  $$
  is integrable on $X$. This imples that
  $\psi_p=\widetilde{v}_p(\varphi)$ converges almost everywhere to
  $0$, a pointwise theorem. This is a bit stronger than the pointwise
  part of Theorem~\ref{th-main}, but the latter does not require any
  extra condition, and hence we do not pursue the verification that
  the assumption above holds in reasonable situations. 
\end{remark}

\section{Maximal inequalities in $L^2$}

We now consider maximal inequalities in $L^2$, i.e., we endeavor to
estimate functions like
$$
M\varphi\colon x\mapsto \sup_{p}\Bigl| \frac{1}{p}\sum_{0\leq n<p}
t_p(n)\ \varphi(f^n(x))\Bigr|
$$
in $L^2$-norm, where we have fixed the dynamical system $(X,\mu,f)$ and
the family of sheaves $(\sheaf{F}_p)$ with bounded conductor, and with
trace functions $t_p$. In fact, we will need to restrict the supremum to
sparse subsets of the primes, and so we use the notation
$$
M_{\sfP}(\varphi)(x)=\sup_{p\in\sfP}\Bigl|
\frac{1}{p}\sum_{0\leq n<p} t_p(n)\ \varphi(f^n(x))\Bigr|
$$
for any set $\sfP$ of primes. We write
$$
s(\sfP)=\sum_{p\in\sfP}\frac{(\log p)^2}{p},
$$
which is finite if and only if~$\sfP$ is sparse.


\begin{proposition}\label{pr-maximal}
  Suppose that $(\sheaf{F}_p)_{p}$ is an almost Fourier family
  \textup{(Definition~\ref{def-af})} with mean $r\geq 0$.  Suppose
  further that $\sfP$ is sparse. Let $\varphi\in L^2(X,\mu)$. We have
  $$
  \|M_{\sfP}\varphi\|_2\leq C_2\|\varphi\|_2
  $$
  for some constant $C_2$ depending only on the conductor of
  $(\sheaf{F}_p)$ and on $s(\sfP)$.
\end{proposition}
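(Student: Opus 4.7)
The plan is to exploit the almost Fourier decomposition to split the averaging operator into a classical Birkhoff part, handled by the standard maximal ergodic theorem, and a Fourier part, handled via a crude square-sum majorization and the operator norm bound of Proposition~\ref{pr-l2}. The sparseness of $\sfP$ is exactly what makes the latter summable.

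First I would write $t_p = r + \widetilde{t}_p$, where $\widetilde{t}_p$ is the trace function of a Fourier sheaf $\widetilde{\sheaf{F}}_p$ with conductor uniformly bounded (this is the content of Definition~\ref{def-af}). Correspondingly,
$$
v_p(\varphi) = r \cdot A_p(\varphi) + \widetilde{v}_p(\varphi),
\qquad
A_p(\varphi) = \frac{1}{p}\sum_{0\leq n<p} u_f^n(\varphi),
\qquad
\widetilde{v}_p(\varphi) = \frac{1}{p}\sum_{0\leq n<p}\widetilde{t}_p(n)\, u_f^n(\varphi),
$$
and so by the triangle inequality, $M_{\sfP}\varphi \leq r\, \sup_{p\in\sfP} |A_p\varphi| + \sup_{p\in\sfP}|\widetilde{v}_p(\varphi)|$. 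The first term can be controlled by enlarging the supremum to run over all positive integers $N$ and invoking Wiener's classical maximal ergodic theorem, which yields $\|\sup_N |A_N\varphi|\|_2 \leq C\|\varphi\|_2$ for an absolute constant $C$.

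For the second (Fourier) term, I would use the crude pointwise estimate
$$
\sup_{p\in\sfP}|\widetilde{v}_p(\varphi)(x)|^2 \leq \sum_{p\in\sfP}|\widetilde{v}_p(\varphi)(x)|^2
$$
valid for every $x \in X$. Integrating over $X$ and invoking Proposition~\ref{pr-l2} for each Fourier sheaf $\widetilde{\sheaf{F}}_p$ yields
$$
\Bigl\|\sup_{p\in\sfP}|\widetilde{v}_p(\varphi)|\Bigr\|_2^2
\leq \sum_{p\in\sfP}\|\widetilde{v}_p(\varphi)\|_2^2
\ll \sum_{p\in\sfP}\frac{(\log p)^2}{p}\|\varphi\|_2^2
= s(\sfP)\,\|\varphi\|_2^2,
$$
where the implied constant depends only on the uniform bound on $\cond(\widetilde{\sheaf{F}}_p)$, hence only on the conductor of $(\sheaf{F}_p)$. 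Combining the two bounds gives the claim with $C_2 = 2r + C'\sqrt{s(\sfP)}$ for a constant $C'$ depending only on the conductor of the family.

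There is essentially no obstacle here: the only mild point is to notice that the supremum over a sparse set of primes can be controlled by the $\ell^2$ sum of the operator norms precisely because $\sum_{p\in\sfP}\|\widetilde{v}_p\|^2 \lesssim s(\sfP) < +\infty$ by the sparseness hypothesis. One should not expect a finer Rademacher--Menshov type argument to be necessary, nor to recover anything like the sharp constants; the whole proof is a one-page combination of Proposition~\ref{pr-l2} with the classical maximal ergodic theorem applied to the mean $r$.
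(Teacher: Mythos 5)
Your proof is correct, and it reaches the conclusion by a more direct route than the paper. The decomposition $t_p=r+\widetilde{t}_p$ and the two key ingredients --- the classical maximal ergodic theorem for the mean part, and the crude majorization $\sup_{p\in\sfP}|\widetilde{v}_p(\varphi)|^2\leq\sum_{p\in\sfP}|\widetilde{v}_p(\varphi)|^2$ made summable by sparseness for the Fourier part --- are exactly those of the paper. The difference is that the paper first runs a Calder\'on-type transfer to $\Zz$ (Lemma~\ref{lm-transfer}), proves the maximal inequality for the shift on $\Zz$ using the Plancherel formula on $\Rr/\Zz$ together with Corollary~\ref{cor-ft}, and only then deduces the statement on $(X,\mu,f)$; you instead stay on $X$ throughout and invoke the operator-norm bound $\|\widetilde{v}_p\|\ll p^{-1/2}\log p$ of Proposition~\ref{pr-l2}, which is itself proved by the spectral theorem and the same Corollary~\ref{cor-ft}. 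Since the Fourier part is here controlled by a square function rather than a genuine maximal function, the transfer machinery buys nothing in this instance, and your argument is the shorter one; the transfer would become necessary only for finer variants (as in the paper's Section~\ref{sec-pointwise}, or for Bourgain-style arguments where the $\Zz$-side estimate is the real content). Your final constant $C_2=2r+C'\sqrt{s(\sfP)}$ is of the stated form, depending only on the conductor of the family and on $s(\sfP)$.
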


The method that we use is a direct adaptation of that of
Bourgain~\cite[\S 2, \S 3]{bourgain3} (it is in fact much simpler).  In
the remainder of this section, we fix the sparse set $\sfP$, and we will
omit it from the notation unless it is required for context.
\par
The first step is to transfer the problem to $\Zz$.  For any bounded
function $\varpi$ on $\Zz$, we define
$\widetilde{M}(\varpi)\colon \Zz\to\Cc$ by
$$
\widetilde{M}(\varpi)(k)= \sup_{p\in\sfP}\
\Bigl|\frac{1}{p}\sum_{0\leq n<p} t_p(n)\ \varpi(k+n)\Bigr|.
$$

\begin{lemma}\label{lm-transfer}
  Suppose that there exists $C_3\geq 0$, depending only on the
  conductor of $(\sheaf{F}_p)$ and on $s(\mathsf{F})$, such that
  $$
  \|\widetilde{M}\varpi\|_2\leq C_3\|\varpi\|_2
  $$
  for all $\varpi$ bounded on $\Zz$. Then Proposition~\ref{pr-maximal}
  holds with $C_2=C_3$.
\end{lemma}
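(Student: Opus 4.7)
The plan is to invoke Calderón's transfer principle: the hypothesis supplies a uniform $\ell^2$-bound for the intrinsic maximal operator on~$\Zz$ with respect to translation, and the goal is to transfer it to $L^2(X,\mu)$ using that $f$ preserves~$\mu$.

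First I would fix a finite subset $\sfP'\subset\sfP$, set $P=\max\sfP'$, and, at least initially, assume $\varphi\in L^{\infty}(X,\mu)$; the general $L^{2}$-case follows by approximation, since $M_{\sfP'}$, being a supremum of finitely many $L^{2}$-bounded operators, is $L^{2}$-sublinear and continuous. For each $x\in X$ and each integer $N>P$, I would introduce the orbital truncation $\varpi^{N}_{x}\colon\Zz\to\Cc$ defined by $\varpi^{N}_{x}(k)=\varphi(f^{k}(x))$ for $0\leq k\leq N$ and $\varpi^{N}_{x}(k)=0$ otherwise. This function is bounded on $\Zz$, so the hypothesis yields
$$
\sum_{k\in\Zz}|\widetilde{M}(\varpi^{N}_{x})(k)|^{2}\leq C_{3}^{2}\sum_{k=0}^{N}|\varphi(f^{k}(x))|^{2}.
$$

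The key observation is that for $0\leq k\leq N-P$ and any $p\in\sfP'$ one has $0\leq k+n\leq N$ whenever $0\leq n<p$, so $\varpi^{N}_{x}(k+n)=\varphi(f^{k+n}(x))$. Taking the supremum over $p\in\sfP'$ (which is no larger than the supremum over $p\in\sfP$ defining $\widetilde{M}$) gives $M_{\sfP'}\varphi(f^{k}(x))\leq \widetilde{M}(\varpi^{N}_{x})(k)$ for such~$k$, and restricting the left-hand sum in the previous display accordingly produces
$$
\sum_{k=0}^{N-P}|M_{\sfP'}\varphi(f^{k}(x))|^{2}\leq C_{3}^{2}\sum_{k=0}^{N}|\varphi(f^{k}(x))|^{2}.
$$

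To finish, I would integrate over~$X$ against~$\mu$: since $f$ preserves~$\mu$, each term $\int_{X}|g(f^{k}(x))|^{2}d\mu(x)$ equals $\|g\|_{2}^{2}$, and the inequality becomes
$$
(N-P+1)\|M_{\sfP'}\varphi\|_{2}^{2}\leq C_{3}^{2}(N+1)\|\varphi\|_{2}^{2}.
$$
Dividing by $N+1$ and letting $N\to+\infty$ gives $\|M_{\sfP'}\varphi\|_{2}\leq C_{3}\|\varphi\|_{2}$, and the monotone limit as $\sfP'$ exhausts~$\sfP$ along an increasing sequence of finite subsets yields $\|M_{\sfP}\varphi\|_{2}\leq C_{3}\|\varphi\|_{2}$, so that $C_{2}=C_{3}$ works. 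No step here is a serious obstacle: this is the standard transfer argument, and the sparseness of~$\sfP$ as well as the finer structure of the sheaves $\sheaf{F}_{p}$ play no role beyond being already absorbed into the constant~$C_{3}$; the only item worth double-checking is the boundary correction coming from the top $P$ indices of $[0,N]$, which is negligible after division by~$N+1$.
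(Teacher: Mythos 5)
Your proposal is correct and is essentially the same Calderón transfer argument as in the paper: one truncates the orbit of $x$ to a window of length $N$ (the paper uses $Q=\lambda P$ and lets $\lambda\to+\infty$ where you let $N\to+\infty$), applies the $\ell^2(\Zz)$ hypothesis to the truncated function, matches the two maximal functions on the range $0\leq k<N-P$, integrates using $f_*\mu=\mu$, and discards the boundary terms in the limit. The only differences are cosmetic (your reduction to finite $\sfP'$ plus monotone convergence versus the paper's direct $\sup_{p\leq P}$, and your explicit density remark for passing from $L^\infty$ to $L^2$, which the paper leaves implicit).
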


\begin{proof}
  We use the classical method of transfer to $\Zz$.  It suffices to
  prove that for all $P\geq 2$ and all $\varphi\in L^{\infty}(X,\mu)$,
  we have
  $$
  \|M^P\varphi\|_2\leq 2C_3\|\varphi\|_2
  $$
  where 
  $$
  M^P(\varphi)=\sup_{p\leq P}\Bigl| \frac{1}{p}\sum_{0\leq n<p}
  t_p(n)\ (\varphi\circ f^n)\Bigr|\in L^{2}(X,\mu).
  $$
  
  Fix such a $P$ and $\varphi$ bounded and measurable on $X$.  Let
  $\lambda>1$ be a parameter and $Q=\lambda P$. Let $x\in X$. Define
  $\widetilde{\varphi}\colon\Zz\to \Cc$ by
$$
\widetilde{\varphi}(n)=
\begin{cases}
  \varphi(f^n(x))&\text{ if } 0\leq n<Q\\
  0&\text{ otherwise.}
\end{cases}
$$
Note that for any prime $p\leq P$ and $n$, $k$ such that
$0\leq n+k<Q$, we have
$$
\widetilde{\varphi}(n+k)=\varphi(f^{n+k}(x))=\varphi(f^n(f^k(x)))
$$
so that for $0\leq k<Q-P$, we get
\begin{align}
  M^P(\varphi)(f^k(x))
  &=
    \sup_{p\leq P}\Bigl| \frac{1}{p}\sum_{0\leq n<p}
    t_p(n)\ (\varphi(f^{n}(f^k(x))))\Bigr|\notag\\
  &=
    \sup_{p\leq P}\
    \Bigl|\frac{1}{p}\sum_{0\leq n<p} t_p(n)\
    \widetilde{\varphi}(k+n)\Bigr|=
    \widetilde{M}^{P}(\widetilde{\varphi})(k),
    \label{eq-transfer}
\end{align}
say. 
By assumption, we have
$\|\widetilde{M}^P(\widetilde{\varphi})\|_2\leq
\|\widetilde{M}(\widetilde{\varphi})\|_2\leq
C_3\|\widetilde{\varphi}\|_2$. This means that
$$
\sum_{k\in\Zz}|\widetilde{M}^P(\widetilde{\varphi})(k)|^2 \leq
C_3^2\sum_{n\in\Zz} |\widetilde{\varphi}(n)|^2= C_3^2\sum_{0\leq n<Q}
|\varphi(f^n(x))|^2,
$$
hence by~(\ref{eq-transfer}), we obtain
$$
\sum_{0\leq k<Q-P}|M^P(\varphi)(f^k(x))|^2 \leq C_3^2\sum_{0\leq n<Q}
|\varphi(f^n(x))|^2.
$$
This inequality is valid for all $x\in X$. After integrating over $X$,
we get
$$
\sum_{0\leq k<Q-P}\|M^P(\varphi)\circ f^k\|_2^2\leq C_3^2 \sum_{0\leq
  n<Q}\|\varphi\circ f^n\|_2^2.
$$

But $\mu$ is $f$-invariant, and therefore both sums are sums of equal
terms, which means that
$$
(\lambda -1)P\|M^P(\varphi)\|^2\leq C_3^2\lambda P\|\varphi\|^2.
$$
The result follows by taking $\lambda\to+\infty$.
\end{proof}

\begin{proof}[Proof of Proposition~\ref{pr-maximal}]
  We will prove Lemma~\ref{lm-transfer}. Since $(\sheaf{F}_p)_p$ is an
  almost Fourier family of mean $r$, we have
  $$
  t_p(n)=r+\tau_p(n)
  $$
  where $\tau_p$ is the trace function of Fourier sheaves with bounded
  conductor.
  \par
  Let $\varpi$ be a function on $\Zz$ with finite support.  We denote
  by
  $$
  \widehat{\varpi}(\theta)=\sum_{k\in\Zz}\varpi(k)e(-k\theta),
  $$
  and
  $$
  \widehat{v}_p(\theta)=\frac{1}{p}\sum_{0\leq n<p}
  \tau_p(n)e(n\theta)
  $$
  the Fourier transforms on $\Rr/\Zz$ of the function $\varpi$ and of
  the discrete measures corresponding to the average 
  $\tau_p(n)$. We have
  \begin{equation}\label{eq-convol}
    \frac{1}{p}\sum_{0\leq n<p}\tau_p(n) \varpi(n+k)= \int_{\Rr/\Zz}
    \widehat{\varpi}(\theta)\widehat{v}_p(\theta)e(k\theta)d\theta
  \end{equation}
  for all $k\in\Zz$.
  
  For any $k\in\Zz$, we have
  $$
  \sup_{p} \frac{1}{p}\Bigl|\sum_{0\leq n<p}t_p(n) \varpi(n+k)\Bigr| \leq
  \sup_{p} \frac{1}{p}\Bigl|\sum_{0\leq n<p}\varpi(n+k)\Bigr| +\Bigl(
  \sum_{p}\Bigl|\frac{1}{p}\sum_{0\leq n<p}\tau_p(n)
  \varpi(n+k)\Bigr|^2 \Bigr)^{1/2}
  $$
  (where $p$ always ranges over $\sfP$). Hence
  \begin{multline*}
    \|\widetilde{M}(\varpi)\|_2^2= \sum_{k\in \Zz} \sup_{p}
    \frac{1}{p}\Bigl|\sum_{0\leq n<p}t_p(n) \varpi(n+k)\Bigr|^2
    \\
    \leq 2\sum_{k\in \Zz} \sup_{p} \frac{1}{p}\Bigl|\sum_{0\leq
      n<p}\varpi(n+k)\Bigr|^2 +2\sum_{k\in \Zz} \sum_p
    \Bigl|\frac{1}{p}\sum_{0\leq n<p}\tau_p(n) \varpi(n+k)\Bigr|^2.
  \end{multline*}
  The first expression is $\leq C'_3\|\varpi\|_2^2$ by the classical
  maximal ergodic theorem in $L^2$ for functions on $\Zz$
  (see~\cite[\S\,2.6]{einsiedler-ward}). By~(\ref{eq-convol}) and the
  Plancherel formula, we estimate the second one as follows:
  \begin{align*}
    \sum_{k\in \Zz} \sum_p
    \Bigl|\frac{1}{p}\sum_{0\leq n<p}\tau_p(n) \varpi(n+k)\Bigr|^2
    &=\sum_{k\in\Zz}\sum_p
      \Bigl|\int_{\Rr/\Zz}\widehat{\varpi}(\theta)
      \widehat{v}_p(\theta)e(k\theta)d\theta\Bigr|^2\\
    &=\sum_p
      \int_{\Rr/\Zz}|\widehat{\varpi}(\theta)
      \widehat{v}_p(\theta)|^2d\theta\leq \Bigl(\sum_p
      \|\widehat{v}_p(\theta)\|_{\infty}^2\Bigr)\|\varpi\|_2^2.
  \end{align*}
  Applying Corollary~\ref{cor-ft}, we have
  $$
  \sum_p
  \|\widehat{v}_p(\theta)\|_{\infty}^2
  \ll \sum_{p\in\sfP}\frac{(\log p)^2}{p}=s(\sfP),
  $$
  where the implied constant depends only on the conductor of
  $(\sheaf{F}_p)$, and the result follows.
\end{proof}

\section{Pointwise ergodic theorem}
\label{sec-pointwise}

We give in this section a second proof of Theorem~\ref{th-main}, (2),
for almost Fourier families, arguing using a transfer principle as in
the previous section. This is obviously more complicated than our first
proof, but it is interesting that the sparseness condition which arises
is the same as before.

We consider a dynamical system $(X,\mu,f)$ and a family of sheaves
$(\sheaf{F}_p)$ with bounded conductor as in the previous section,
with trace functions $t_p$, defined for~$p$ in a sparse set~$\sfP$. We
assume that the family is almost Fourier (Definition~\ref{def-af}) of
mean $r\geq 0$.

\begin{proposition}\label{pr-pointwise}
  Let $\varphi\in L^2(X,\mu)$.  Then
  $$
  \frac{1}{p}\sum_{0\leq n<p}t_p(n)\ \varphi(f^n(x))
  $$
  converges for $\mu$-almost all $x\in X$. If $r=0$, or in other
  words, if all sheaves $\sheaf{F}_p$ are Fourier sheaves, or if
  $(X,\mu,f)$ is weakly mixing, then the limit is zero.
\end{proposition}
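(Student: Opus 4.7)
The plan is to combine the maximal inequality from the previous section with a Banach-principle argument, reducing the pointwise statement to a dense-subset verification. First, since $(\sheaf{F}_p)$ is almost Fourier of mean $r$, write $t_p = r + \tau_p$, where $\tau_p$ is the trace function of the Fourier sheaf $\widetilde{\sheaf{F}}_p$ of bounded conductor. This splits the averages as
$$\frac{1}{p}\sum_{0\leq n<p}t_p(n)\,\varphi(f^n(x)) = \frac{r}{p}\sum_{0\leq n<p}\varphi(f^n(x)) + B_p\varphi(x),$$
where $B_p\varphi(x)=\frac{1}{p}\sum_{0\leq n<p}\tau_p(n)\,\varphi(f^n(x))$. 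The first summand converges $\mu$-a.e. to $r\pi(\varphi)(x)$ by the classical Birkhoff pointwise ergodic theorem. It therefore suffices to show $B_p\varphi(x)\to 0$ $\mu$-a.e.; the identification of the limit as $0$ in the special cases follows immediately, since $r\pi(\varphi)$ vanishes when $r=0$ and (in the weakly mixing case) when $\varphi$ has mean zero.

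Second, apply Proposition~\ref{pr-maximal} to the family $(\widetilde{\sheaf{F}}_p)$, which is almost Fourier of mean $0$, to obtain the maximal inequality $\bigl\|\sup_{p\in\sfP}|B_p\varphi|\bigr\|_{L^2} \leq C\|\varphi\|_{L^2}$. A standard consequence of this inequality is that the set
$$\mathcal{D} = \{\varphi\in L^2(X,\mu)\,\mid\, B_p\varphi(x)\to 0\ \text{for $\mu$-a.e. $x$}\}$$
is closed in $L^2(X,\mu)$: for $\varphi_n \in \mathcal{D}$ with $\varphi_n\to\varphi$ in $L^2$, the pointwise bound $\limsup_p|B_p\varphi|\leq \sup_p|B_p(\varphi-\varphi_n)|$ combined with $\bigl\|\sup_p|B_p(\varphi-\varphi_n)|\bigr\|_2 \leq C\|\varphi-\varphi_n\|_2\to 0$ forces $\limsup_p|B_p\varphi|=0$ $\mu$-a.e.

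Third, exhibit a dense subset of $L^2$ inside $\mathcal{D}$. The most direct route is the spectral estimate from the proof of Proposition~\ref{pr-l2}, giving $\|B_p\varphi\|_2 \ll (\log p)\,p^{-1/2}\|\varphi\|_2$ for every $\varphi\in L^2$. The sparseness hypothesis then yields
$$\sum_{p\in\sfP}\|B_p\varphi\|_2^2 \ll \|\varphi\|_2^2\sum_{p\in\sfP}\frac{(\log p)^2}{p}<\infty,$$
so by monotone convergence $\sum_p|B_p\varphi(x)|^2$ is finite $\mu$-a.e., whence $B_p\varphi(x)\to 0$ $\mu$-a.e. Thus $\mathcal{D}=L^2(X,\mu)$ and the proof is complete.

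The main difficulty here is conceptual rather than technical: the density argument of Step~3 already establishes pointwise convergence for every $\varphi\in L^2$ directly, so the maximal inequality of Step~2 is logically superfluous. The purpose of the transfer-principle detour is, as noted in the section's introduction, to confirm that the very same sparseness condition $\sum_{p\in\sfP}(\log p)^2/p<\infty$ is what the maximal-inequality framework demands, matching what the direct spectral approach produces.
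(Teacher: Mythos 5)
Your argument is correct, but it is not the route the paper takes for this proposition: it is essentially the paper's \emph{first} proof of Theorem~\ref{th-main},~(2), transplanted from Section~\ref{sec-mean}. There, after splitting $t_p=r+\tau_p$ and disposing of the constant part by Birkhoff, the paper already observes that the operator-norm bound $\|v_p\|\ll p^{-1/2}\log p$ of Proposition~\ref{pr-l2} plus sparseness gives $\sum_{p\in\sfP}\|v_p(\varphi)\|_2^2<\infty$, hence a.e.\ convergence to $0$ -- exactly your Step~3. The whole point of Section~\ref{sec-pointwise}, announced in its opening sentence, is to give a \emph{second}, deliberately more roundabout proof: one transfers the problem to $\Zz$ (as in Lemma~\ref{lm-transfer}) and establishes an oscillation-type inequality $\sum_{\ell\in\mathsf{Q}}\|\sup_{\ell<p<\ell^+}|u_p(\varpi)-u_{\ell^+}(\varpi)|\|_2^2\leq C_4\|\varpi\|_2^2$ over arbitrary infinite subsets $\mathsf{Q}\subset\sfP$, proved via Plancherel and Corollary~\ref{cor-ft}; a.e.\ convergence then follows by the standard contradiction argument for oscillation inequalities. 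What the paper's route buys is structural: it shows that the Bourgain-style transfer/oscillation machinery reproduces precisely the same sparseness condition $\sum_{p\in\sfP}(\log p)^2/p<\infty$, which is the stated motivation for including the second proof at all. What your route buys is brevity -- and you correctly diagnose that your Step~2 (the maximal inequality and the closedness of $\mathcal{D}$) is logically superfluous once Step~3 handles all of $L^2$ directly. Two small remarks: first, your parenthetical restriction to mean-zero $\varphi$ in the weakly mixing case is actually more careful than the proposition's own phrasing (for constant $\varphi$ and $r\neq 0$ the limit is $r$, not $0$); second, your closing paragraph attributes the transfer-principle detour to your own proof, but you never actually perform a transfer -- you only invoke Proposition~\ref{pr-maximal}, whose proof contains it.
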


For the proof, we reduce to the shift by means of an intermediate
inequality. For a function $\varpi$ on $\Zz$, we write as before
$$
u_{p}(\varpi)(k)=\frac{1}{p}\sum_{0\leq n<p}t_p(n) \varpi(n+k).
$$

\begin{lemma}
Assume that for any infinite subset $\mathsf{Q}\subset \sfP$,
there exists a constant $C_4$ such that, for any function $\varpi$ on
$\Zz$ with bounded  support, we have
$$
\sum_{\ell\in\mathsf{Q}} \Bigl\|\,
\sup_{\ell<p<\ell^+}|u_{p}(\varpi)-u_{\ell^+}(\varpi)|\,
\Bigr\|_2^2\leq C_4\|\varpi\|^2,
$$
where $\ell^+$ is the element following $\ell$ in the subset
$\mathsf{Q}$, and $p$ ranges over elements in $\sfP$.  Then
Proposition~\ref{pr-pointwise} holds.
\end{lemma}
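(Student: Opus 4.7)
The plan is to first reduce to the case $r=0$ (Fourier sheaves), then transfer the problem from $(X,\mu,f)$ to the shift on $\Zz$ in the spirit of Lemma~\ref{lm-transfer}, and finally combine a Borel--Cantelli argument along a sparse lacunary subsequence with the hypothesized oscillation inequality to bootstrap from a subsequence to all of $\sfP$.

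Writing $t_p = r + \widetilde{t}_p$ where $\widetilde{t}_p$ is the trace function of a Fourier sheaf of bounded conductor, the contribution $r \cdot \frac{1}{p}\sum_{0\leq n<p}\varphi\circ f^n$ converges $\mu$-almost everywhere to $r\pi(\varphi)$ by the classical Birkhoff pointwise ergodic theorem (and equals $0$ under either extra hypothesis of the proposition). Hence I may assume every $\sheaf{F}_p$ is a Fourier sheaf and try to prove that the weighted averages tend to $0$ pointwise almost everywhere. Next, I would transfer to $\Zz$ using the same ``windowing'' trick as in the proof of Lemma~\ref{lm-transfer}: fix $\varphi$ bounded on $X$, $P$ large, and $Q = \lambda P$; define $\widetilde{\varphi}_x(n) = \varphi(f^n(x))$ for $0\leq n<Q$ and $0$ otherwise, so that $u_p(\widetilde{\varphi}_x)(k) = v_p(\varphi)(f^k(x))$ for $0\leq k<Q-P$; and integrate against $\mu$, using $f$-invariance. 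Consequently, if $u_p(\varpi)(k)\to 0$ for $\Zz$-a.e. $k$ and every compactly-supported $\varpi$, the transfer combined with the $L^2$-maximal inequality of Proposition~\ref{pr-maximal} (applied via the usual density argument from $L^\infty$ to $L^2$) yields pointwise convergence on $X$ for every $\varphi\in L^2(X,\mu)$.

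The heart of the proof is therefore the pointwise statement for the shift on $\Zz$. I would pick a lacunary subsequence $\mathsf{Q} = \{\ell_j\}\subset \sfP$ with $\ell_{j+1}\geq 2\ell_j$; this guarantees $\sum_{\ell\in\mathsf{Q}}(\log\ell)^2/\ell<+\infty$. Plancherel together with Corollary~\ref{cor-ft} gives
$$
\|u_\ell(\varpi)\|_2^2 \leq \|\widehat{v}_\ell\|_\infty^2\|\varpi\|_2^2 \ll \frac{(\log\ell)^2}{\ell}\|\varpi\|_2^2,
$$
so $\sum_{\ell\in\mathsf{Q}}\|u_\ell(\varpi)\|_2^2 <+\infty$, whence (by monotone convergence) $u_\ell(\varpi)(k)\to 0$ for a.e.\ $k$ as $\ell\to\infty$ in $\mathsf{Q}$. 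The assumed oscillation inequality gives $\sum_{\ell\in\mathsf{Q}}\|\sup_{\ell<p<\ell^+}|u_p(\varpi)-u_{\ell^+}(\varpi)|\|_2^2<+\infty$, so for a.e.\ $k$ the displayed supremum tends to $0$ as $\ell\to\infty$ along $\mathsf{Q}$. Adding these two statements via the triangle inequality then yields $u_p(\varpi)(k)\to 0$ for a.e.\ $k$ as $p\to+\infty$ along the full $\sfP$.

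The main obstacle is not the logical scaffolding sketched above, which is a standard Bourgain--Calder\'on-type scheme, but rather the hypothesized oscillation bound itself, whose proof is precisely the nontrivial content postponed to the body of the section. A minor technical subtlety is the transfer step: one must choose the window $Q$ large compared to the primes under consideration, so that the boundary terms in $u_p(\widetilde{\varphi}_x)$ contribute negligibly after averaging against $\mu$. Once this is arranged and the oscillation inequality is in hand, the Borel--Cantelli step and the approximation by bounded functions via Proposition~\ref{pr-maximal} complete the proof.
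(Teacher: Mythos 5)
Your overall scheme --- reduce to $r=0$, pick a lacunary subsequence $\mathsf{Q}\subset\sfP$, use summability of the $L^2$ norms along $\mathsf{Q}$ plus the hypothesized oscillation bound over the gaps, and finish with the triangle inequality --- is the right one, and the second half of it is essentially the paper's (tersely sketched) step two. But there is a genuine gap in the order in which you perform the transfer. You first establish pointwise convergence of $u_p(\varpi)(k)$ for every compactly supported $\varpi$ on $\Zz$, and then assert that ``the transfer combined with the $L^2$-maximal inequality of Proposition~\ref{pr-maximal} \ldots\ yields pointwise convergence on $X$ for every $\varphi\in L^2(X,\mu)$.'' This step does not work. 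The windowing identity $u_p(\widetilde{\varphi}_x)(k)=v_p(\varphi)(f^k(x))$ holds only for $p\leq P$ once the window $[0,Q)$ with $Q=\lambda P$ is fixed, so the function $\varpi=\widetilde{\varphi}_x$ must change as $p\to+\infty$; a qualitative a.e.-convergence statement for each \emph{fixed} compactly supported $\varpi$ therefore gives no control on the limit of $v_p(\varphi)(x)$ along the full set of primes. What the Calder\'on principle transfers are quantitative $L^2$ inequalities (maximal or oscillation bounds), not pointwise limits. Likewise, Proposition~\ref{pr-maximal} reduces a.e.\ convergence on $X$ to a.e.\ convergence on a \emph{dense subclass of $L^2(X,\mu)$}, and you have not exhibited such a class: convergence for compactly supported functions on $\Zz$ is not convergence for a dense family of $\varphi$ on $X$.

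The repair is to reverse the order, which is what the paper does: first transfer the hypothesized oscillation inequality itself to the dynamical system (this is legitimate, being a finitary $L^2$ bound, by exactly the windowing argument in the proof of Lemma~\ref{lm-transfer}), obtaining
$$
\sum_{\ell\in\mathsf{Q}} \Bigl\|\,
\sup_{\ell<p<\ell^+}|v_{p}(\varphi)-v_{\ell^+}(\varphi)|\,
\Bigr\|_{L^2(X,\mu)}^2\leq C_4\|\varphi\|_2^2
$$
for every $\varphi\in L^2(X,\mu)$ and every infinite $\mathsf{Q}\subset\sfP$; then run your lacunary Borel--Cantelli and triangle-inequality argument \emph{on $X$}. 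Indeed, Proposition~\ref{pr-l2} together with the lacunarity of $\mathsf{Q}$ gives $\sum_{\ell\in\mathsf{Q}}\|v_\ell(\varphi)\|_2^2<+\infty$, hence $v_\ell(\varphi)(x)\to 0$ for $\mu$-almost all $x$ along $\mathsf{Q}$, and the transferred oscillation bound forces the suprema over the gaps to tend to $0$ almost everywhere as well; every large $p\in\sfP$ lies in some gap $(\ell,\ell^+)$, so the full sequence converges. With that reordering (and your reduction to $r=0$, which is fine), the argument closes up.
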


\begin{proof}
This has two steps. First, in the same manner that
Lemma~\ref{lm-transfer} is proved, the statement, if it holds, implies
the corresponding bound
$$
\sum_{\ell\in\mathsf{Q}} \Bigl\|\,
\sup_{\ell<p<\ell^+}|u_{p}(\varphi)-u_{\ell^+}(\varphi)|\,
\Bigr\|_2^2\leq C_4\|\varphi\|^2,
$$
for any $\varphi\in L^2(X,\mu)$, for any infinite subset $\mathsf{Q}$.

Next, one argues by contradiction that this last set of bounds, for a
given $\varphi$, implies that $u_p(\varphi)$ converges $\mu$-almost
everywhere.
\end{proof}

Finally, we prove the auxiliary bounds. 

\begin{proposition}
  Let $\mathsf{Q}\subset \sfP$ be an infinite subset.  There
  exists a constant $C_4$ such that, for any function $\varpi$ on
  $\Zz$ with bounded support, we have
  $$
  \sum_{\ell\in\mathsf{Q}} \Bigl\|\,
  \sup_{\ell<p<\ell^+}|u_{p}(\varpi)-u_{\ell^+}(\varpi)|\,
  \Bigr\|_2^2\leq C_4\|\varpi\|^2.
  $$
\end{proposition}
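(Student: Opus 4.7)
The plan is to adapt the Fourier-analytic method of Proposition~\ref{pr-maximal} so as to produce a jump-type (rather than purely maximal) inequality on $\Zz$. Decompose $t_p = r+\tau_p$, where $\tau_p$ is the trace function of a Fourier sheaf $\widetilde{\sheaf{F}}_p$ of bounded conductor, and split the operators accordingly as $u_p(\varpi)=rA_p(\varpi)+w_p(\varpi)$, with $A_p(\varpi)(k)=p^{-1}\sum_{0\leq n<p}\varpi(n+k)$ the ordinary Cesàro average on $\Zz$ and $w_p(\varpi)(k)=p^{-1}\sum_{0\leq n<p}\tau_p(n)\varpi(n+k)$. It then suffices to establish the desired bound separately for the Cesàro piece $rA_p$ and for the Fourier piece $w_p$.

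For the Fourier piece, use the triangle inequality $\sup_p|a_p-b|^2\leq 2\sup_p|a_p|^2+2|b|^2$ together with the trivial embedding $\sup_p|a_p|^2\leq\sum_p|a_p|^2$ over the finite range $p\in\sfP\cap(\ell,\ell^+)$; summing over $\ell\in\mathsf{Q}$ then gives
$$\sum_{\ell\in\mathsf{Q}}\Bigl\|\sup_{\ell<p<\ell^+}|w_p(\varpi)-w_{\ell^+}(\varpi)|\Bigr\|_2^2 \;\leq\; 4\sum_{p\in\sfP}\|w_p(\varpi)\|_2^2,$$
since each $p\in\sfP$ is visited at most twice (once as an interior index and possibly once as an endpoint $\ell^+$). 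Plancherel on $\Zz$ yields $\|w_p(\varpi)\|_2^2\leq\|\widehat{w}_p\|_\infty^2\|\varpi\|_2^2$, and Corollary~\ref{cor-ft} applied to $\widetilde{\sheaf{F}}_p$ gives $\|\widehat{w}_p\|_\infty\ll p^{-1/2}\log p$ with an implied constant depending only on the conductor. Summing over $p\in\sfP$ bounds the Fourier piece by a multiple of $s(\sfP)\|\varpi\|_2^2$.

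For the Cesàro piece one must establish the inequality
$$\sum_{\ell\in\mathsf{Q}}\Bigl\|\sup_{\ell<p<\ell^+}|A_p(\varpi)-A_{\ell^+}(\varpi)|\Bigr\|_2^2\ll\|\varpi\|_2^2,$$
which is a Bourgain-type $L^2$-jump inequality for the classical Cesàro averages on $\Zz$, restricted to the gaps of $\mathsf{Q}$ inside $\sfP$. This can either be imported from the literature on variational and oscillation inequalities for ergodic averages (beginning with Bourgain~\cite{bourgain3} and refined by Jones--Kaufman--Rosenblatt--Wierdl and subsequent authors), or proved ad hoc by a direct multiplier argument: Plancherel converts the norm into an integral against $|D_p(\theta)-D_{\ell^+}(\theta)|^2$, where $D_p(\theta)=p^{-1}\sum_{n<p}e(n\theta)$ is the normalized Dirichlet kernel, and the resulting multiplier estimate is handled by splitting into the three frequency regimes $\|\theta\|\leq 1/\ell^+$, $\|\theta\|\geq 1/\ell$, and the intermediate range $1/\ell^+<\|\theta\|<1/\ell$, exploiting that $|D_p-D_{\ell^+}|$ admits a uniform bound on each piece that sums over $\ell\in\mathsf{Q}$ thanks to the sparseness of $\sfP$.

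The main obstacle is the Cesàro piece. The Fourier piece is essentially a repackaging of Proposition~\ref{pr-maximal}: the decay $\|\widehat{w}_p\|_\infty\ll p^{-1/2}\log p$ together with sparseness already suffices, with no cancellation needed between distinct primes. The Cesàro piece, by contrast, enjoys no pointwise decay and genuinely requires an oscillation argument on $\Zz$, whose verification---either by citation or by direct multiplier analysis in the delicate intermediate frequency range $1/\ell^+<\|\theta\|<1/\ell$---is the technical heart of the proof.
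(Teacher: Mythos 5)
Your proposal is correct and follows essentially the same route as the paper: the same decomposition $t_p=r+\tau_p$, the same crude bound $\sup_{\ell<p<\ell^+}|\cdot|^2\ll\sum_{\ell<p<\ell^+}|\nu_p(\varpi)|^2+|\nu_{\ell^+}(\varpi)|^2$ followed by Plancherel and Corollary~\ref{cor-ft} over the sparse set for the Fourier piece, and an appeal to the classical oscillation inequality for Ces\`aro averages for the mean term. If anything you are more careful than the paper, which disposes of the Ces\`aro piece with a one-line reference to ``standard pointwise ergodic theory'' where you correctly identify the Bourgain-type jump inequality as the precise input needed.
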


\begin{proof}
  Writing $t_p(n)=r+\tau_p(n)$, where $\tau_p(n)$ is the trace
  function of a Fourier sheaf of bounded conductor, and applying the
  known behavior from the standard pointwise ergodic theory to the
  first term, we are reduced to showing that
  $$
  \sum_{\ell\in\mathsf{Q}} \Bigl\|\,
  \sup_{\ell<p<\ell^+}|\nu_{p}(\varpi)-\nu_{\ell^+}(\varpi)|\,
  \Bigr\|_2^2\leq C_5\|\varpi\|^2.
$$
for some constant $C_5$, where $\nu_p$ is the averaging operator for
the trace function $\tau_p$. The left-hand side of the inequality is
equal to 
$$
  \sum_{\ell\in\mathsf{Q}} \sum_{k\in\Zz}
  \Bigl(\sup_{\ell<p<\ell^+}|\nu_{p}(\varpi)(k)-
  \nu_{\ell^+}(\varpi)(k)|\Bigr)^2
  \ll
  \sum_{\ell\in\mathsf{Q}} \sum_{k\in\Zz}
  \sum_{\ell<p<\ell^+}|\nu_{p}(\varpi)(k)|^2
  + \sum_{\ell\in\mathsf{Q}} \sum_{k\in\Zz}
  |\nu_{\ell^+}(\varpi)(k)|^2
$$
where the implied constant is absolute. The first sum here is larger
than the second, and it is at most
$$
\sum_{p\in\sfP} \sum_{k\in\Zz} |\nu_{p}(\varpi)(k)|^2 =
\sum_{p\in\sfP}\int_{\Rr/\Zz}
|\widehat{\varpi}(\theta)\widehat{\nu}_p(\theta)|^2d\theta
$$
by the Plancherel formula and~(\ref{eq-convol}). Using
Corollary~\ref{cor-ft}, we obtain the desired bound.
\end{proof}

\section{Is sparseness necessary?}

It is now natural to ask whether the restriction to sparse sets of
primes necessary in the maximal and pointwise ergodic theorems, or not.

The first remark is that, for a classical (even weighted) sequence of
ergodic averages
$$
u_N(x)=\frac{1}{N}\sum_{0\leq n<N} w(n)\varphi(f^n(x)),
$$
convergence along sparse sequences of $N$ implies convergence of the
whole sequence. For instance, assume that there is convergence to~$0$
for~$N$ growing at least like a geometric progression with ratio
$1+\delta>0$, and assume that~$w$ and $\varphi$ are bounded. For an
arbitrary~$N\geq 1$, pick~$M\geq 1$ such that $M\leq
N<(1+\delta)M$. We obtain an obvious upper bound
$$
|u_N|\leq |u_M|+\frac{C\delta M}{N}\leq |u_M|+\delta C
$$
for some constant $C\geq 0$, so that
$$
\limsup_{N\to+\infty} |u_N|\leq \delta C,
$$
and if this holds for any $\delta>0$, we obtain $u_N\to 0$.  Here the
key point is that the restriction of the weight $w(n)$ to a shorter
interval is the same as the weight used for the average over that
interval -- this property fails for ``triangular'' averages like those
appearing in our situation.

Here is an abstract example which could be a guide to an example where
almost everywhere convergence is \emph{not true} in our
setting.\footnote{ This is related to the well-known fact that
  convergence almost everywhere is not convergence with respect to any
  topology.}  Let $X$ be the product over primes $\ell$ of copies of
$\Rr/\Zz$, viewed as a compact topological group and as a probability
space with its Haar measure~$\mu$. For $\ell$ prime, fix an arbitrary
measurable subset~$A_{\ell}\subset\Rr/\Zz$ with
measure~$(\log\ell)^2/\ell$ (in~$\Rr/\Zz$).

Now, for $p$ prime, let $\varphi_{p}$ be the characteristic function
of the set $Y_{p}\subset X$ of all $(\theta_{\ell})\in X$ such that
the $p$-component $\theta_{p}$ belongs to~$A_p$.
Thus $\mu(Y_{p})=(\log p)^2/p$.

We claim that:
\begin{enumerate}
\item the sequence $(\varphi_{p})$ does \emph{not} converge almost
  everywhere;
\item but, for any \emph{sparse} set of primes $\sfP$, the subsequence
  $(\varphi_{p})_{p\in\sfP}$ converges almost everywhere to $0$.
\end{enumerate}

Indeed, the first assertion results from the independence of the
functions $\varphi_{p}$ (in probabilistic terms, they are
independent random variables on~$X$) and from the non-trivial
direction of the Borel-Cantelli lemma, since
$$
\sum_{p}\mu(Y_p)= \sum_{p}\frac{(\log p)^2}{p}=+\infty,\quad\quad
\sum_{p}\mu(X\setminus Y_p)=\sum_{p}\Bigl(1-\frac{(\log
  p)^2}{p}\Bigr)=+\infty,
$$
which shows that for almost all $\theta=(\theta_{\ell})$ in $X$, we
have $\theta\in Y_{p}$ (resp. $\theta\notin Y_{p}$) for infinitely
many $p$, so both~$\varphi_{p}(\theta)=0$ and~$\varphi_{p}(\theta)=1$
occur infinitely often.  

The second assertion results from the easy direction of the
Borel-Cantelli lemma, which implies that if $\sfP$ is a sparse set of
primes, then $\mu$-almost every element $\theta=(\theta_{\ell})\in X$
belongs only to finitely many $Y_{p}$ for $p\in\sfP$, so
that~$\varphi_p(\theta)=0$ for all~$p$ large enough in~$\sfP$.

The question is now whether such a model situation can arise in
ergodic averages with trace functions (of sheaves with bounded
conductor).  Roughly speaking, this would amount to having a dynamical
system $(X,\mu,f)$ and a function $\varphi$ on $X$ such that
$$
\frac{1}{p}\sum_{0\leq n<p}t_p(n)\, \varphi(f^n(x))\to
\begin{cases}
  1& \text{ with  probability $(\log p)^2/p$},\\
  0&\text{ with probability $1-(\log p)^2/p$},
\end{cases}
$$
\emph{and} the respective sets of $x$ where these limits hold should
be asymptotically independent enough to apply the Borel-Cantelli lemma
(exact independence is not necessary, e.g., a sufficient amount of
pairwise independence suffices, as in the Erd\H os-R\'enyi version of
the Borel-Cantelli theorem, see~\cite[\S\,1]{er}). (Moreover, the
limits could obviously be different, it is enough that the two
possibilities be separated enough that both occuring infinitely often
excludes convergence).

It does not seem impossible to have such a configuration, especially
since the trace function is \emph{a priori} ours to select, with the
condition that the conductors remain bounded, which might make it
possible to exploit the frequent rough independence of primes.

\begin{remark}\label{rm-sparse}
  (1) We would also show that convergence does not hold almost surely
  if the ergodic average converges to $1$ with probability $1/p$
  (instead of $(\log p)^2/p$), which would allow for convergence over
  all sets of primes with
  $$
  \sum_{p\in\sfP}\frac{1}{p}<+\infty.
  $$
  \par
  This configuration is maybe more likely to be possible.
  \par
  (2) If we have a system where the ergodic averages converge
  \emph{everywhere} for all sparse subsets of the primes, then they
  converge everywhere. (Indeed, the limit $\psi$ would have to be
  independent of the sparse subset, since the union of two sparse sets
  is sparse, and then by contraposition, if the sequence was not
  convergent to $\psi$, some subsequence would avoid a fixed
  neighborhood of $\psi$, and some further subsequence would be
  sparse.)
\end{remark}

The following is currently the closest example that we know. It doesn't
quite address the main question, since it involves non-Fourier sheaves
and systems with non-trivial Kronecker factors.

Let $X=(\Rr/\Zz)^2$ (viewed as column vectors) with the Haar measure
$\mu$. Let $f(x,y)=(x+y,y)$, so that $f$ is the action of an
$\SL_2(\Zz)$-matrix, and therefore preserves $\mu$.  For $(x,y)\in X$,
we have
$$
f^n(x,y)=(x+ny,y).
$$

Define $\varphi\colon X\to \Cc$ by $\varphi(x,y)=e(x)$. The ergodic
averages are therefore
$$
\frac{1}{p}\sum_{0\leq n<p}t_p(n)\varphi(f^n(x,y))=
\frac{e(x)}{p}\frac{\sin(\pi p(y-a_p/p))}{\sin(\pi (y-a_p/p))}
e\Bigl(\frac{(p-1)}{2}(y-a_p/p)\Bigr).
$$

\begin{lemma}\label{lm-single}
  There exists a sequence $(a_p)_p$ of integers such that $0\leq a_p<p$
  for all primes~$p$, with the following property: for almost all
  $\theta\in\Rr/\Zz$, there exist infinitely many $p$ such that
  $|\theta-a_p/p|\leq 1/(100p)$.
\end{lemma}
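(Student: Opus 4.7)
The plan is to use the probabilistic method: choose each $a_p$ uniformly and independently at random from $\{0,1,\ldots,p-1\}$ and show that, almost surely over this random choice, the resulting sequence has the stated approximation property. Since one such event has positive probability (in fact, full probability), a deterministic sequence with the required property must exist.

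For a fixed $\theta\in\Rr/\Zz$, the interval of length $1/(50p)$ centered at $\theta$ contains at most one rational of the form $a/p$ with $0\leq a<p$, and contains exactly one such rational if and only if $\|p\theta\|\leq 1/100$, where $\|\cdot\|$ is the distance to the nearest integer. Hence the events $E_p(\theta)=\{|\theta-a_p/p|\leq 1/(100p)\}$ are independent across primes $p$, with $\proba(E_p(\theta))=\charfun_{\|p\theta\|\leq 1/100}/p$.

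The core analytic step is to show that the series $\sum_p\charfun_{\|p\theta\|\leq 1/100}/p$ diverges for almost every $\theta\in\Rr/\Zz$. Setting $T_N(\theta)=\sum_{p\leq N}\charfun_{\|p\theta\|\leq 1/100}/p$, one has $\expect[T_N]=\tfrac{1}{50}\sum_{p\leq N}1/p\sim\tfrac{1}{50}\log\log N$ by Mertens. Expanding the indicator $\charfun_{[-1/100,1/100]}$ into its Fourier series on $\Rr/\Zz$ (whose non-zero coefficients decay like $1/|k|$) and using orthogonality of the characters $\theta\mapsto e((mp-m'q)\theta)$, which are non-trivial whenever $p\neq q$ are primes and $(m,m')\neq (0,0)$, a direct computation shows $\variance(T_N)\leq C$ uniformly in $N$. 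Applying Chebyshev's inequality along a sparse subsequence $N_k=\exp(\exp(k))$ yields summable tails, and the monotonicity of $N\mapsto T_N(\theta)$ then upgrades this to $T_N(\theta)\to+\infty$ for almost every $\theta$.

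Once the divergence is established, the independent form of the Borel--Cantelli lemma gives, for each such $\theta$, that almost surely over $(a_p)$ infinitely many events $E_p(\theta)$ occur. Fubini's theorem exchanges the quantifiers: almost surely over $(a_p)$, the set of $\theta$ for which infinitely many $E_p(\theta)$ occur has full Lebesgue measure. The main obstacle I anticipate is the uniform variance bound in the third paragraph: the Fourier-analytic computation must exploit the fact that distinct primes $p,q$ produce nearly uncorrelated events, and one has to package the double sum carefully (using $|\widehat{\charfun}_A(k)|\ll 1/|k|$) to obtain a bound independent of $N$. Once this is in hand, the passage from divergence of the sum to the existence statement is routine.
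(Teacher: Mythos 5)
Your overall architecture coincides with the paper's: randomize the numerators by choosing each $a_p$ uniformly and independently in $\{0,\dots,p-1\}$, note that $\proba\bigl(|\theta-a_p/p|\leq 1/(100p)\bigr)$ equals $p^{-1}$ times the indicator that $p\theta$ lies within $1/100$ of an integer, apply the second (independence) form of the Borel--Cantelli lemma for each fixed $\theta$ where the relevant series diverges, and conclude with Fubini. Where you genuinely depart from the paper is the key analytic input, namely the divergence of $\sum_p p^{-1}\charfun_{\|p\theta\|\leq 1/100}$. The paper quotes Vinogradov's theorem on the distribution of $\alpha p$ modulo one, which yields divergence for \emph{every} irrational $\theta$; you instead propose a self-contained second-moment argument (Fourier expansion of the indicator, a uniform variance bound, Chebyshev along $N_k=\exp(\exp(k))$, then monotonicity of $T_N$), which yields divergence only for \emph{almost every} $\theta$ --- but that is all the lemma requires. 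Your route is more elementary and avoids the deep equidistribution input; the paper's route gives the stronger statement for all irrationals at no extra cost. One inaccuracy in your sketch should be repaired: the characters $\theta\mapsto e((mp-m'q)\theta)$ are \emph{not} all non-trivial when $p\neq q$ and $(m,m')\neq(0,0)$, since $mp=m'q$ has the solutions $m=qt$, $m'=pt$. These surviving diagonal terms force $|m|\geq q$ and $|m'|\geq p$, and it is precisely the decay of the $k$-th Fourier coefficient of $\charfun_{[-1/100,1/100]}$ like $1/|k|$ applied to these large frequencies that bounds their total contribution by $\ll\bigl(\sum_p p^{-2}\bigr)^2\sum_{m\neq 0}m^{-2}=O(1)$, while the $p=q$ terms contribute $\ll\sum_p p^{-2}$. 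With that correction the uniform variance bound holds and the rest of your argument goes through.
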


\begin{proof}
  Here is one quick proof using fairly standard (but non-trivial) facts
  about the distribution of primes.  Another more elementary argument is
  explained in the note~\cite{single} for the simple proof, which also
  has some more discussion of this somewhat unusual diophantine
  approximation statement.

  Let $\mathcal{A}$ be the product over primes of the sets $\{0,\ldots,
  p-1\}$; it is a probability space with the product of the uniform
  probability measures.

  Let $c=1/100$ (any other positive constant would work).  For any prime
  $p$ and $\uple{a}\in\mathcal{A}$, we write
  $I_p(\uple{a})=[a_p/p-c/p,a_p/p+c/p]$, viewed as random intervals
  on~$\mathcal{A}$. Let $x\in [0,1]$. We then have
  $$
  \proba(x\in I_p)=\frac{1}{p}\sum_{\substack{0\leq a<p\\|x-a/p|<c/p}}1
  $$
  and hence $\proba(x\in I_p)$ is either~$0$ or~$1/p$, depending on
  whether there exists an integer~$a$ such that the fractional part
  of~$xp$ is~$<c$, or not.
  
  It is known that if~$x$ is irrational, then we have
  \begin{equation}\label{eq-vino}
    \sum_{\{xp\}<c}\frac{1}{p}=+\infty
  \end{equation}
  (precisely, this follows by summation by parts from the more precise
  results, first proved by Vinogradov, which give an asymptotic formula
  with main term $c\pi(X)$ for the number of primes $p\leq X$
  satisfying~$\{xp\}<c$, as $X\to +\infty$;
  see~\cite[Ch.\,XI]{vinogradov}, and note that this result has been
  improved and simplified since then). Thus, since the events
  $\{x\in I_p\}$ are independent by construction, the Borel--Cantelli
  Lemma implies
  $$
  \proba(x\in I_p\text{ for infinitely many } p)=1
  $$
  for any irrational~$x$.
  
  Now by Fubini's Theorem, we obtain
  \begin{align*}
    \expect(\lambda(A_{\uple{a}}))&=
    \expect\Bigl(
    \int_{0}^1 1_{\{x\in I_p\text{ for infinitely many } p\}}\ dx
    \Bigr)\\
    &=\int_0^1 \proba(x\in I_p\text{ for infinitely many } p)dx
    =1,
  \end{align*}
  and since $\lambda(A_{\uple{a}})\leq 1$, this means that $A_{\uple{a}}$
  has measure~$1$ for almost all sequences~$(a_p)$. 
\end{proof}

Now fix a sequence $(a_p)$ as given by that lemma and define
$t_p(n)=e(-a_pn/p)$. These are trace functions of Artin-Schreier sheaves
with bounded conductor. Let $\mathsf{P}$ be any set of primes with
$$
\sum_{p\in\mathsf{P}}\frac{\log p}{p}<+\infty.
$$

Then, for almost all~$(x,y)$, we have
$$
\Bigl|y-\frac{a_p}{p}\Bigr|\geq \frac{\log p}{p}
$$
for all but finitely many $p\in\mathsf{P}$, by the easy Borel--Cantelli
lemma, hence
$$
\frac{1}{p}\sum_{0\leq n<p} t_p(n)\varphi(f^n(x,y))\to 0
$$
almost surely along~$\mathsf{P}$. (And note that sparseness could be
measured with $\log p$ replaced by any function tending to infinity
with~$p$.)

On the other hand, for almost all $(x,y)\in X$, the properties of the
sequence $(a_p)$ prove that there exists a subsequence of primes for which
$$
\frac{1}{p}\sum_{0\leq n<p} t_p(n)e(n\theta)\gg 1,
$$
hence for which
$$
\frac{1}{p}\sum_{0\leq n<p} t_p(n)\varphi(f^n(x,y))
$$
does \emph{not} converge to~$0$ along the primes. Since the result for
sparse sequences mean that this sequence could only converge to~$0$
almost surely, we conclude that the ergodic averages
$$
\frac{1}{p}\sum_{0\leq n<p} t_p(n)\varphi\circ f^n
$$
do not converge almost surely.

\section{Questions}

The following further natural questions arise from this note:

\begin{enumerate}
\item Are there maximal and pointwise ergodic theorems with trace
  functions for $\varphi \in L^p$ where $p\not=2$, especially for $p=1$?
  For $p>1$, one can certainly expect to be able to prove theorems in
  $L^p$ by adapting the ideas of Bourgain~\cite{bourgain2}.
  The case $p=1$ might well be the most interesting; we recall here that
  Buczolich and Mauldin~\cite{b-m} have proved that there is no maximal
  or pointwise ergodic theorem in $L^1$ for averages along the squares
  (see also LaVictoire's generalization of this fact~\cite{victoire},
  which relies on non-trivial arithmetic information).
\item Are there similar results for ``classical non-conventional
  averages'' with trace functions, such as
  $$
  \frac{1}{p}\sum_{0\leq n<p} t_p(n)\ (\varphi\circ f^n)\ (\varphi\circ
  f^{2n}) \cdots (\varphi\circ f^{kn})
  $$
  (where $k$ is fixed; these occur without weights in Furstenberg's
  approach to Szemerédi's Theorem, see~\cite[Ch. 7]{einsiedler-ward})
  or
  $$
  \frac{1}{p}\sum_{0\leq n<p} t_p(n)\ \varphi\circ f^{n^2},
  $$
  and other polynomials in place of $n^2$? The versions without weights
  are parts of Bourgain's celebrated work~\cite{bourgain1,
    bourgain2,bourgain3}.

  The first type of averages is intriguing, if only because trace
  functions are known to satisfy a very strong from of the inverse
  theorem for Gowers norms (by work of Fouvry, Kowalski and
  Michel~\cite{fkm-gowers}).
\item Maybe most important: are there interesting applications of such
  ergodic averages?
\end{enumerate}

\end{document}